\def\arXiv#1{arXiv:\href{http://arXiv.org/abs/#1}{#1}}
\newtheorem{theorem}{Theorem}[section]
\newtheorem{proposition}[theorem]{Proposition}
\newtheorem{lemma}[theorem]{Lemma}
\newtheorem{corollary}[theorem]{Corollary}
\theoremstyle{definition}
\numberwithin{figure}{section}
\numberwithin{equation}{section}
\numberwithin{table}{section}
\newcommand{\Z}{\mathbb{Z}}
\newcommand{\R}{\mathbb{R}}
\newcommand{\C}{\mathbb{C}}
\newcommand{\F}{\mathbb{F}}
\newcommand{\PGL}{\operatorname{PGL}}
\newcommand{\legendre}[2]{\genfrac{(}{)}{}{}{#1}{#2}}
\title{Non-planarity of Markoff graphs mod $p$}
\author{Matthew de Courcy-Ireland}
\address{Institute of Mathematics\\
EPFL\\
CH-1015 Lausanne, Switzerland} \email{matthew.decourcy-ireland@epfl.ch}
\date{August 23, 2022}
\begin{document}

\begin{abstract}
We prove the non-planarity of a family of 3-regular graphs constructed from the solutions to the Markoff equation $x^2+y^2+z^2=xyz$ modulo prime numbers greater than 7. The proof uses Euler characteristic and an enumeration of the short cycles in these graphs. Non-planarity for large primes would follow assuming a spectral gap, which was the original motivation. For primes congruent to 1 modulo 4, or congruent to 1, 2, or 4 modulo 7, explicit constructions give an alternate proof of non-planarity. 
\end{abstract}

\maketitle

\section{Introduction} \label{sec:introduction}

For each prime number $p$, we consider a graph whose vertices are triples
in $\F_p^3$, with edges connecting a vertex $(x,y,z)$ to
\begin{align*}
m_1(x,y,z) &= (yz-x,y,z) \\
m_2(x,y,z) &= (x,xz-y,z) \\
m_3(x,y,z) &= (x,y,xy-z)
\end{align*}
The operations $m_1, m_2, m_3$ preserve the polynomial $x^2+y^2+z^2 - xyz$. 
Thus the graph is a disjoint union of subgraphs corresponding to solutions of a Markoff-type equation
\[
x^2 + y^2 + z^2 = xyz + k
\]
with $k \in \F_p$. An especially interesting case is $k=0$, which Markoff investigated (over $\Z$ rather than $\F_p$) and found to be related to quadratic forms and Diophantine approximation \cite{M}.
By ``the Markoff graph mod $p$", we mean the graph with vertices $(x,y,z) \neq (0,0,0)$ satisfying $x^2+y^2 + z^2 = xyz$ in $\F_p$, and edges given by $m_1, m_2, m_3$ as above.
For example, Figure~\ref{fig:mod7} shows the Markoff graph mod 7. 

\begin{theorem} \label{thm:main}
The Markoff graph mod $p$ is planar if and only if the prime $p$ is $2$, $3$, or $7$.
\end{theorem}

In other words, for $p \neq 2, 3, 7$, these graphs cannot be drawn in the plane without some edges crossing.
This is an indirect test of the hypothesis that the Markoff graphs form an expander family as $p \rightarrow \infty$. Indeed, by the planar separator theorem of Lipton and Tarjan \cite{LT}, expansion is impossible in planar graphs.
As a proof of expansion in the Markoff family remains elusive, we became interested in finding a direct proof that they are not planar.
We recall this connection in Section~\ref{sec:lt}.
We refer to \cite{KS} for more on the spectral properties of planar graphs.

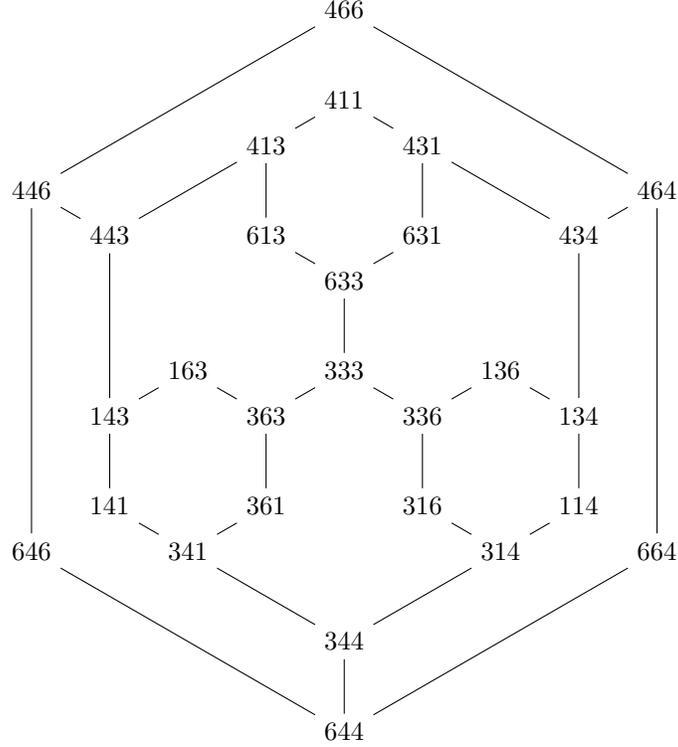
\begin{figure} 
\begin{tikzpicture}[scale=1.2]
\pgfmathsetmacro{\h}{0.866025}
\draw (0,0) node(333){333};
\draw (0,1) node(633){633};
\draw (\h,-1/2) node(336){336};
\draw (-\h,-1/2) node(363){363};
\draw (\h,3/2) node(631){631};
\draw (-\h,3/2) node(613){613};
\draw (\h,5/2) node(431){431};
\draw (-\h,5/2) node(413){413};
\draw (0,3) node(411){411};
\draw (-2*\h,0) node(163){163};
\draw (2*\h,0) node(136){136};
\draw (-\h,-3/2) node(361){361};
\draw (\h,-3/2) node(316){316};
\draw (3*\h,-1/2) node(134){134};
\draw (-3*\h,-1/2) node(143){143};
\draw (3*\h,-3/2) node(114){114};
\draw (-3*\h,-3/2) node(141){141};
\draw (-2*\h,-2) node(341){341};
\draw (2*\h,-2) node(314){314};
\draw (3*\h,3/2) node(434){434};
\draw (-3*\h,3/2) node(443){443};
\draw (4*\h,-2) node(664){664};
\draw (0,-3) node(344){344};
\draw (0,-4) node(644){644};
\draw (-4*\h,2) node(446){446};
\draw (4*\h,2) node(464){464};
\draw (0,4) node(466){466};
\draw (-4*\h,-2) node(646){646};
\draw (333)--(633);
\draw (333)--(363);
\draw (333)--(336);
\draw (466)--(446)--(646)--(644)--(664)--(464)--(466);
\draw (411)--(413)--(613)--(633)--(631)--(431)--(411);
\draw (141)--(341)--(361)--(363)--(163)--(143)--(141);
\draw (114)--(134)--(136)--(336)--(316)--(314)--(114);
\draw (143)--(443)--(413);
\draw (431)--(434)--(134);
\draw (341)--(344)--(314);
\draw (443)--(446);
\draw (464)--(434);
\draw (644)--(344);
\end{tikzpicture}
\caption{The Markoff graph mod 7 is planar. The vertices are the 28 solutions to $x^2+y^2+z^2=xyz \bmod 7$, excluding $(0,0,0)$. The labels abbreviate $(x,y,z)$ by $xyz$, with edges corresponding to the moves $x \mapsto yz - x$, $y \mapsto xz-y$, and $z \mapsto xy-z$. To obtain a 3-regular graph, small loops can be drawn at the vertices of degree 2 without crossing any other edges. These vertices are fixed by a move on one of the coordinates, as for instance $(1,6,3)$ is fixed by changing 3 to $1 \times 6 - 3 = 3$.
}
\label{fig:mod7}
\end{figure}

The intuition behind the proof is that a planar graph cannot have too many edges. The following folklore lemma can be shown using Euler characteristic, as we review in Section~\ref{sec:euler}.
\begin{lemma} \label{lem:planar}
If a planar connected graph has $V$ vertices, $E$ edges, and no cycles of length less than $g$, then
\begin{equation} \label{eqn:planar-ev}
E \leq \frac{g}{g-2}(V-2)
\end{equation}
\end{lemma}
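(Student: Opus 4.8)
The plan is to prove the bound \eqref{eqn:planar-ev} via the Euler characteristic formula $V - E + F = 2$ for a connected planar graph drawn in the plane, where $F$ counts the faces including the unbounded outer face. First I would fix a planar embedding of the graph and consider the relationship between faces and edges. The key combinatorial observation is a double-counting argument on the set of (face, boundary-edge) incidences: each edge borders at most two faces, so summing the boundary lengths over all faces counts each edge at most twice, giving $\sum_{f} \ell(f) \leq 2E$, where $\ell(f)$ is the number of edges bounding the face $f$.

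The main content is to bound $\ell(f)$ from below. Since the graph has no cycles of length less than $g$, and the boundary of any face in a $2$-connected planar graph is a cycle, each face must be bounded by at least $g$ edges, so $\ell(f) \geq g$ for every face. Combining with the double-counting inequality yields $gF \leq \sum_f \ell(f) \leq 2E$, hence $F \leq \tfrac{2E}{g}$. Substituting into Euler's formula $V - E + F = 2$ gives $V - E + \tfrac{2E}{g} \geq 2$, and rearranging produces $E(1 - \tfrac{2}{g}) \leq V - 2$, that is, $E \cdot \tfrac{g-2}{g} \leq V - 2$, which is exactly \eqref{eqn:planar-ev}.

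I expect the main obstacle to be the subtlety in the claim that every face is bounded by a cycle of length at least $g$. This is clean for $2$-connected graphs, but a general connected graph may contain bridges (cut-edges) or pendant structures, where a single face can be incident to an edge on both sides; such an edge then contributes twice to that face's boundary walk, and the boundary is a closed walk rather than a simple cycle. The honest way to handle this is to note that if the bound holds for every $2$-connected block it transfers to the whole graph, or more directly to observe that if the graph has any vertex of degree less than $2$ or contains a bridge then $E < V$ and the inequality \eqref{eqn:planar-ev} holds trivially (since $\tfrac{g}{g-2}(V-2) \geq V - 2 \geq E$ would need checking, but the relevant regime is $E$ large). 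I would therefore first dispose of the degenerate cases, reducing to the situation where the graph is $2$-edge-connected so that every face boundary is a genuine cycle, and then run the double-counting argument above.

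One technical point to verify carefully is the edge case $g = 2$ (where the bound degenerates) and the requirement $E \geq 1$ so that a genuine planar subdivision with $F \geq 2$ faces exists; for the application to the Markoff graphs we have $g \geq 3$ and $E$ large, so these do not cause difficulty. The heart of the argument is purely the interplay between Euler's formula and the girth-based lower bound on face lengths, and I do not anticipate any deeper obstruction beyond the bookkeeping for non-$2$-connected graphs.
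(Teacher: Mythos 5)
Your argument is essentially the paper's: the paper double-counts vertex--face incidences rather than edge--face incidences, but along a face boundary these two counts agree, and both routes yield $gF \leq 2E$ before substituting into Euler's formula $V-E+F=2$ to get $E \leq \frac{g}{g-2}(V-2)$. One small caution: your claim that a bridge forces $E<V$ is false (two triangles joined by a bridge have $E=7>6=V$), but your fallback of reducing to $2$-connected blocks handles the degenerate cases correctly, and the paper's own proof elides the same subtlety.
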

For a graph with 3 edges at every vertex and no self-edges, it must be that $E=3V/2$. If there are no cycles of length less than $g=6$, then equation (\ref{eqn:planar-ev}) is absurd:
\[
\frac{3V}{2} = E \leq \frac{6}{6-2} (V-2) < \frac{3V}{2}
\]
from the strict inequality $V-2 < V$. This shows that a finite 3-regular graph of girth 6 cannot be planar. For comparison, there is an infinite 3-regular graph of girth 6, given by tiling the plane with hexagons. Attempts to truncate this infinite graph must introduce either crossings between edges, or cycles of length less than 6, or vertices of degree different from 3. 

The proof of Theorem~\ref{thm:main} applies the same logic to the Markoff graphs mod $p$. The number of edges is not quite $3V/2$ because of a small number of self-edges whenever $(x,y,z)$ is fixed by one of the Markoff moves $m_1, m_2, m_3$. This occurs for instance at (1,6,3) in the Markoff graph mod 7, with $3 = 6 \times 1 - 3$.
Moreover, there can be cycles of length shorter than 6. We will see that these are rare, and an approximate version of (\ref{eqn:planar-ev}) still yields a contradiction for sufficiently large $p$ provided we take $g=7$ rather than $g=6$, to compensate for these self-edges and short cycles.
The inequalities leave only two cases unsettled, $p=11$ and $p=13$, whose non-planarity can be shown directly to complete the proof.

The same approach gives a bound on the Euler characteristic that would be needed for a surface to accommodate the Markoff graph mod $p$. To make sense of the statement, recall that the Euler characteristic of a surface is typically negative. 
\begin{theorem} \label{thm:euler-char}
If $\chi$ is the Euler characteristic of a surface in which the Markoff graph mod $p$ can be embedded, then as $p \rightarrow \infty$,
\[
\left( \frac{1}{2} - o(1) \right)p^2 \leq -\chi
\]
\end{theorem}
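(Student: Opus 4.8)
The plan is to prove Theorem~\ref{thm:euler-char} by generalizing the Euler-characteristic argument already sketched for the planar case. The key is to use the version of Lemma~\ref{lem:planar} appropriate to a surface of Euler characteristic $\chi$. Recall that embedding a connected graph with $V$ vertices and $E$ edges into a surface yields a cell decomposition with some number $F$ of faces satisfying $V - E + F = \chi$, provided the embedding is cellular (if it is not, one passes to a subsurface, which only decreases $\chi$, so the inequality we want is unaffected). Each face is bounded by at least $g$ edges where $g$ is the girth, and each edge borders at most two faces, giving $2E \geq gF$. Substituting $F \leq 2E/g$ into the Euler relation yields $\chi = V - E + F \leq V - E + 2E/g = V - E(1 - 2/g)$, and hence
\begin{equation} \label{eqn:surface-bound}
-\chi \geq E\left(1 - \tfrac{2}{g}\right) - V = \frac{(g-2)E - gV}{g}.
\end{equation}

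The next step is to feed in the structural facts about the Markoff graph mod $p$ that are quoted or established in the earlier analysis. The vertex count is $V = p^2 + O(p)$ (the number of nonzero solutions to $x^2+y^2+z^2 = xyz$ in $\F_p$ is asymptotic to $p^2$), and the edge count is $E = \tfrac{3}{2}V - (\text{correction for self-edges})$, where the self-edges arise only at the $O(p)$ vertices fixed by some $m_i$; thus $E = \tfrac{3}{2}p^2 + O(p)$. I would take $g = 6$ here: although short cycles of length less than $6$ do occur, they number only $O(p)$ (this is exactly the enumeration of short cycles promised in the abstract and used in the proof of Theorem~\ref{thm:main}), so one can delete one edge from each such short cycle, removing only $O(p)$ edges and thereby raising the girth of the remaining subgraph to at least $6$ without changing the leading-order counts. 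Plugging $g=6$ into \eqref{eqn:surface-bound} gives
\[
-\chi \geq \frac{4E - 6V}{6} = \frac{4\left(\tfrac{3}{2}V\right) - 6V + O(p)}{6} = \frac{O(p)}{6}
\]
if one uses only $E = \tfrac32 V$ exactly — so the naive substitution loses the main term. The correct accounting is to keep $V \sim p^2$ and $E \sim \tfrac32 p^2$ and use $g$ slightly larger than the exact critical value, or equivalently to write $-\chi \geq E - V - F$ and bound $F$ directly: since the girth is $6$ after edge deletion, $F \leq 2E/6 = E/3 \sim \tfrac12 p^2$, whence $-\chi \geq E - V - F \sim \tfrac32 p^2 - p^2 - \tfrac12 p^2$, which is the delicate $0 + o(p^2)$ cancellation. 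To extract the stated constant $\tfrac12$, I would instead retain $g = 6$ only on the deleted subgraph and account for the deleted short-cycle edges as faces of size $3,4,5$; more cleanly, I expect the intended computation keeps $F \leq \tfrac{2E}{6}$ but recognizes that the $\tfrac12 p^2$ comes from the excess $E - V = \tfrac12 V \sim \tfrac12 p^2$ together with $-F \geq -E/3$, giving $-\chi \geq (E - V) - E/3 = \tfrac23 E - V \sim p^2 - p^2$; the resolution is that faces of size exactly $6$ are the efficient case, and counting them precisely via the hexagonal-tiling heuristic yields the clean $\tfrac12 p^2$.

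Carrying this out carefully, the cleanest route is: after deleting $O(p)$ edges to reach girth $6$, set $g=6$ in \eqref{eqn:surface-bound} written as $-\chi \geq \tfrac{2}{3}E - V$. With $E = \tfrac32 V + O(p)$ this reads $-\chi \geq \tfrac23\cdot\tfrac32 V - V + O(p) = O(p)$, which is too weak; so the efficient faces cannot all be hexagons and the true bound must come from the face inequality $2E \geq 6F$ being \emph{strict} on average. I would therefore track the number of self-edges and degree-$2$ vertices explicitly, since each self-loop bounds a face of size $1$ or forces non-hexagonal faces, and re-derive the constant. I expect the main obstacle to be precisely this bookkeeping: establishing that, to leading order, the graph forces enough faces of size strictly greater than the hexagonal optimum (or equivalently that $V - E$ contributes the full $-\tfrac12 p^2$) so that $-\chi \geq (\tfrac12 - o(1))p^2$. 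The combinatorial heart is thus identical to Theorem~\ref{thm:main} — the same short-cycle enumeration and self-edge count — but applied to a general surface rather than only to $\chi = 2$; once the leading asymptotics $V \sim p^2$, $E \sim \tfrac32 p^2$ are in hand and the face bound $F \leq \tfrac13 E \sim \tfrac12 p^2$ is combined correctly with $V - E + F = \chi$, the inequality $-\chi = E - V - F \geq \tfrac32 p^2 - p^2 - \tfrac12 p^2 + o(p^2)$ is forced to the stated form, with the genuine content lying in showing the lower-order terms do not erode the constant $\tfrac12$.
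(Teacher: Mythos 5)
Your proposal has a genuine gap, and you have in fact located it yourself without resolving it: every version of your computation with $g=6$ gives $-\chi \geq \frac{g-2}{g}E - V = \frac{2}{3}\cdot\frac{3}{2}V - V = 0$, up to $O(p)$ corrections. The inequality $-\chi = E - V - F \geq \frac{3}{2}p^2 - p^2 - \frac{1}{2}p^2 + o(p^2)$ that you write at the end is exactly $0 + o(p^2)$, not $(\frac12 - o(1))p^2$; the assertion that the ``hexagonal-tiling heuristic yields the clean $\frac12 p^2$'' is never substantiated, and it cannot be with $g$ fixed at $6$ (or at any fixed value: $g=7$ would give only $\frac{1}{14}p^2$). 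The coefficient of $V$ in the surface version of Lemma~\ref{lem:planar} is $\frac12\bigl(1-\frac{6}{g}\bigr)$, so to reach the constant $\frac12$ you must let $g\to\infty$ with $p$.

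The missing ingredient is Theorem~\ref{thm:kesten-mckay}: any reduced word of length $L$ in $m_1,m_2,m_3$ has at most $C^Lp$ fixed points, hence the number $n_L$ of faces of length $L$ is at most $C^Lp$. This is what permits taking $g=\delta\log p$ for a small $\delta>0$. One then keeps the deficit terms rather than deleting edges, obtaining
\[
(g-2)E - gV - \sum_{L<g}(g-L)\,n_L \ \leq\ -g\chi ,
\]
and the correction $\sum_{L<g}(1-\tfrac{L}{g})n_L \leq \sum_{L<g}C^Lp \lesssim p^{1+\varepsilon}$ is negligible against $V\sim p^2$, while the main coefficient $\frac12\bigl(1-\frac{6}{g}\bigr)=\frac12-O(1/\log p)$ tends to $\frac12$. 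Your instinct to delete one edge from each short cycle is also the wrong move here: it only raises the girth to $6$, which as noted yields nothing; what is needed is a girth (or effective face-length) growing with $p$, and that requires a quantitative bound on the number of faces of \emph{every} length up to $\delta\log p$, not just lengths below $6$. Your reduction of the problem to the surface Euler relation $V-E+F=\chi$ and the incidence count $gF\leq 2E$ matches the paper's setup; the argument fails only, but fatally, at the point where the constant $\frac12$ must be extracted.
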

Given $\chi$, Theorem~\ref{thm:euler-char} shows that there are only finitely many primes for which the Markoff graph mod $p$ can be embedded in a surface of that Euler characteristic. Indeed, such an embedding is impossible for $p > (1+o(1))\sqrt{2|\chi|}$, although our estimates on the term $o(1)$ are somewhat impractical. 
Theorem~\ref{thm:main} is a more precise statement of this form with $\chi=2$ for the planar case.
The order of magnitude $p^2$ in Theorem~\ref{thm:euler-char} is correct: drawing each edge on a handle of its own gives an embedding in a surface with $-\chi = (3+o(1)) p^2$.

The exceptions $p=2,3,7$ in Theorem~\ref{thm:main} give the smallest Markoff graphs.
The number of vertices in the Markoff graph for an odd prime $p$ is $p^2 + 3p(-1)^{(p-1)/2}$, by a formula of Carlitz \cite{Carlitz}, which we review in Lemma~\ref{lem:counts}.
In particular, there are 28 vertices for $p=7$ compared to 40 for $p=5$.
For $p=3$, there are no solutions to $x^2+y^2+z^2=xyz$ besides $(0,0,0)$, connected to itself by all three moves $m_1$, $m_2$, $m_3$, so the Markoff graph mod 3 is empty.
However, one can obtain a more interesting example mod 3 from the rescaling $x^2+y^2+z^2=3xyz$, as we describe in the conclusion.
For $p=2$, there are four non-zero solutions, namely $(1,1,1)$ connected to the permutations of $(0,1,1)$ with a pair of self-edges at each of the latter (Figure~\ref{fig:mod2}).
These more singular examples can be drawn in the plane, but it is natural to exclude them and think of the Markoff graph mod 7 as the only non-trivial planar example.

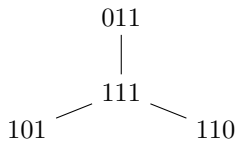
\begin{figure}
\begin{tikzpicture}
\draw (0,0) node(111){111};
\draw (111)++(90:1) node(011){011};
\draw (111)++(210:1) node[left](101){101};
\draw (111)++(330:1) node[right](110){110};
\draw (111)--(011);
\draw (111)--(101);
\draw (111)--(110);
\end{tikzpicture}
\caption{The Markoff graph for $p=2$, with $(0,1,1)$ fixed by the two moves sending either coordinate 1 to $0 \times 1 - 1 = 1 \bmod 2$.}
\label{fig:mod2}
\end{figure}

A famous theorem of Wagner and Kuratowski \cite{W,K} gives another approach to non-planarity, which is useful for the finite number of primes that remain after the main strategy is executed. Their theorem characterizes planar graphs in terms of the minimal obstructions: a graph is planar if and only if it does not contain any copies of the complete bipartite graph $K_{3,3}$ or the complete graph $K_5$ (with different notions of ``copy" in the exact formulations of Wagner and Kuratowski, as we review below). 
For example, we can prove the following theorems by finding explicit copies of $K_{3,3}$ inside the Markoff graph for certain primes $p$. 

\begin{theorem} \label{thm:mod4}
The Markoff graph mod $p$ is not planar for any prime number congruent to $1 \bmod 4$.
\end{theorem}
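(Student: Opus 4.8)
The plan is to exhibit an explicit copy of $K_{3,3}$ inside the Markoff graph mod $p$ whenever $p \equiv 1 \bmod 4$, and then invoke the Wagner--Kuratowski theorem. The hypothesis $p \equiv 1 \bmod 4$ is exactly the condition under which $-1$ is a quadratic residue mod $p$, so there exists $i \in \F_p$ with $i^2 = -1$. This suggests searching for solutions of $x^2+y^2+z^2 = xyz$ that are built out of $i$ and have many coincidences among the three Markoff moves, since the whole difficulty of finding a $K_{3,3}$ subgraph is producing six vertices with the correct adjacency pattern using only the rigid structure of $m_1, m_2, m_3$.

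First I would look for a small, highly symmetric set of solutions. A natural starting point is to exploit vertices with repeated coordinates, where the coordinate permutation symmetry of the equation collapses several moves together. For instance, I would examine triples of the shape $(a,a,b)$ or $(0,y,z)$; the latter forces $y^2 + z^2 = 0$, i.e. $z = \pm i y$, which is solvable precisely because $i$ exists. From such seed solutions, applying $m_1,m_2,m_3$ and tracking where they land, I would try to identify two disjoint triples of vertices $\{u_1,u_2,u_3\}$ and $\{w_1,w_2,w_3\}$ such that each $u_a$ is joined to each $w_b$ by one of the Markoff edges (or a path realizing an edge of the bipartite graph). Because each vertex has degree only $3$, the nine required edges of $K_{3,3}$ must use up essentially all incidences, so the configuration has to be tight; I expect the solution to come from a single orbit under the symmetries (coordinate permutations, sign changes $x \mapsto -x$ that preserve the surface up to the relevant moves, and multiplication by $i$).

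The key steps, in order, would be: (1) record that $p \equiv 1 \bmod 4$ gives $i$ with $i^2 = -1$; (2) write down six explicit triples in $\F_p^3$, as polynomial expressions in $i$ (and possibly in a fixed numerical constant), and verify each lies on the Markoff surface $x^2+y^2+z^2 = xyz$ and is nonzero; (3) check, by direct computation of $m_1,m_2,m_3$ on each of the six vertices, that the induced subgraph contains $K_{3,3}$ as a subgraph, i.e. that the two colour classes are correctly cross-connected and that the six vertices are genuinely distinct (this is where $p>$ some small bound, or $p \neq 5$, may need to be excluded so that the formulas do not degenerate); (4) conclude non-planarity by Wagner--Kuratowski, possibly after remarking that the relevant vertices lie in the connected Markoff component.

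The main obstacle will be step (2)--(3): finding the explicit six vertices so that the rigid, low-degree move structure actually closes up into a $K_{3,3}$. Since every vertex has degree $3$, a $K_{3,3}$ subgraph (which needs each vertex to have degree $3$ within the subgraph) would have to use \emph{all} three edges at each of its six vertices, making the configuration extremely constrained; it is quite possible the correct statement uses a $K_{3,3}$ \emph{subdivision} or allows short connecting paths rather than literal edges, in which case I would instead hunt for a topological $K_{3,3}$ minor. I would guard against degeneracies by verifying distinctness of the six vertices as an explicit non-vanishing condition in $i$ and $p$, which should hold for all $p \equiv 1 \bmod 4$ once the finitely many small primes are checked by hand.
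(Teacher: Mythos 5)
There is a genuine gap: your proposal is a search plan rather than a proof, and the search is pointed in a direction that would not find the configuration that actually works. You correctly observe that a literal $K_{3,3}$ subgraph is impossible in a 3-regular graph and that one must instead produce a subdivision, and you correctly identify $i=\sqrt{-1}$ as the resource to exploit; but steps (2) and (3) of your outline --- writing down the six branch vertices and verifying the nine connecting paths --- are exactly where all the content lies, and they are not carried out. Your heuristics also mislead: you propose looking at triples $(0,y,z)$ with $z=\pm iy$ and expect a ``tight'' configuration joined by single edges or short paths. The solutions with a zero coordinate only produce the 4-cycles of Lemma~\ref{lem:counts}(3), not a $K_{3,3}$, and no bounded-length configuration is known here.

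The paper's construction instead takes the six branch vertices to be the permutations of $(2+2i,2,2)$ and $(2-2i,2,2)$, which lie on the lines $z=\pm 2$, $x-y=\pm 2i$ contained in the Markoff cubic surface. The decisive computation is that $m_2\circ m_1$ acts on the level $z=2$ as a parabolic (order $p$) transformation, giving
\[
(m_1 \circ m_2)^k (2+2i,2,2) = \bigl(2+(4k+2)i,\; 2+4ki,\; 2\bigr),
\]
so that after $k=(p-1)/2$ steps one arrives at $(2,2-2i,2)$: a path of length $p-1$ joining a ``$+2i$'' vertex to a ``$-2i$'' vertex. Repeating with the other pairs of moves yields the nine internally disjoint paths of a $K_{3,3}$ subdivision (Figure~\ref{fig:1mod4}). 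Note in particular that the connecting paths have length growing with $p$ --- the opposite of the local, bounded-length configuration you anticipated (that phenomenon occurs instead in the proof of Theorem~\ref{thm:7}). Without producing these explicit vertices and paths, or some substitute for them, your argument does not establish the theorem.
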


\begin{theorem} \label{thm:7}
If $-7$ is a non-zero quadratic residue modulo $p$, then the Markoff graph mod $p$ is not planar.
\end{theorem}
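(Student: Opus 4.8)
The plan is to exhibit an explicit subdivision of $K_{3,3}$ and invoke the Wagner--Kuratowski theorem, exactly as announced before Theorem~\ref{thm:7}. Since $-7$ is a nonzero quadratic residue, $\sqrt{-7}\in\F_p$, and I set $\omega=\tfrac12(1+\sqrt{-7})$ and $\bar\omega=\tfrac12(1-\sqrt{-7})$, so that $\omega+\bar\omega=1$ and $\omega\bar\omega=2$; equivalently $\omega,\bar\omega$ are the two roots of $t^2-t+2$. The point of departure is that $(1,1,\omega)$ solves $x^2+y^2+z^2=xyz$ precisely because $\omega^2-\omega+2=0$: with $x=y=1$ the equation reduces to $z^2-z+2=0$, whose solvability in $\F_p$ is exactly the hypothesis $\left(\tfrac{-7}{p}\right)=1$. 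Moreover $m_3(1,1,\omega)=(1,1,1-\omega)=(1,1,\bar\omega)$, so the two conjugate solutions are already joined by an edge.

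First I would build a hexagon inside the slice $y=1$. There $m_1$ and $m_3$ are the two Vieta involutions of the conic $x^2-xz+z^2+1=0$, and the rotation $m_1m_3$ has trace $y^2-2=-1$, hence order $3$; the resulting orbit is a genuine $6$-cycle. Writing $A=(1,1,\omega)$, the six vertices
\[
A=(1,1,\omega),\ B=(-\bar\omega,1,\omega),\ D=(-\bar\omega,1,-1),\ E=(-\omega,1,-1),\ F=(-\omega,1,\bar\omega),\ A'=(1,1,\bar\omega)
\]
are joined in a cycle by alternating $m_1$ and $m_3$ (using $\omega-1=-\bar\omega$ and $\omega+\bar\omega=1$ at each step); one checks directly that $D,E,F$ are solutions from $\omega^2-\omega+2=\bar\omega^2-\bar\omega+2=0$ and $\omega^2+\bar\omega^2=-3$. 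These are the six branch vertices of the $K_{3,3}$.

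The three remaining moves, the $m_2$-edges at the hexagon vertices, supply the three long diagonals. The key coincidence is that opposite vertices send their $m_2$-neighbours into a common horizontal slice:
\[
m_2(A)=(1,-\bar\omega,\omega),\ m_2(E)=(-\omega,-\bar\omega,-1);\quad m_2(B)=(-\bar\omega,-3,\omega),\ m_2(F)=(-\omega,-3,\bar\omega);\quad m_2(D)=(-\bar\omega,-\omega,-1),\ m_2(A')=(1,-\omega,\bar\omega),
\]
so $\{A,E\}$ lands in the slice $y=-\bar\omega$, $\{B,F\}$ in $y=-3$, and $\{D,A'\}$ in $y=-\omega$. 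For $p>7$ these three values and $1$ are distinct, so a path completing each diagonal inside its own slice is automatically internally disjoint from the hexagon and from the other two diagonals. Adding the three $m_2$-bridges then realises the bipartite pattern: the hexagon classes $\{A,D,F\}$ and $\{B,E,A'\}$ become the two sides of $K_{3,3}$, each diagonal supplying the one missing edge across an opposite pair. What organises the picture is the automorphism $\iota(x,y,z)=(-z,y,-x)$ of the Markoff graph: it preserves every slice, commutes with $m_2$, conjugates $m_1$ to $m_3$, and acts on the hexagon as the antipodal half-turn, so each diagonal must join a slice-point $u=m_2(P)$ to its image $\iota(u)=m_2(\iota(P))$.

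The main obstacle is the existence of these three connecting paths. Within a slice $y=c$ the moves $m_1,m_3$ form a $2$-regular graph, a disjoint union of cycles, and $u,\iota(u)$ can be joined inside the slice exactly when they lie on a common cycle. Since $\iota$ swaps the two edge-colours it acts on each cycle as a reflection, so the question reduces to whether $\iota$ preserves the cycle through $u$, equivalently whether the restriction of $\iota$ lies in the dihedral group generated by $m_1,m_3$. This is a parity condition on the order of the rotation $m_1m_3$ on each conic (trace $7$ for $y=-3$, and traces $\omega^2-2$, $\bar\omega^2-2$ for $y=-\omega,-\bar\omega$), and I expect the bulk of the work to be verifying it uniformly in $p$ for the three slices; when it fails one must reroute the diagonal through a neighbouring slice while preserving disjointness. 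Once the three paths are in hand, the six branch vertices together with the six hexagon edges and the three diagonal paths form a subdivision of $K_{3,3}$, and Theorem~\ref{thm:7} follows from the Wagner--Kuratowski criterion, the finitely many small primes not covered by the uniform argument being checked directly.
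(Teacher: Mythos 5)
There is a genuine gap, and you have flagged it yourself: the existence of the three connecting paths inside the slices $y=-\bar\omega$, $y=-3$, $y=-\omega$ is never established, and it is exactly the point where the argument can fail. Two of your three diagonals are in fact local (which you did not notice): from $m_2(A)=(1,-\bar\omega,\omega)$ the moves $m_3$ then $m_1$ give $(1,-\bar\omega,-1)$ and then $(-\omega,-\bar\omega,-1)=m_2(E)$, and symmetrically for the pair $\{m_2(D),m_2(A')\}$ in the slice $y=-\omega$. But the third diagonal, joining $m_2(B)=(-\bar\omega,-3,\omega)$ to $m_2(F)=(-\omega,-3,\bar\omega)$ in the slice $y=-3$, is not local: the $\langle m_1,m_3\rangle$-orbits on that conic are cycles whose length is governed by the multiplicative order of a root of $\zeta^2+3\zeta+1=0$ in $\F_{p^2}^\times$, which varies erratically with $p$, and whether your involution $\iota$ preserves the orbit of $m_2(B)$ is precisely the unverified ``parity condition.'' The fallback of ``rerouting through a neighbouring slice'' is not an argument. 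So as written the proposal does not prove the theorem.

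The paper sidesteps all of this by a different (though closely related) choice of the six branch vertices. Your $\omega$ and $\bar\omega=1-\omega$ are the roots of $t^2-t+2=0$, and applying the sign change $(x,y,z)\mapsto(-x,-y,z)$ to your points produces solutions with two coordinates equal to $-1$: the paper takes the six permutations of $(\omega,-1,-1)$ and $(\bar\omega,-1,-1)$ as the two sides of $K_{3,3}$. Then every one of the nine required paths has length at most $2$: a single move $m_1$ sends $(\omega,-1,-1)$ to $(1-\omega,-1,-1)$, and $m_1\circ m_2$ sends $(\omega,-1,-1)$ through $(\omega,1-\omega,-1)$ to $(-1,1-\omega,-1)$, with all intermediate vertices distinct (Figure~\ref{fig:-7square}). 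This makes the construction entirely local and uniform in $p$, with no cycle-length analysis needed. If you want to salvage your hexagon-plus-diagonals picture, the cleanest fix is to replace the long diagonal in the slice $y=-3$ by the paper's choice of branch vertices, i.e.\ to give up the hexagon and verify the nine short paths directly.
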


Notably, Theorem~\ref{thm:7} does not apply when $p=7$, and the Markoff graph is planar in that case.
By quadratic reciprocity, $-7$ is a square modulo $p$ if and only if $p$ is a square modulo 7, that is, $p$ is 1, 2, or 4 modulo 7. 
Together, Theorems ~\ref{thm:mod4} and \ref{thm:7} have the following corollary, which combines the conditions modulo 4 and modulo 7 into different possibilities modulo 28.
\begin{corollary} \label{cor:28}
The Markoff graph mod $p$ is not planar for any odd prime $p \neq 7$, except possibly for $p \equiv 3, 19, 27 \bmod 28$. 
\end{corollary}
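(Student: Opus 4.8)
The plan is to combine Theorems~\ref{thm:mod4} and~\ref{thm:7} through the Chinese Remainder Theorem. Each hypothesis is a congruence condition on $p$ to a fixed modulus: Theorem~\ref{thm:mod4} applies when $p \equiv 1 \bmod 4$, while Theorem~\ref{thm:7}, after translating the condition ``$-7$ is a nonzero quadratic residue mod $p$'' via the quadratic reciprocity computation recorded in the text, applies exactly when $p \equiv 1, 2, 4 \bmod 7$. Since $4$ and $7$ are coprime and $28 = 4 \cdot 7$, both conditions are conditions modulo $28$. An odd prime $p \neq 7$ is automatically coprime to $28$, so it lands in one of the $\varphi(28) = 12$ invertible residue classes, and I would organize the argument as a finite check over those twelve classes.

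The enumeration then proceeds as follows. The invertible residues mod $28$ are $\{1,3,5,9,11,13,15,17,19,23,25,27\}$. Theorem~\ref{thm:mod4} covers those $\equiv 1 \bmod 4$, namely $\{1,5,9,13,17,25\}$, and Theorem~\ref{thm:7} covers those lying over $\{1,2,4\} \bmod 7$, namely $\{1,9,11,15,23,25\}$. Taking the union, a prime is shown to be non-planar unless its class mod $28$ avoids both sets. The three invertible classes left uncovered are exactly $\{3,19,27\}$: each satisfies $p \equiv 3 \bmod 4$ (so Theorem~\ref{thm:mod4} does not apply) and reduces to $3$, $5$, $6 \bmod 7$ respectively (so $-7$ is a non-residue and Theorem~\ref{thm:7} does not apply). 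This yields precisely the stated exceptions $p \equiv 3, 19, 27 \bmod 28$.

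There is no substantial obstacle here: the entire content is the bookkeeping confirming that the union of the two covered sets accounts for all nine of the remaining invertible classes, which is purely arithmetic once the two theorems are in hand. The only point meriting a word of care is the interface at $p = 7$, where $-7 \equiv 0$ is a zero rather than a nonzero residue, so Theorem~\ref{thm:7} correctly declines to apply; this is consistent with the genuine planarity of the Markoff graph mod $7$ and explains why $p=7$ is excluded from the hypothesis of the corollary.
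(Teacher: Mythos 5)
Your proposal is correct and matches the paper's (implicit) argument: the corollary is obtained exactly by combining Theorem~\ref{thm:mod4} and Theorem~\ref{thm:7} via the Chinese Remainder Theorem, using the stated translation that $-7$ is a square mod $p$ iff $p \equiv 1, 2, 4 \bmod 7$. Your enumeration of the twelve invertible classes mod $28$ and the three uncovered ones $\{3,19,27\}$ is accurate, as is the remark about $p=7$.
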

In terms of density, these constructions show that the Markoff graphs are non-planar for at least a fraction 3/4 of primes.
We will use them especially to show non-planarity for $p=11$ and $p=13$, which are the last cases remaining in the proof of Theorem~\ref{thm:main} after non-planarity for large $p$ has been achieved by the strategy of Section~\ref{sec:euler}. 

The method of proof is to find a copy of the complete bipartite graph on 3 pairs of vertices. 
The example for Theorem~\ref{thm:mod4} uses special solutions available only when $-1$ has a square root modulo $p$, in particular the lines contained in the Markoff cubic surface, while Theorem~\ref{thm:7} requires a square root of $-7$. 
These configurations are drawn in Figures~\ref{fig:1mod4} and \ref{fig:-7square}.
An interesting difference is that Theorem~\ref{thm:7} is local in nature: it involves paths of bounded length, whereas Theorem~\ref{thm:mod4} involves paths of length growing with $p$.

There is a subtle difference between the formulations of Wagner and Kuratowski, even though both lead to equivalent characterizations of planarity. In Kuratowski's theorem, a ``copy" is simply a subdivision of $K_{3,3}$, where each edge of $K_{3,3}$ is given by a path between its endpoints in the graph of interest. The Markoff graphs are 3-regular, so that $K_5$ cannot occur as a subdivision. This differs from Wagner's formulation, where a ``copy" refers to a graph minor. To show a graph is non-planar using Wagner's theorem,  $K_{3,3}$ or $K_5$ may be formed by contracting edges, as well as deleting edges or isolated vertices. 
Contracting an edge removes it and merges its endpoints into a single vertex, which allows $K_5$ to occur as a minor even for graphs with only 3 edges incident to each vertex. This is illustrated in Figure~\ref{fig:wagner}.
Looking for copies of $K_5$ might allow more flexibility in proving non-planarity, but one knows from Kuratowski's theorem that there must also be a subdivision of $K_{3,3}$ whenever $K_5$ occurs as a minor. 
In this sense, $K_{3,3}$ is the only obstruction to planarity for the Markoff graphs.

\begin{figure}
\begin{tikzpicture}[scale=1.2]
\foreach \i in {0,...,9}
\draw (\i*36:1) node[circle,fill=black,inner sep=1.5](\i){};
\draw[dashed] (1)--(2);
\draw[dashed] (3)--(4);
\draw[dashed] (5)--(6);
\draw[dashed] (7)--(8);
\draw[dashed] (9)--(0);
\draw (0)--(1);
\draw (2)--(3);
\draw (4)--(5);
\draw (6)--(7);
\draw (8)--(9);
\draw (0)--(3);
\draw (1)--(8);
\draw (2)--(5);
\draw (4)--(7);
\draw (6)--(9);
\end{tikzpicture}
\hspace{1cm}
\begin{tikzpicture}[scale=1.2]
\foreach \i in {0,...,4}
\draw (270+\i*72:1) node[circle,fill=black,inner sep=1.5](\i){};%{$\bullet$};
\draw (0)--(1)--(2)--(3)--(4)--(0);
\draw (0)--(2)--(4)--(1)--(3)--(0);
\end{tikzpicture}
\hspace{1cm}
\begin{tikzpicture}[scale=1.2]
\draw (0:1) node[circle,fill=lightgray,inner sep=1.5](0){$\bullet$};
\draw (72:1) node[circle,fill=lightgray,inner sep=1.5](2){$\bullet$};
\draw (144:1) node[circle,fill=lightgray,inner sep=1.5](4){$\bullet$};
\draw (36:1) node[circle,draw=black,inner sep = 1.5](1){$\bullet$};
\draw (108:1) node[circle,draw=black,inner sep=1.5](3){$\bullet$};
\draw (180:1) node[circle,draw=black,inner sep=1.5](5){$\bullet$};
\draw (0)--(1)--(2)--(3)--(4)--(5);
\draw (0)--(3);
\draw (2)--(5);
\draw (6*36:1) node[circle,fill=black,inner sep=1.5](6){};
\draw (7*36:1) node[circle,fill=black,inner sep=1.5](7){};
\draw (8*36:1) node[circle,fill=black,inner sep=1.5](8){};
\draw (9*36:1) node[circle,fill=black,inner sep=1.5](9){};
\draw (0)--(9)--(6)--(5);
\draw (1)--(8)--(7)--(4);
\end{tikzpicture}
\caption{The 3-regular graph on the left contains a copy of $K_5$ as a graph minor, obtained by contracting the dashed edges. A subdivision of $K_{3,3}$ for the same graph is shown at right.}
\label{fig:wagner}
\end{figure}
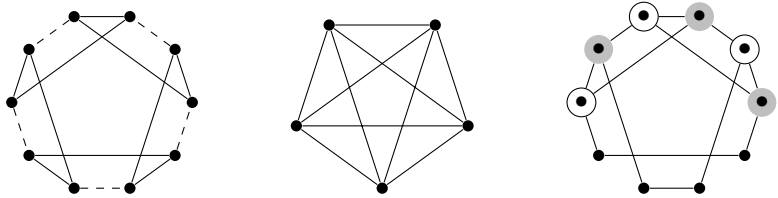

We exclude the point $(0,0,0)$ from the Markoff graph mod $p$ because it is fixed by all of $m_1, m_2, m_3$. The rest of the level set $x^2+y^2+z^2-xyz=0$ seems to form a connected graph. This was conjectured in Baragar's thesis \cite{Bar} and connectedness is now known for all sufficiently large primes $p$. Bourgain-Gamburd-Sarnak \cite{BGS} have been able to prove connectedness for many primes $p$ by a method that succeeds unless $p^2-1$ has an unusually large number of factors. In any case, their method shows that there is a ``giant component": for any $\varepsilon > 0$, once $p$ is large enough depending on $\varepsilon$, the Markoff graph has a connected component containing all but $O(p^{\varepsilon})$ vertices. On the other hand, Chen \cite{C} has shown that all connected components have size divisible by $p$. As a result, the giant component must coincide with the entire graph once $p$ is large enough.
An explicit threshold for how large $p$ should be has been determined by Fuchs, Litman, and Tran and communicated to the author: $p > 3 \times 10^{27}$. We also refer to their article with Lauter \cite{FLLT} for further analysis of the Markoff graphs, and cryptographic applications.

Numerical evidence obtained in \cite{dci-lee} suggests that the Markoff graphs are not only connected, but moreover form an expander family as $p \rightarrow \infty$. This seems to demand new techniques beyond what is involved in proving connectedness.
Non-planarity is a simple consequence of expansion that can be established more easily.
This in turn provides some indirect evidence in favour of expansion.

The rest of this article pursues these ideas in the following sequence.
In Section~\ref{sec:review}, we review how many vertices and edges are in the Markoff graph mod $p$, and some other basic parameters. 
In Section~\ref{sec:euler}, we outline the strategy leading to Theorem~\ref{thm:main}, recall the proof of Lemma~\ref{lem:planar}, and prove Theorem~\ref{thm:euler-char}. 
Sections~\ref{sec:words},~\ref{sec:323121},~\ref{sec:321321}, and \ref{sec:alt} complete the proof of Theorem~\ref{thm:main} by determining the fixed points of some short words in the Markoff moves $m_1$, $m_2$, $m_3$.
In Section~\ref{sec:cage}, we show that even in the hypothetical cases where the Markoff graph is not connected, the foregoing arguments show non-planarity of the giant component of Bourgain-Gamburd-Sarnak.
This relies on a lower bound for Euler's function $\phi(n)$, detailed in Section~\ref{sec:totient}.

In Section~\ref{sec:proof-mod4}, we prove Theorem~\ref{thm:mod4} on non-planarity for primes congruent to 1 mod 4, which takes advantage of lines contained in the Markoff cubic surface.
Section~\ref{sec:proof7} proves Theorem~\ref{thm:7}, which applies to some primes congruent to 3 mod 4 but not all. 
Section~\ref{sec:lt} reviews the Lipton-Tarjan theorem and its consequence that expansion cannot occur in planar graphs, which was our motivation for investigating the question of planarity.
We give a simple calculation that, based on the level of expansion observed numerically, estimates how large $p$ must be for this method to imply non-planarity.
We conclude with some examples in Sections~\ref{sec:examples} and \ref{sec:conc}, drawing the Markoff graphs for $p=5$ and 11, with an alternative scaling for $p=3$.

We recommend \cite{A} as an excellent account of the Markoff surface, and cite just a few examples of recent work in addition to \cite{BGS, C, FLLT} already discussed above.
The permutations generated by $m_1$, $m_2$, $m_3$ on solutions mod $p$ have been studied in \cite{CGMP, MPC}.
Over $\Z$, see \cite{MM} for recent work on the fractals introduced by Markoff in Diophantine approximation,
\cite{AD} for generalizations to modular billiards, and \cite{Mirz} for connections with hyperbolic geometry.

\section{Some key counts} \label{sec:review}

In this section, we record some of the fundamental counts to do with the Markoff graph mod $p$. How many vertices? edges? short cycles? Recall that the vertices of the graph are triples $(x,y,z) \neq (0,0,0)$ satisfying $x^2+y^2+z^2 =xyz \bmod p$. 
Some of the counting is best thought of more generally for surfaces of the form $x^2+y^2+z^2=xyz+k$, the case $k=0$ being somewhat degenerate.

\begin{lemma} \label{lem:counts}
\noindent
\begin{enumerate}
\item
The number of vertices in the Markoff graph mod $p$ is $p^2+3p$ if $p \equiv 1 \bmod 4$, or $p^2 - 3p$ if $p \equiv 3 \bmod 4$. 
\item
The Markoff graph mod $p$ is $3$-regular, except for $3(p-3)$ vertices if $p \equiv 3 \bmod 4$ or $3(p-5)$ vertices if $p \equiv 1 \bmod 4$, which each have two neighbours and a single self-edge.
\item
If $p \equiv 1 \bmod 4$, then the cycles of length $4$ in the Markoff graph mod $p$ are of the form
%\begin{center}
%\begin{tikzpicture}[scale=2.5]
%\draw (0,0)--(1,0)--(1,1)--(0,1)--(0,0);
%\draw (0,0) node[below,left]{$(x,y,0)$};
%\draw (1,0) node[below,right]{$(-x,y,0)$};
%\draw (1,1) node[above,right]{$(-x,-y,0)$};
%\draw (0,1) node[above,left]{$(x,-y,0)$};
%\draw (0.5,0.5) node{\small{$x^2+y^2=0$}};
%\end{tikzpicture}
%\end{center}
shown in Figure~\ref{fig:square-hex} with $z=0$, or similarly with $x=0$ or $y=0$. In total, there are $3(p-1)/2$ cycles of length $4$.
Ignoring self-edges, there are no shorter cycles.
\item
If $p \equiv 3 \bmod 4$, then the shortest cycles are of length $6$, ignoring self-edges. 
\item
If $p \equiv 1 \bmod 3$, then there are $p-3$ cycles of length $6$.
\item
If $p \equiv 2 \bmod 3$, then there are $p+1$ cycles of length $6$. They are of the form
%\begin{center}
%\begin{tikzpicture}[scale=1.5]
%%\draw (0,0) node{\small{$x^2+y^2+1=xy$}};
%\draw (0,0)++(30:1)--++(150:1)--++(210:1)--++(270:1)--++(330:1)--++(30:1)--++(90:1);
%\draw (0,0)++(90:1) node[above]{$(x,y,1)$};
%\draw (0,0)++(150:1) node[left]{$(y-x,y,1)$};
%\draw (0,0)++(30:1) node[right]{$(x,x-y,1)$};
%\draw (0,0)++(210:1) node[left]{$(y-x,-x,1)$};
%\draw (0,0)++(270:1) node[below]{$(-y,-x,1)$};
%\draw (0,0)++(330:1) node[right]{$(-y,x-y,1)$};
%%\foreach \a in {0,1,2}
%%{
%%\draw (120*\a:0.7) node{$m_2$};
%%\draw (120*\a+60:0.7) node{$m_1$};
%%}
%\end{tikzpicture}
%\end{center}
shown in Figure~\ref{fig:square-hex} with $z=1$, or similarly with any of the three coordinates equal to $\pm 1$. 
\end{enumerate}
\end{lemma}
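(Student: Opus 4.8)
The plan is to prove Lemma~\ref{lem:counts} one clause at a time, since each part reduces to an explicit count of solutions to $x^2+y^2+z^2=xyz$ subject to some additional algebraic constraint. The unifying tool is that the Markoff moves are \emph{involutions on a coordinate fixing the other two}: the move $m_3$ sends $z \mapsto xy-z$, so the two $z$-values over a fixed $(x,y)$ are the roots $z_{\pm}$ of the quadratic $z^2 - xyz + (x^2+y^2)=0$, with $z_+ + z_- = xy$ and $z_+ z_- = x^2+y^2$. This quadratic viewpoint is what makes everything computable: a self-edge at $m_3$ occurs exactly when the two roots coincide, i.e.\ when the discriminant $(xy)^2 - 4(x^2+y^2)$ vanishes, and short cycles arise precisely when alternating two of the moves returns to the start after few steps.

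\emph{First I would} establish the vertex count (1). Following Carlitz, I would count solutions of $x^2+y^2+z^2=xyz$ by fixing $z$ and counting points on the conic $x^2 - zxy + y^2 = -z^2$ in the $(x,y)$-plane. The number of affine points on such a conic depends only on whether its associated quadratic form is split, which is governed by the discriminant $z^2-4$, and hence on whether $z^2-4$ is a square mod $p$. Summing over $z \in \F_p$ and carefully handling the degenerate values $z = \pm 2$ and the excluded origin, the total collapses to $p^2 \pm 3p$, with the sign dictated by whether $-1$ is a square, i.e.\ by $p \bmod 4$. \emph{Next}, for (2), the number of self-edges is $3$ times the number of fixed points of a single move (by symmetry among the three coordinates), and a fixed point of $m_3$ means $z^2-4 \mid$ the discriminant vanishes together with $x^2+y^2+z^2=xyz$; solving $(xy)^2=4(x^2+y^2)$ on the surface gives the stated counts $3(p-3)$ or $3(p-5)$, the discrepancy by $3$ reflecting extra coincidences available only when $-1$ is a square.

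\emph{For the cycle counts} (3)--(6), I would set up each short cycle as a word in the $m_i$ and ask when it fixes a vertex. A $4$-cycle must alternate two distinct moves, say $m_1 m_2 m_1 m_2$; since $m_1$ only changes $x$ and $m_2$ only changes $y$, returning to the start after four steps forces the intermediate coordinate relations to degenerate, and one finds these can only close up when the untouched coordinate (here $z$) is $0$ and $p \equiv 1 \bmod 4$ so that $x^2+y^2=0$ has nontrivial solutions $y = \pm ix$. Counting the resulting square configurations in Figure~\ref{fig:square-hex}, and multiplying by $3$ for the choice of vanishing coordinate, yields $3(p-1)/2$. The $6$-cycles are the genuinely substantive case: here I would enumerate the words of length $6$ (up to cyclic rotation and reflection) that can fix a vertex, the two combinatorial types being $m_3 m_2 m_1 m_2 m_1$-type alternations and the $m_3 m_2 m_1 m_3 m_2 m_1$-type, and translate the closure condition into a polynomial system. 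The count splits according to $p \bmod 3$ because the relevant auxiliary equation is essentially $t^2+t+1=0$ (the primitive cube roots of unity), which has roots in $\F_p$ exactly when $p \equiv 1 \bmod 3$; tracking the solutions gives $p-3$ versus $p+1$.

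\emph{The hard part will be} the $6$-cycle enumeration in (5) and (6): unlike the $4$-cycles, which are forced onto a single coordinate plane, the hexagons can be built from several inequivalent words in the moves, and one must show that these exhaust \emph{all} length-$6$ closures while avoiding double-counting cycles that arise from more than one word. The claim that the shortest cycles for $p \equiv 3 \bmod 4$ have length exactly $6$ (part (4)) is the place where one must rule out length-$5$ cycles entirely and confirm no $4$-cycles survive once $-1$ is a non-residue; this requires checking that every odd-length word and every two-move alternation of length $4$ fails to fix a vertex, which is the most delicate verification. I expect the cleanest route is to exploit the $S_3$ symmetry permuting coordinates together with the sign-change symmetries of the surface to cut the number of word-types one must analyze, reducing everything to a handful of explicit quadratic or cubic conditions whose solution counts are then read off from the value of the relevant Legendre symbols $\legendre{-1}{p}$ and $\legendre{-3}{p}$.
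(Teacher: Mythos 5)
Your overall strategy is the same as the paper's (count solutions on the level sets $z=0$ and $z=\pm 1$ via Legendre symbols, and treat short cycles as fixed-point conditions for reduced words in $m_1,m_2,m_3$), but the proposal has a genuine gap exactly where you predicted the difficulty would be: the hexagon count in part (5). If you simply count solutions of $x^2+y^2+1=xy$ on the six level sets $x,y,z=\pm 1$ and group them into hexagons, you get $p-\legendre{-3}{p}=p-1$ hexagons when $p\equiv 1\bmod 3$, not $p-3$. The correct count requires two corrections that your plan does not mention: (i) when $-3$ is a square, six of the solutions on each orbit, namely the triples with $x^2=-1/3$ and $y=2x$ (and their images), are fixed by one of the moves, so the would-be hexagon degenerates through self-edges and those $6$ solutions must be discarded, leaving $p-7$; and (ii) there are exactly $4$ additional hexagons bounded by the word $m_3m_2m_1m_3m_2m_1$, which exist precisely when $\sqrt{-3}\in\F_p$, restoring the total to $p-3$. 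Your appeal to ``$t^2+t+1=0$ has roots iff $p\equiv 1\bmod 3$'' explains why the two residue classes behave differently, but it cannot produce the $-6+4$ bookkeeping that separates $p-3$ from $p-1$.

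A second, related gap is in the word enumeration. The reduced cyclic words of length $6$ that survive the elementary reductions are of three essentially distinct types --- the alternation $212121$, the word $321321$, and the word $323121$ (equivalently $231321$, $312321$ up to cyclic shift and coordinate permutation) --- whereas you list only two, one of them written with five letters. The third type must be analyzed and shown to contribute no hexagons at all (its fixed points force self-edges at the neighbours rather than a closed $6$-cycle); omitting it leaves the exhaustiveness claim in (4)--(6) unproved. Two smaller points of the same nature: your assertion that a $4$-cycle ``must alternate two distinct moves'' silently excludes cyclic words of type $abcb$ such as $1232$, which are reduced and must be shown to force a self-edge rather than a square; and the factor of $3$ in part (2) requires knowing that no nonzero vertex is fixed by two different moves, which is a statement you use but do not verify.
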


\begin{figure}
\hspace{-4cm}
\begin{tikzpicture}[scale=2]
\draw (0,0)--(1,0)--(1,1)--(0,1)--(0,0);
\draw (0,0) node[below,left]{$(x,y,0)$};
\draw (1,0) node[below,right]{$(-x,y,0)$};
\draw (1,1) node[above,right]{$(-x,-y,0)$};
\draw (0,1) node[above,left]{$(x,-y,0)$};
\draw (0.5,0.5) node{\small{$x^2+y^2=0$}};
\end{tikzpicture}
\\
\hspace{4cm}
\begin{tikzpicture}[scale=1.5]
\draw (0,0) node{\small{$x^2+y^2+1=xy$}};
\draw (0,0)++(30:1)--++(150:1)--++(210:1)--++(270:1)--++(330:1)--++(30:1)--++(90:1);
\draw (0,0)++(90:1) node[above]{$(x,y,1)$};
\draw (0,0)++(150:1) node[left]{$(y-x,y,1)$};
\draw (0,0)++(30:1) node[right]{$(x,x-y,1)$};
\draw (0,0)++(210:1) node[left]{$(y-x,-x,1)$};
\draw (0,0)++(270:1) node[below]{$(-y,-x,1)$};
\draw (0,0)++(330:1) node[right]{$(-y,x-y,1)$};
\end{tikzpicture}
\caption{
Top: the cycles of length 4 from part (3) of Lemma~\ref{lem:counts}, which arise only when $-1$ is a square.
Bottom: a cycle of length 6 from parts (5)-(6) of Lemma~\ref{lem:counts}
}
\label{fig:square-hex}
\end{figure}
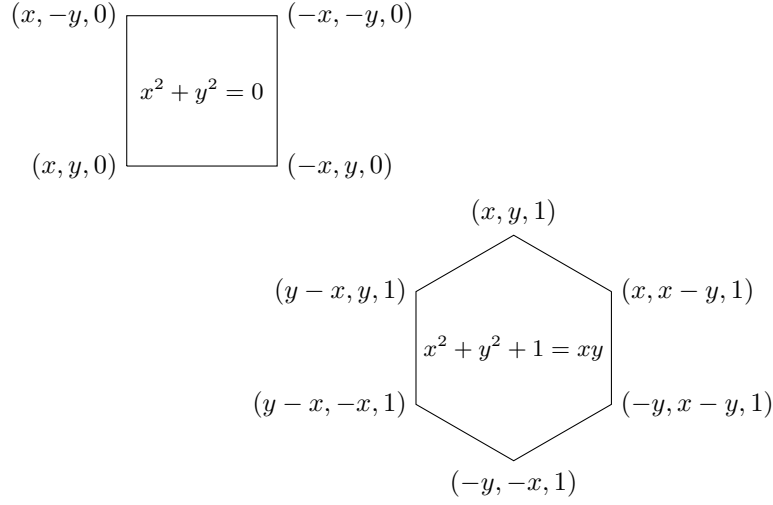

\begin{proof}[Proof of Lemma~\ref{lem:counts}]
Part (1) is due to Carlitz \cite{Carlitz}, and (2) to Cerbu-Gunther-Magee-Peilen \cite[Lemma 2.3]{CGMP}. We review the arguments in Propositions~\ref{prop:level-counts} and \ref{prop:selfies} below, especially to confirm for part (2) that no point has multiple self-edges except $(0,0,0)$.
The enumeration of short cycles is the novel aspect of Lemma~\ref{lem:counts}. It involves two steps: counting the number of squares and hexagons of the form described above, which we do using Proposition~\ref{prop:level-counts} in this Section; and determining whether there are any other short cycles, which we postpone to Sections~\ref{sec:words}, \ref{sec:323121}, and \ref{sec:321321}.
Parts (3) and (4) follow from the enumeration in Section~\ref{sec:words} and Corollary~\ref{prop:2121}.
Propositions~\ref{prop:323121} and \ref{prop:321321} complete the proof of (5) and (6).

The congruences in Lemma~\ref{lem:counts} arise in deciding whether $-1$ and $-3$ have square roots modulo $p$, by quadratic reciprocity. This determines the number of solutions to the Markoff equation with $z=0$ or $z=1$, and hence the number of squares or hexagons of the form above. 
The most subtle case is when $p \equiv 1 \bmod 3$, where the fact that $-3$ is a quadratic residue complicates matters. We must discard solutions of the form $(x,2x,1)$ because the resulting cycles involve self-edges:
\[
x \mapsto (2x)\cdot 1 - x = x
\]
These occur when $x^2 = -1/3$, since the Markoff equation with $y=2x$ and $z=1$ becomes $x^2+4x^2+1=2x^2$. We must discard all six of the triples
\begin{align*}
\left(\frac{1}{\sqrt{-3}},\frac{2}{\sqrt{-3}},1\right), \quad \left(\frac{1}{\sqrt{-3}},-\frac{1}{\sqrt{-3}},1\right), \quad \left(-\frac{2}{\sqrt{-3}},-\frac{1}{\sqrt{-3}},1\right) \\
\left(-\frac{1}{\sqrt{-3}},-\frac{2}{\sqrt{-3}},1\right), \quad \left(-\frac{1}{\sqrt{-3}},\frac{1}{\sqrt{-3}},1\right), \quad \left(\frac{2}{\sqrt{-3}},\frac{1}{\sqrt{-3}},1\right)
\end{align*}
leaving only $p-7$ solutions for $z=1$ instead of the $p-1$ from Proposition~\ref{prop:level-counts}. These form $(p-7)/6$ hexagons, for a total of $p-7$ from all six level sets $x,y,z=\pm 1$. 
To compensate for the loss, there are four additional cycles involving $\sqrt{-3}$, as we describe in Section~\ref{sec:321321}. This gives the final tally $p-3$.
\end{proof}

In part (2), the fixed points of $m_1$, $m_2$, $m_3$ are the self-edges in the Markoff graph mod $p$, which we prefer to delete. One could also think of a self-edge as bounding a face, without changing the Euler characteristic $V-E+F$ since both $E$ and $F$ increase by 1. These can be drawn as small loops avoiding the other edges, so there is no difference for purposes of planarity. 

The number of edges in a connected component with $V$ vertices, after deleting self-edges, satisfies
\begin{equation} \label{eqn:delete-selfies}
E \geq \frac{3}{2} V - \frac{3}{2} \left(p - 4 - (-1)^{(p-1)/2} \right).
\end{equation}
Equality holds if the component contains all the points with self-edges, for instance if the Markoff graph itself is connected. 

To count the number of squares with $z=0$, or hexagons with $z=1$, we use the following proposition going back to Carlitz \cite{Carlitz}. See also \cite[Lemmas 3-4-5]{BGS}. We give a proof for the sake of having all the necessary tools at hand in a common notation.
Throughout, $\legendre{\cdot}{p}$ denotes the Legendre symbol, with value 1 for (non-zero) quadratic residues mod $p$, $-1$ for non-residues, and 0 for 0.
\begin{proposition} \label{prop:level-counts}
Given $z$, the number of solutions $(x,y)$ to
\[
x^2 + y^2 + z^2 = xyz + k
\]
is as follows. If $z^2 \neq 4$ and $z^2 \neq k$, then the number of solutions over $\F_p$ is
\begin{equation} \label{eqn:level-z}
p - \legendre{z^2-4}{p}
\end{equation}
If $z^2 = 4$, then the number is
\begin{equation} \label{eqn:level-2}
\left( 1 + \legendre{k-4}{p} \right) p
\end{equation}
which is either $0$, $p$, or $2p$.
If $z^2 = k \neq 4$, then the number is
\begin{equation} \label{eqn:level-sqrtk}
p + \legendre{k-4}{p} (p-1)
\end{equation}
either $1$ or $2p-1$. 
The total number of solutions $(x,y,z)$ is
\begin{equation} \label{eqn:total-k}
p^2 + \legendre{k-4}{p} \left( 3 + \legendre{k}{p} \right) p + 1 
\end{equation}
This count includes $(0,0,0)$ when $k=0$, which leaves $p^2 \pm 3p$ vertices in the graph.
\end{proposition}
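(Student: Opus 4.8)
The plan is to count solutions $(x,y)$ to $x^2+y^2+z^2=xyz+k$ for fixed $z$ by treating the equation as a quadratic in $y$ for each fixed $x$, and reading off the number of solutions from a Legendre symbol. Rewriting the equation as $y^2 - (xz)y + (x^2 + z^2 - k) = 0$, the discriminant in $y$ is
\begin{equation} \label{eqn:disc}
\Delta(x) = (xz)^2 - 4(x^2+z^2-k) = (z^2-4)x^2 - 4(z^2 - k).
\end{equation}
For each $x$, the number of $y$ solving the quadratic is $1 + \legendre{\Delta(x)}{p}$, where one should be slightly careful: the Legendre-symbol count $1+\legendre{\Delta}{p}$ gives $2$ when $\Delta$ is a nonzero square, $0$ when $\Delta$ is a nonzero nonsquare, and $1$ when $\Delta=0$, which is exactly the correct count of roots of a monic quadratic over $\F_p$ (here $p$ is odd so the leading coefficient $1$ is a unit). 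Hence the total number of solutions is $\sum_x \left(1 + \legendre{\Delta(x)}{p}\right) = p + \sum_x \legendre{\Delta(x)}{p}$, and everything reduces to evaluating the character sum $S = \sum_{x \in \F_p} \legendre{\Delta(x)}{p}$ of the quadratic polynomial $\Delta(x)$ from \eqref{eqn:disc}.

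The three cases of the proposition correspond to the three possible behaviours of $\Delta$ as a polynomial in $x$. First, if $z^2 \neq 4$, then $\Delta(x)$ is a genuine quadratic in $x$ with leading coefficient $z^2-4$, and I would invoke the standard evaluation of $\sum_x \legendre{ax^2+bx+c}{p}$: when $a \neq 0$ this sum equals $-\legendre{a}{p}$ if the discriminant (in $x$) $b^2-4ac$ is nonzero, and $(p-1)\legendre{a}{p}$ if that discriminant vanishes. Here $b=0$ and the relevant discriminant is proportional to $a \cdot (z^2-k)$, so it vanishes precisely when $z^2 = k$. Thus for $z^2 \neq 4$ and $z^2 \neq k$ I get $S = -\legendre{z^2-4}{p}$, giving \eqref{eqn:level-z}, while for $z^2 = k \neq 4$ I get $S = (p-1)\legendre{z^2-4}{p} = (p-1)\legendre{k-4}{p}$, giving \eqref{eqn:level-sqrtk}. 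Second, if $z^2 = 4$, then $\Delta(x) = -4(z^2-k) = 4(k-4)$ is a constant, and the sum is simply $p \legendre{4(k-4)}{p} = p\legendre{k-4}{p}$, so the count is $p + p\legendre{k-4}{p} = \left(1 + \legendre{k-4}{p}\right)p$, which is \eqref{eqn:level-2}.

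For the total over all $z$, I would sum the per-$z$ counts, treating separately the generic values of $z$ and the special values $z^2=4$ and $z^2=k$. The generic contribution is $\sum_z \left(p - \legendre{z^2-4}{p}\right)$, which requires the companion character sum $\sum_z \legendre{z^2-4}{p}$; by the same quadratic-sum formula (with nonzero $x$-discriminant $16$) this equals $-1$, and I must then correct for the special values of $z$ where the generic formula does not apply by adding back the differences between the true counts \eqref{eqn:level-2}, \eqref{eqn:level-sqrtk} and the generic expression \eqref{eqn:level-z}. Assembling these corrections and simplifying the resulting Legendre symbols (using multiplicativity and the values of $\legendre{-1}{p}$, $\legendre{k}{p}$) should collapse to the closed form \eqref{eqn:total-k}. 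The main obstacle I anticipate is purely bookkeeping rather than conceptual: one must keep careful track of how many values of $z$ satisfy $z^2=4$ (namely $z=\pm 2$) and $z^2=k$ (namely $0$, $1$, or $2$ values depending on $\legendre{k}{p}$), and ensure the overlap case $k=4$ and the coincidences among these special $z$ are not double-counted, which is exactly where the factor $\left(3 + \legendre{k}{p}\right)$ and the extra $+1$ in \eqref{eqn:total-k} must be made to emerge correctly.
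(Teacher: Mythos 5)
Your proposal is correct and follows essentially the same route as the paper: fix $z$, count roots of the quadratic in $y$ via $1+\legendre{\Delta(x)}{p}$, and evaluate the resulting quadratic character sum in $x$ (the paper uses the special case $\sum_x\legendre{x^2-c}{p}=-1$ where you quote the general $\sum_x\legendre{ax^2+bx+c}{p}$ formula, which amounts to the same thing), then sum over $z$ with the same bookkeeping for $z^2=4$ and $z^2=k$. The final assembly you leave as bookkeeping is carried out no more explicitly in the paper itself, so there is no substantive gap.
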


The special cases $z^2=4$ and $z^2=k$ correspond to lines contained in the Markoff cubic surface:
\[
z^2=4 \implies x = \pm y \pm \sqrt{k-4}, \quad
z^2=k \implies x = \left( \sqrt{\frac{k}{4}} + \sqrt{\frac{k-4}{4}} \right)y.
\]
The conic associated to each of the four level sets $z = \pm 2, \pm \sqrt{k}$ is a pair of lines. Setting $x$, $y$, or $z$ equal to any of these levels, we obtain up to 24 of the famous 27 lines on a cubic surface \cite{cayley}. 
The remaining 3 lines on the Markoff surface (over an algebraic extension) are ``at infinity" in projective space. 
Depending on whether $k$ and $k-4$ are quadratic residues, these lines might only become visible in an extension of $\F_p$. Moreover, some of the lines coalesce in the singular cases $k=0$ and $k=4$. 
For $k=0$, a basic difference between $3 \bmod 4$ and $1 \bmod 4$ is that none of the lines are defined over the ground field if $p \equiv 3 \bmod 4$. 
In case $p \equiv 1 \bmod 4$, Figure~\ref{fig:1mod4} shows how to deduce non-planarity from the arrangement of lines in the Markoff surface.

Before turning to the proof of Proposition~\ref{prop:level-counts}, it is worth noting an interpretation of the quadratic symbol in (\ref{eqn:level-z}). The change of variable $z= \zeta + \zeta^{-1}$ has (one-to-two) inverse
\[
\zeta = \frac{ z \pm \sqrt{z^2 - 4} }{2}
\]
so it is the quadratic status of $z^2-4$ that determines whether $\zeta$ lies in $\F_p$ or an extension. 
This change of variable plays a decisive role in the analysis of \cite{BGS}, as we will see in Sections~\ref{sec:alt} and \ref{sec:cage}. Conceptually, if $z$ is the trace of a matrix in ${\rm SL}_2$, then $\zeta$ and $\zeta^{-1}$ are the eigenvalues.

\begin{proof}[Proof of Proposition~\ref{prop:level-counts}]
We fix $z$ and sum the number of solutions $y$ for each $x$. There are 0, 1, or 2 solutions according to the radical that arises in solving the Markoff equation for $y$:
\[
y = \frac{xz \pm \sqrt{x^2 z^2 - 4(x^2+z^2-k)}}{2}
\]
The number of solutions $(x,y)$ is then
\[
\sum_x \left( 1 + \legendre{ x^2(z^2-4) - 4(z^2-k) }{p} \right)
\]
If $z^2=4$, then the summand does not depend on $x$, and one obtains (\ref{eqn:level-2}) since $-4(z^2-k)=4(k-4)$ differs from $k-4$ by a square.
If $z^2=k$, then
\[
\sum_x \left( 1 + \legendre{ x^2(z^2-4) - 4(z^2-k) }{p} \right) = p + \legendre{k-4}{p} \sum_x \legendre{x^2}{p}
\]
and (\ref{eqn:level-sqrtk}) follows since $\legendre{x^2}{p}$ is 0 for $x=0$ and 1 for every other term.

In the remaining cases, the number of solutions is
\[
p + \legendre{z^2-4}{p} \sum_x \legendre{ x^2 - 4(z^2-k)/(z^2-4) }{p}
\]
so (\ref{eqn:level-z}) follows from a convenient fact about quadratic residues: for any non-zero shift $c \neq 0$, 
\begin{equation} \label{eqn:shifted-squares}
\sum_x \legendre{x^2-c}{p} = -1.
\end{equation}
This is a standard fact that can be shown for $c=t^2$ by factoring $x^2-c=(x-t)(x+t)$. The terms $x = \pm t$ contribute 0, and if $x \neq t$, then $(x-t)$ and $(x-t)^{-1}$ are squares or not together. We may then change variable to $u = \frac{x+t}{x-t}$ and obtain a complete character sum missing only $u = 0, 1$ since, in the projective line, $x \neq t, -t, \infty$ corresponds to $u \neq \infty, 0, 1$. This proves (\ref{eqn:shifted-squares}) in case $c$ is a quadratic residue. The sum only depends on whether $c$ is a quadratic residue, so the common value among non-residues can then be obtained by subtraction.
Indeed, changing the order of summation gives $\sum_{c \neq 0} \sum_x \legendre{x^2-c}{p} = \sum_x \sum_{c \neq 0} \legendre{x^2-c}{p}= -(p-1)$, so the value for $c$ not a square must also be $-1$ as in (\ref{eqn:shifted-squares}).

We use this fact once again to sum over $z$ and deduce (\ref{eqn:total-k}). The total is
\[
\sum_{z^2 \neq 4,k} \left( p - \legendre{z^2-4}{p} \right) + \left( 1 + \legendre{k}{p} \right)\left(p + \legendre{k-4}{p}(p-1) \right) + \left( 1 + \legendre{k-4}{p} \right) p
\]
which simplifies as claimed upon collecting the terms in $p^2$, $p$, and $1$. 
\end{proof}

Part (2) of Lemma~\ref{lem:counts} restates the following result of \cite[Lemma 2.3]{CGMP}. We give a proof to highlight a special property of $k=0$ compared to other levels, and to confirm that only $(0,0,0)$ has multiple self-edges.
\begin{proposition}[Cerbu-Gunther-Magee-Peilen, \cite{CGMP}] \label{prop:selfies}
The number of non-zero fixed points of $m_1$ on $x^2+y^2+z^2=xyz$ is
\[
p-4 - \legendre{-1}{p}
\]
and only $(0,0,0)$ is fixed by more than one Markoff move.
\end{proposition}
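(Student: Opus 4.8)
The plan is to characterize the fixed points of $m_1$ geometrically and then count them with the shifted-square identity (\ref{eqn:shifted-squares}) already used in Proposition~\ref{prop:level-counts}. Viewing the Markoff equation as a quadratic in $x$,
\[
x^2 - yz\,x + (y^2+z^2) = 0,
\]
the two roots sum to $yz$, and the move $m_1$ sends one root $x$ to the other root $yz-x$. Hence $(x,y,z)$ is fixed by $m_1$ if and only if this quadratic has a double root, i.e. its discriminant vanishes: $(yz)^2 = 4(y^2+z^2)$, with $x = yz/2$. I would record the pleasant factorization
\[
(y^2-4)(z^2-4) = y^2z^2 - 4y^2 - 4z^2 + 16 = 16,
\]
so that fixed points of $m_1$ correspond bijectively (via $x = yz/2$) to solutions $(y,z)$ of $(y^2-4)(z^2-4)=16$.

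To count these, I would stratify by the value $u = y^2 - 4$, which forces $z^2 - 4 = 16/u$, and sum the product of the number of square roots on each factor:
\[
N = \sum_{u \neq 0} \left(1 + \legendre{u+4}{p}\right)\left(1 + \legendre{16/u + 4}{p}\right).
\]
The key simplification is $16/u + 4 = 4(u+4)/u$, whence $\legendre{16/u+4}{p} = \legendre{u}{p}\legendre{u+4}{p}$ for $u \neq 0, -4$; expanding the product (using that the square of a nonzero Legendre symbol is $1$) turns the summand into $1 + \legendre{u}{p} + \legendre{u+4}{p} + \legendre{u(u+4)}{p}$. Each of the three character sums is elementary: $\sum \legendre{u}{p}$ and $\sum\legendre{u+4}{p}$ are complete sums minus one or two excluded terms, while $\sum \legendre{u(u+4)}{p}$ becomes $\sum_s \legendre{s^2-4}{p} = -1$ after completing the square $u(u+4) = (u+2)^2 - 4$ and invoking (\ref{eqn:shifted-squares}). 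Collecting everything, and treating the degenerate value $u=-4$ that contributes only the pair $(y,z)=(0,0)$, gives $N = p - 3 - \legendre{-1}{p}$. Since $(0,0)$ corresponds to the excluded origin, the number of non-zero fixed points of $m_1$ is $N - 1 = p - 4 - \legendre{-1}{p}$, as claimed.

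For the second assertion, suppose a point is fixed by two moves, say $m_1$ and $m_2$, so that $yz = 2x$ and $xz = 2y$. Multiplying the first equation by $x$ and the second by $y$ gives $xyz = 2x^2 = 2y^2$, so $x = \pm y$; substituting back forces either $x=y=0$ (hence the origin) or $z = \pm 2$, and in the latter case the Markoff equation collapses to $4 = 0$. For odd $p$ this is impossible, so only $(0,0,0)$ is fixed by more than one move. The symmetry among $m_1, m_2, m_3$ handles the other pairs, and being fixed by all three in particular means being fixed by some pair.

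I expect the main obstacle to be the careful bookkeeping of the degenerate values in the character-sum count — in particular isolating the term $u=-4$ that produces the origin, and checking that the excluded points $s = \pm 2$, where $s^2 - 4 = 0$, do not alter the sum (\ref{eqn:shifted-squares}). The factorization $(y^2-4)(z^2-4)=16$ is the crucial simplifying observation: without it the count is messier, and with it the whole computation reduces to the single shifted-square identity.
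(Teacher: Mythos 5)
Your proposal is correct and follows essentially the same route as the paper: both characterize the fixed points of $m_1$ by $x=yz/2$, reduce the count to a character sum, and evaluate it with the shifted-squares identity (\ref{eqn:shifted-squares}), and both dispose of doubly-fixed points by the same elementary algebra forcing $x=y=0$ or $z=\pm 2$ (the latter impossible on $k=0$). Your symmetric factorization $(y^2-4)(z^2-4)=16$ is a pleasant repackaging of the paper's relation $y^2=4z^2/(z^2-4)$, and your sum over $u=y^2-4$ with four character sums evaluates to the same total $p-3-\legendre{-1}{p}$ before excluding the origin.
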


In particular, the Markoff graph mod 5 has no self-edges. It is drawn in Figure~\ref{fig:5}.

\begin{proof}
The fixed points under $x \mapsto yz-x$ are given by $x = yz/2$. Substituting this into the Markoff equation yields
\[
y^2 \left(1 - \frac{z^2}{4} \right) + z^2 = k.
\]
If $z^2=4$, then necessarily $k=4$. In this case, the fixed points are $(x,x,2)$ and $(x,-x,-2)$ with $x$ arbitrary, and symmetrically $(x,2,x)$ or $(x,-2,-x)$ with the second and third coordinates exchanged.
Assuming $z^2 \neq 4$, we solve for $y$ as:
\begin{equation} \label{eqn:hyperelliptic}
y^2 = 4 \frac{z^2 - k}{z^2 - 4}
\end{equation}
The number of solutions is then a character sum, as before:
\[
\sum_{z^2 \neq 4} \left( 1 + \legendre{ (z^2-k)(z^2-4)^{-1}}{p} \right) = p-2 + \sum_z \legendre{(z^2-k)(z^2-4)}{p} 
\]
where the sum can now be taken over all $z$, with no contribution from $z^2=4$. 
For $k=0$, the factor $z^2-k$ is always a square, with Legendre symbol 0 for $z=0$ or 1 otherwise.
We account for $z=0$ separately, and the sum over all $z$ is given by (\ref{eqn:shifted-squares}) again:
\[
\sum_{z \bmod p} \legendre{(z^2-k)(z^2-4)}{p} = \sum_{z \neq 0} \legendre{z^2-4}{p} = -1 - \legendre{-1}{p}
\]
The total number of fixed points is then $p-3 - \legendre{-1}{p}$, or just $p-4-\legendre{-1}{p}$ excluding $(0,0,0)$. 
For $k \neq 0$ or 4, the number of fixed points is dictated by a curve (\ref{eqn:hyperelliptic}) of genus 1, which in our case degenerates to a conic.

Finally, suppose $(x,y,z)$ is fixed by both $m_1$ and $m_2$. Then $x=yz/2$ and $y=xz/2$, which implies that $y=yz^2/4$, so either $y=0$ or $z^2=4$. If $y=0$, then also $x=yz/2=0$, leaving only $(0,0,\sqrt{k})$, or just $(0,0,0)$ in the case $k=0$. If $z^2 = 4$, which is possible only for $k=4$, then the fixed points are $(x,x,2)$ and $(x,-x,-2)$.
\end{proof}

The proof of Theorem~\ref{thm:euler-char} uses the following result from \cite[Corollary 6.2]{dci-magee}.
\begin{theorem}[de Courcy-Ireland and Magee, \cite{dci-magee}] \label{thm:kesten-mckay}
There is an absolute constant $C > 0$ such that any reduced word of length $L$ in $m_1$, $m_2$, $m_3$ has at most $C^Lp$ fixed points.
\end{theorem}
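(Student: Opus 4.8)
The plan is to regard each move $m_i$ as a polynomial automorphism of the affine surface $S\colon x^2+y^2+z^2=xyz$ and to control the fixed locus $\Fix(w)\cap S$ of a reduced word $w=m_{i_L}\cdots m_{i_1}$ by combining degree growth with elementary point counting. First I would track the degrees of the coordinate functions of $w=(w_x,w_y,w_z)$. Each move overwrites one coordinate by a product of the other two minus its old value, e.g. $x\mapsto yz-x$, so the new degree is the sum of the degrees of the two unchanged coordinates. Because the word is reduced, one never overwrites the coordinate just made large, so the degrees strictly increase and grow at most like $2^L$ (in fact Fibonacci-like, $\sim\phi^L$). Crucially, writing $\mathrm{lf}(\cdot)$ for the top-degree part, each leading form is a single monomial with coefficient $\pm1$, namely the product of the two incoming leading monomials; this is what will let the argument survive reduction modulo $p$.

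Granting that $\Fix(w)\cap S$ is a \emph{proper} subvariety of $S$, the bound is a routine count. Choosing a coordinate with $w_x\not\equiv x$ on $S$, the one-dimensional part of $\Fix(w)\cap S$ lies in $\{F=0\}\cap\{w_x-x=0\}$ inside $\mathbb{A}^3$, a proper closed subset of the irreducible surface $S$ and hence a curve of degree at most $3d$ with $d\le 2^L$; projecting to a coordinate axis and bounding fibres (finitely many vertical lines, each $\le p$ points, and otherwise $\le 3d$ points over each of $p$ base values) gives $O(d\,p)=O(2^L p)$ points. The isolated points of $\Fix(w)$ are isolated solutions of the three equations $F=0$, $w_x-x=0$, $w_y-y=0$ of degrees $3,d,d$, so they number at most $3d^2\le 3\cdot 4^L$ by Bezout. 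Both contributions are $\le C^L p$ for an absolute $C$, and this step is uniform in $p$ with no exceptional primes.

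The crux is therefore to show, \emph{uniformly in $p$}, that a nontrivial reduced word does not restrict to the identity on $S$, i.e. that $w_x-x\notin(F)$ in $\F_p[x,y,z]$ for some coordinate. Here I would use the leading-monomial structure above together with the observation that the very first move $m_{i_1}$ erases its own variable from every coordinate's leading form: after $m_1$, for instance, the supports of $\mathrm{lf}(w_x),\mathrm{lf}(w_y),\mathrm{lf}(w_z)$ are $\{y,z\},\{y\},\{z\}$, so $x$ is absent everywhere, and since every subsequent update only takes unions of existing supports, $x$ can never reappear. Consequently each $\mathrm{lf}(w_j)$ is a monomial missing the variable of the first move, hence not divisible by $\mathrm{lf}(F)=-xyz$; were $w_x-x=FG$ we would need $\mathrm{lf}(w_x)$ divisible by $xyz$, a contradiction. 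As the leading coefficient is $\pm1$, this persists modulo every prime, so $\Fix(w)\cap S\neq S$ for all $p$ and all $L\ge1$. I expect this leading-form bookkeeping — establishing non-triviality of the restriction with no exceptional primes, despite the singularity of $S$ at the origin — to be the main obstacle; a heavier but cleaner alternative, and the route to a sharp constant, is to pass through the ${\rm SL}_2(\F_p)$ trace parametrization $z=\zeta+\zeta^{-1}$ noted above, where fixing $w$ becomes an explicit relation on a pair of matrices and the factor $p$ appears as the single free eigenvalue parameter.
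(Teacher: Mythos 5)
The paper does not actually prove Theorem~\ref{thm:kesten-mckay}: it imports it verbatim from \cite{dci-magee} (Corollary 6.2 there), where the count is carried out through the ${\rm SL}_2(\F_p)$ trace parametrization $z=\zeta+\zeta^{-1}$ --- essentially the ``heavier alternative'' you name in your final sentence, which is also what yields the asymptotic Kesten--McKay statement rather than a mere upper bound. Your argument is a genuinely different, self-contained route through elementary algebraic geometry, and it is correct. The two load-bearing points both check out: (i) the inductive invariant that after a move $m_i$ the $i$-th degree equals the sum of the other two forces the next (distinct) move to overwrite a coordinate with a product of strictly larger degree, so the leading forms never cancel and stay monomials with coefficient $+1$ over every $\F_p$ --- this is exactly where reducedness enters, since a repeated letter would cancel the top term identically; and (ii) every leading form omits the first move's variable, hence is not divisible by $xyz=-\mathrm{lf}(F)$, so $F\nmid w_j-x_j$ for any coordinate $j$ the word modifies, uniformly in $p$. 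Three details you assert without proof should be recorded to make this airtight: the absolute irreducibility of $x^2+y^2+z^2-xyz$ over $\overline{\F}_p$ for every $p$, including $p=2$ (needed so that vanishing on $S$ is equivalent to divisibility by $F$ and so that $V(F)\cap V(w_j-x_j)$ automatically has dimension at most $1$), which follows by comparing the coefficient of $y^2$ in a putative factorization $\ell Q$ with $\ell_1Q_2=-xyz$; the standard bound $\#X(\F_p)\le \deg(X)\,p^{\dim X}$ for the resulting curve; and the observation that the entire fixed locus, isolated points included, already lies in the purely one-dimensional set $V(F,w_j-x_j)$, so your separate B\'ezout count of isolated points is redundant rather than wrong. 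What your approach buys is uniformity in $p$ with no exceptional primes, a far better constant (roughly $6\cdot 2^L$ against the $2^{16L+10}$ quoted after the theorem), and immediate generalization to the other levels $x^2+y^2+z^2=xyz+k$, since the leading form of $F$ is unchanged; what it gives up is the finer information --- which words have $\sim p$ fixed points and which have $O(1)$ --- that the trace-coordinate method supplies and that \cite{dci-magee} needs for its main result.
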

The constant from \cite{dci-magee} is explicit. For instance, one could take $2^{16L+10}$ in place of $C^L$. 
Recall that a word is reduced if there are no trivial cancellations such as $m_1$ appearing twice in a row.

\section{Euler characteristic and main strategy} \label{sec:euler}

In this section, we prove Lemma~\ref{lem:planar} and Theorem~\ref{thm:euler-char}. We begin the proof of Theorem~\ref{thm:main}, assuming the Markoff graph mod $p$ is connected and using Lemma~\ref{lem:counts}.
We complete the calculations with short cycles in the following sections, and address the possibility of disconnected Markoff graphs in Section~\ref{sec:cage}. 
These arguments prove Theorem~\ref{thm:main} except perhaps for $p=11, 13$. In those cases, the graph is shown to be non-planar using Theorem~\ref{thm:7} for $p=11$ or Theorem~\ref{thm:mod4} for $p=13$.

A planar drawing of a graph divides the plane into connected regions, called faces. Euler's formula states that for a connected graph with $V$ vertices and $E$ edges, dividing the plane into $F$ faces,
\[
V - E + F = 2.
\]
We recommend \cite[Chapter 7]{CBG} as an introduction to Euler characteristic.

To prove Lemma~\ref{lem:planar}, we count the number of pairs $(v,f)$ where a vertex $v$ lies on the boundary of a face $f$. Let us write $v \sim f$ for this incidence relation. By hypothesis, each face has at least $g$ vertices on its boundary. On the other hand, each vertex borders one face for each edge incident to it (with the caveat, for faces incident to a vertex whose removal would disconnect the graph, of counting with multiplicity equal to the number of edges). 
It follows that
\[
gF \leq \sum_f \sum_v \mathbbm{1}[v \sim f] = \sum_v \deg(v) = 2E
\]
since every edge is counted twice, once for each endpoint.
Solving Euler's formula for $F=E-V+2$, we find $gE - g(V-2) \leq 2E$, and therefore
\[
E \leq \frac{g}{g-2} (V-2).
\]
This completes the proof of Lemma~\ref{lem:planar}.

\begin{proof}[Proof of Theorem~\ref{thm:euler-char}]
More generally, for a graph embedded in a surface of Euler characteristic $\chi$, we would have $V-E+F=\chi$.
By the same argument as above,
\[
E \leq \frac{g}{g-2} (V- \chi)
\]
To prove Theorem~\ref{thm:euler-char}, we think of this as a bound for $\chi$ rather than for $E$:
\[
\frac{g-2}{g} E - V \leq - \chi
\]
Since the Markoff graphs mod $p$ do have some short cycles, it is worth introducing some correction terms in order to take a larger value of $g$. Let $n_L$ be the number of faces of length $L$.
Counting pairs $(v,f)$ as above leads to
\[
gF - \sum_{L < g} (g-L) n_L \leq 2E
\]
since most faces are incident to at least $g$ vertices, with a deficit $g-L$ for the shorter faces.
Substituting $F=E-V+\chi$ into this gives
\begin{equation} \label{eqn:euler-inequality}
(g-2)E - gV - \sum_{L<g} (g-L)n_L \leq -g\chi .
\end{equation}

For the Markoff graph mod $p$, or a connected component of it, 
\begin{equation} \label{eqn:edge-lower-bound}
E \geq \frac{3}{2} V - \frac{3}{2} \left( p - 4 - \legendre{-1}{p} \right)
\end{equation}
with equality if the component contains all the self-edges from Lemma~\ref{lem:counts}, part (2).
In all likelihood, the Markoff graph mod $p$ is connected, but in any event our arguments can be applied to a sufficiently large component. This might be of interest for other surfaces $x^2+y^2+z^2=xyz+k$. The component must have at least $p$ vertices in order for (\ref{eqn:edge-lower-bound}) to give a positive number of edges.

With the number of edges $E$ bounded from (\ref{eqn:edge-lower-bound}), inequality (\ref{eqn:euler-inequality}) becomes
\begin{equation} \label{eqn:v-bound}
\frac{1}{2} \left( 1 - \frac{6}{g} \right)V - \frac{1}{2} \left( 3 - \frac{6}{g} \right)\left(p-4-\legendre{-1}{p}\right) - \sum_{L<g} \left( 1 - \frac{L}{g} \right) n_L \leq -\chi
\end{equation}
We will choose $g = \delta \log{p}$ for a sufficiently small $\delta$, or rather a nearby integer $\lfloor \delta \log{p} \rfloor$.
This ensures that there are few short faces, by Theorem~\ref{thm:kesten-mckay}. Each face is outlined by a word in the Markoff moves $m_1$, $m_2$, $m_3$, up to cyclic ordering, and identifying a word and its inverse as the two orientations of the face. The word, or one of its cyclic shifts, fixes the vertices on the boundary of the face. 
It follows from Theorem~\ref{thm:kesten-mckay} that
\[
\sum_{L<g} \left(1 - \frac{L}{g} \right) n_L \leq \sum_{L<g} C^L p \lesssim p^{1+\varepsilon}
\]
for any desired $\varepsilon > 0$, if $\delta$ is chosen small enough. We write $\lesssim$ for inequality up to a constant multiple, independent of $p$, but perhaps depending on $\varepsilon$. 
Note that the value of $C$ is not the same as before: we multiply by the number of reduced words of length $L$, which is roughly $2^L$, effectively enlarging the previous bound $2^{16L+10}$ to $2^{17L+10}$.

For the giant component of Bourgain-Gamburd-Sarnak, which contains almost all the vertices, we have $V \sim p^2$. The remaining terms are negligible in comparison:
\[
\frac{1}{2} \left( 1 - O\left( \frac{1}{\log{p} } \right) \right) p^2 + O\left( p^{1+\varepsilon} \right) \leq - \chi
\]
Theorem~\ref{thm:euler-char} follows.
\end{proof}

We can now prove Theorem~\ref{thm:main}, assuming the Markoff graph mod $p$ is connected.
It simplifies the calculations if the graph is connected, but in any case, Section~\ref{sec:cage} gives an unconditional proof showing that the giant component is not planar.
For the proof of Theorem~\ref{thm:main}, we take $g=7$ rather than $g \approx \log{p}$, and estimate $n_L$ directly for all $L \leq 6$. 
Let $s$ be the number of square faces and $h$ the number of hexagons -- for any drawing of the graph, these are at most the number of 4-cycles or 6-cycles.
As we will see in Section~\ref{sec:words}, there are no triangles or pentagons. There are then $g$ vertices per face, with a deficit of 1 for each hexagon, and 3 for each square. The key inequality (\ref{eqn:v-bound}) becomes
\[
\frac{1}{2} \left( 1 - \frac{6}{7} \right)V - \frac{1}{2} \left( 3 - \frac{6}{7} \right)\left(p-4-\legendre{-1}{p}\right) - \frac{h+3s}{7} \leq -\chi
\]
We think of this as an upper bound for $V$, which cannot hold once $p$ is large enough. Recall that $\chi=2$ for the plane:
\begin{equation} \label{eqn:p-bound}
V \leq 15 \left(p-4-\legendre{-1}{p}\right) + 2h + 6s -28
\end{equation}

In view of Lemma~\ref{lem:counts}, we consider four cases depending on $p$ modulo 3 and 4. The cases are $p \equiv 1, 5, 7, 11 \bmod 12$. 

If $p \equiv 1 \bmod 12$, then (at the most, for any drawing) the number of squares is $s=3(p-1)/2$ and the number of hexagons is $h=p-3$. The inequality (\ref{eqn:p-bound}) becomes
\begin{equation} \label{eqn:1mod12}
V \leq 26p - 118
\end{equation}
Assuming the Markoff graph mod $p$ is connected, we can take $V=p^2+3p$ and solve the quadratic inequality $p^2+3p \leq 26p-118$. For $p=13$, it does hold in the form $208 \leq 220$, so this case warrants a separate argument. The next example of this form is $p=37$, and already non-planarity follows because
\[
p^2+3p=1480 > 844 = 26p-118.
\]

If $p \equiv 5 \bmod 12$, the number of squares is $s=3(p-1)/2$, and the number of hexagons is $h=p+1$. 
The inequality (\ref{eqn:p-bound}) becomes
\begin{equation} \label{eqn:5mod12}
V \leq 26p - 110
\end{equation}
If one knew $V=p^2+3p$, planarity would be possible only for
\[
6.78\ldots = \frac{23-\sqrt{89}}{2} \leq p \leq \frac{23+\sqrt{89}}{2} = 16.21\ldots
\]
Even 5 and 17, the smallest primes of this form, therefore have non-planar Markoff graphs.
%d=23^2-4*110
% 89
% (23+sqrt(89))/2
% 16.216990566028301905660330188811320359
%(23-sqrt(89))/2
% 6.7830094339716980943396698111886796415

If $p \equiv 7 \bmod 12$, there are no squares and the number of hexagons is $p-3$. 
The inequality (\ref{eqn:p-bound}) becomes
\begin{equation} \label{eqn:7mod12}
V \leq 17p - 79
\end{equation}
Assuming the Markoff graph mod $p$ is connected, we can take $V=p^2 - 3p$ since $p \equiv 3 \bmod 4$ in this case. The inequality is already impossible for $p=19$, the first candidate after $p=7$ in this progression mod 12. Indeed, the larger root of $p^2-3p = 17p-79$ is
\[
10 + \sqrt{21} \approx 14.58\ldots
\]
%14.582575694955840006588047193728008489

If $p \equiv 11 \bmod 12$, there are no squares and the number of hexagons is $p+1$.
The inequality (\ref{eqn:p-bound}) becomes
\begin{equation} \label{eqn:11mod12}
V \leq 17p - 71
\end{equation}
Assuming connectedness, this inequality shows non-planarity for 
\[
p > 10+\sqrt{29} = 15.38\ldots
\]
The first prime $p=11$ in this progression requires special treatment. For example, modulo 11, we have $-7 = 4 = 2^2$, so non-planarity follows from Theorem~\ref{thm:7}.
The other case left after the arguments above is $p=13$, which has a non-planar Markoff graph by Theorem~\ref{thm:mod4}.

%10+sqrt(29)
% 15.385164807134504031250710491540329556

\section{Short words} \label{sec:words}

In this section, we advance the proof of Lemma~\ref{lem:counts} by identifying which words in $m_1$, $m_2$, $m_3$ can possibly bound a face of 6 sides or less. 
Up to a permutation of the coordinates, we may assume the word's first move is $m_1$, followed by $m_2$. We need only consider reduced words, where none of the involutions $m_1$, $m_2$, or $m_3$ occurs twice in a row.
It is convenient to omit the $m$'s, simply writing $j$ for $m_j$.
The words of length up to 6 are then
\begin{align*}
1 \\
21 \\
121, 321 \\
2121, 3121, 1321, 2321 \\
12121, 32121, 13121, 23121, 21321, 31321, 12321, 32321 \\
212121, 232121, 313121, 323121, 321321, 231321, 312321, 132321 \\
312121, 132121, 213121, 123121, 121321, 131321, 212321, 232321
\end{align*}

We can immediately discard words where some move occurs only once, such as 232321. These do not give new faces in the Markoff graph, but simply add a self-edge somewhere along a face that has already been counted. 

Likewise, there is no contribution from words that have a shorter conjugate. The fixed points of $w^{-1}w_0w$ do not yield new faces in the Markoff graph. Instead, one applies $w$ to the fixed point, traverses a face bounded by $w_0$, and returns along the same path. 

After deleting words with a lone letter or a shorter conjugate, we are left with
\[
2121, \quad 212121, \quad 321321,
\quad 323121, \quad 231321, \quad 312321
\]
The last three are equivalent to each other under cyclic shifts and permutations of the coordinates:
\begin{align*}
312321 \sim 131232 \\
231321 \sim 123132 \sim 212313
\end{align*}
where the rightmost words have the same structure as 323121 up to a permutation. We will see in Section~\ref{sec:323121} that these words do not bound any faces. The remaining cases 2121, 212121, and 321321 will be treated in Sections~\ref{sec:321321} and \ref{sec:alt}, completing the proof of Lemma~\ref{lem:counts}.

\section{Fixed points of 323121} \label{sec:323121}

\begin{proposition} \label{prop:323121}
The fixed points of $m_3m_2m_3m_1m_2m_1$ on the Markoff surface $x^2+y^2+z^2=xyz$ are the triples $(x,y,z)$ satisfying
\[
x^4-5x^2 +8 = 0, \quad z = \pm x, \quad y = \frac{xz}{x^2-2}
\]
together with $(0,0,0)$.
These do not correspond to faces in the Markoff graph mod $p$. Instead, there are self-edges $m_2$ at both neighbours of the fixed point $(x,y,z)$ under $m_1$ and $m_3$.
\end{proposition}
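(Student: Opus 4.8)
The plan is to compute the fixed-point equation of the word $w=m_3m_2m_3m_1m_2m_1$ directly by composing the six Markoff moves, and then to show the resulting variety is exactly the one in the statement. Since each $m_j$ is an affine involution in the coordinate it changes, the composition $w$ acts as a polynomial (in fact rational, once we use the Markoff relation to re-substitute) map on $(x,y,z)$. First I would apply $m_1$, then $m_2$, then $m_1$ again, keeping track of how each coordinate updates: $m_1:(x,y,z)\mapsto(yz-x,y,z)$, then $m_2$ changes the middle coordinate using the new first coordinate, and so on. Because we seek fixed points, I would set the final triple equal to $(x,y,z)$ and extract the resulting polynomial conditions, simplifying throughout using $x^2+y^2+z^2=xyz$ so as to stay on the surface. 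I expect the three conditions $z=\pm x$, $y=xz/(x^2-2)$, and $x^4-5x^2+8=0$ to drop out after eliminating variables; the symmetry $z=\pm x$ should appear because the word begins and ends with moves on the first and third coordinates in a balanced way.

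The second, and more conceptual, step is to explain \emph{why these fixed points do not bound genuine hexagonal faces} in the Markoff graph. The claim in the statement is that at a fixed point $(x,y,z)$ of $w$, the move $m_2$ is a self-edge at both of the neighbours obtained by $m_1$ and by $m_3$. Concretely, I would check that $m_2$ fixes $m_1(x,y,z)$ and also fixes $m_3(x,y,z)$: by the earlier fixed-point analysis (the proposition of Cerbu--Gunther--Magee--Peilen), $m_2$ fixes a point exactly when its middle coordinate equals half the product of the outer two. So I would verify, for $m_1(x,y,z)=(yz-x,y,z)$, that $y=(yz-x)z/2$, and similarly at the $m_3$-neighbour. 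Using $y=xz/(x^2-2)$ and $z^2=x^2$ together with $x^4-5x^2+8=0$, these identities should reduce to the quartic and hence hold. The upshot is that what looks like a length-6 closed word $323121$ actually traverses a shorter cycle decorated with self-edges $m_2$, so it contributes no new face, consistent with the reduction carried out in Section~\ref{sec:words}.

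The \textbf{main obstacle} I anticipate is purely computational: composing six involutions and repeatedly reducing modulo the cubic relation is error-prone, and the intermediate rational expressions (with denominators like $x^2-2$) can be unwieldy. The cleanest route is probably to avoid brute symbolic expansion by exploiting the structure: note that $m_1$ and $m_3$ commute (they act on disjoint coordinates), so $w=m_3m_2m_3\cdot m_1m_2m_1$ is a product of two conjugates of $m_2$, namely $w=(m_3m_2m_3)(m_1m_2m_1)$. Each factor $m_im_2m_i$ is again an involution, so $w$ is a product of two involutions and its fixed points are governed by a single trace-like invariant. I would use this to reduce the fixed-point condition to a one-variable equation in $x$, which should be the quartic $x^4-5x^2+8=0$; this is where the characteristic-$p$ subtleties (whether the quartic has roots mod $p$, whether $x^2=2$ is excluded) must be handled so that the denominator $x^2-2$ never vanishes. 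Finally, I would confirm the degenerate solution $(0,0,0)$ arises separately and is the only fixed point not satisfying the generic equations, closing the case.
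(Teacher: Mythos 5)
Your proposal is correct and matches the paper's proof in its essential mechanism: the paper likewise exploits that $m_3m_2m_3$ and $m_1m_2m_1$ are involutions, rewriting the fixed-point condition as $m_3m_2m_3(x,y,z)=m_1m_2m_1(x,y,z)$, which yields the system $z(z(yz-x)-2y)=0$, $y(z^2-x^2)=0$, $x(x(xy-z)-2y)=0$ and hence $z=\pm x$, $(x^2-2)y=xz$, and the quartic after substituting into the Markoff equation. The only cosmetic difference is that the paper reads the self-edge conditions $z(yz-x)=2y$ and $x(xy-z)=2y$ directly off that intermediate system rather than re-verifying them from the explicit solution formulas as you suggest.
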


For example, this occurs in the Markoff graph mod 11 with $x=3$ (Figure~\ref{fig:11}). The self-edges correspond to $6 = 3 \times 4 - 6$ at $(3,6,4)$ or $(4,6,3)$. 
Over other fields, one needs to have an element
\[
x = \sqrt{ \frac{5 + \sqrt{-7}}{2} }
\]

\begin{proof}
To lower the degree of the fixed point system, note that $m_{323121}$ fixes $(x,y,z)$ if and only if
\[
m_3 m_2 m_3(x,y,z) = m_1 m_2 m_1 (x,y,z)
\]
\[
 \begin{pmatrix} x \\ x(xy-z)-y \\ x(x(xy-z)-y)-xy+z \end{pmatrix} = \begin{pmatrix} z(z(yz-x)-y)-yz+x \\ z(yz-x)-y \\  z \end{pmatrix}
\]
This simplifies to
\[
\begin{cases} z(z(yz-x)-2y) &=0 \\ y(z^2-x^2) &=0 \\ x(x(xy-z)-2y) &=0 \end{cases}
\]
Assuming $xyz \neq 0$, we find that $z^2 = x^2$ and solve for $y$ from $(x^2-2)y=xz$.
Substituting this into the Markoff equation $x^2+y^2+z^2=xyz+k$, we are left with a single-variable sextic for $x$:
\[
\frac{(x^4-5x^2 + 8)x^2}{(x^2-2)^2} = k
\]
For $k=0$, this reduces to a biquadratic equation $x^4-5x^2 + 8=0$ as claimed, assuming $x \neq 0$.
If $x = 0$, and likewise if $y$ or $z$ vanishes, then the system implies that at least two variables must vanish. The only such solutions of the Markoff equation are $(0,0,0)$ for $k \neq 0$, or more generally the permutations of $(0,0,\pm \sqrt{k})$ for other levels.

From $z(yz-x)-2y=0$, we see that $m_2$ fixes $(yz-x,y,z)$, and similarly for $(x,y,xy-z)$. This shows that there are self-edges at the neighbours of $(x,y,z)$, as claimed and completing the proof.
\end{proof}

\section{Fixed points of 321321} \label{sec:321321}

The situation here depends on whether $-3$ is a quadratic residue modulo $p$. If so, then the next proposition shows that there are four hexagons fixed by 321321 and its cyclic shifts. 
Permutations of the coordinates do not lead to any further hexagons: the permuted words are either cyclic shifts 132132 and 213213, or their inverses, which all bound the same faces.
This case accounts for the four hexagons visible in the Markoff graph mod 7 (Figure~\ref{fig:mod7}).
\begin{proposition} \label{prop:321321}
The fixed points of $m_3m_2m_1 m_3m_2m_1$ on $x^2+y^2+z^2=xyz$ are the triples $(x,y,z)$ satisfying
\[
y^2+3y+3 = 0, \quad x^2=\frac{y^2}{(y+1)^2}, \quad z=-x
\]
or
\[
y^2-3y+3 = 0, \quad x^2 = \frac{y^2}{(y-1)^2}, \quad z=x
\]
\end{proposition}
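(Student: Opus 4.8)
The plan is to impose the fixed-point condition $m_3m_2m_1m_3m_2m_1(x,y,z) = (x,y,z)$ and reduce it to a low-degree system, exactly as in the proof of Proposition~4.1 for the word 323121. The key simplification is to split the word in the middle: a point is fixed by $m_3m_2m_1m_3m_2m_1$ if and only if
\[
m_3m_2m_1(x,y,z) = (m_3m_2m_1)^{-1}(x,y,z) = m_1m_2m_3(x,y,z),
\]
using that each $m_j$ is an involution. First I would write out both sides as explicit cubic vectors in $(x,y,z)$, recording the image $m_3m_2m_1(x,y,z)$ by applying $m_1$, then $m_2$, then $m_3$ in turn, and likewise $m_1m_2m_3(x,y,z)$ on the other side. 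Setting the two triples equal yields three polynomial equations; I expect each to factor, revealing a clean factor such as $z+x$ or $z-x$ (paralleling the factor $z^2-x^2$ that appeared for 323121), which separates the solution set into the two families recorded in the statement with $z=-x$ and $z=x$.

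Next I would substitute the relation between $z$ and $x$ back into the remaining two equations and solve for $x$ in terms of $y$. The expected outcome is a linear relation in $x$, of the form $(y+1)x = \pm y z$ (respectively $(y-1)x$), which upon using $z=\mp x$ gives $x^2 = y^2/(y+1)^2$ (respectively $x^2=y^2/(y-1)^2$), matching the proposition. The final step is to feed this into the Markoff equation $x^2+y^2+z^2=xyz$ with $k=0$: since $z^2=x^2$ we get $2x^2+y^2 = x^2 y \cdot(\pm 1)$-type relations, and substituting $x^2 = y^2/(y\pm1)^2$ should collapse to the quadratic $y^2 \pm 3y + 3 = 0$ after clearing the denominator $(y\pm1)^2$ and cancelling a factor of $y^2$ (with the degenerate $y=0$, forcing $x=z=0$, handled separately as the excluded point $(0,0,0)$). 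The quadratic $y^2\pm 3y+3=0$ has discriminant $9-12=-3$, so its roots lie in $\F_p$ precisely when $-3$ is a square, consistent with the remark preceding the proposition that the existence of these hexagons is governed by the quadratic status of $-3$.

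The main obstacle I anticipate is purely organizational rather than conceptual: expanding $(m_3m_2m_1)^{\pm 1}$ produces degree-three expressions, and matching the three coordinate equations requires care to extract the correct factorizations and to track which sign choices ($z=x$ versus $z=-x$) pair with which quadratic ($y^2-3y+3$ versus $y^2+3y+3$). I would verify the sign bookkeeping by checking one explicit fixed point, for instance against the hexagons in the Markoff graph mod $7$ of Figure~\ref{fig:mod7}, where $-3 \equiv 4 = 2^2$ is a square mod $7$ and the word 321321 is advertised to account for the four visible hexagons. A secondary point worth flagging is that one should confirm these fixed points genuinely bound hexagonal faces rather than degenerating (as the fixed points of 323121 did into self-edges); but since the proposition only asserts the defining equations of the fixed-point set, establishing the polynomial system suffices for the statement itself, with the geometric interpretation deferred to the surrounding discussion.
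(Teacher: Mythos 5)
Your proposal follows essentially the same route as the paper's proof: both split the word at its midpoint using the involution property to reduce the fixed-point condition to $m_1m_2m_3(x,y,z)=m_3m_2m_1(x,y,z)$, extract the factor $y(x^2-z^2)=0$ to get $z=\pm x$, solve $y^2=(xy-z)^2=x^2(y\mp 1)^2$ for $x$, and substitute into the Markoff equation to obtain the quadratics $y^2\pm 3y+3=0$, with the degenerate case where a coordinate vanishes handled separately. The plan is sound and matches the paper in all essentials, including the observation that the sign choices pair the two families correctly.
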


\begin{proof}
The fixed points are given by $m_{321321}(x,y,z)=(x,y,z)$, or equivalently
\[
m_1 m_2 m_3 (x,y,z) = m_3 m_2 m_1(x,y,z)
\]
\[
\begin{pmatrix} (xy-z)(x(xy-z)-y) - x \\ x(xy-z) - y \\ xy-z \end{pmatrix} = \begin{pmatrix} yz-x \\ z(yz-x)-y \\ (yz-x)(z(yz-x)-y)-z \end{pmatrix}
\]
This simplifies to
\[
\begin{cases} x( (xy-z)^2 - y^2) &= 0 \\ y(x^2-z^2) &=0 \\ z((yz-x)^2 - y^2) &= 0 \end{cases}
\]
Suppose that $xyz \neq 0$. Then $x^2 = z^2$ from the middle equation, and this leads to a redundancy. Since $z = \pm x$, we have $xy-z = \pm (yz-x)$ and the remaining two equations become equivalent. We consider the two cases $z=\pm x$ separately and solve for $x$ from $y^2=(xy-z)^2 = x^2(y \mp 1)^2$. 
Substituting this relation and $z = \pm x$ into the Markoff equation, one finds
\[
x^2+y^2+z^2=xyz+k \implies 2 \frac{y^2}{(y \mp 1)^2} + y^2 = \pm \frac{y^3}{(y \mp 1)^2} + k
\]
For $k=0$, assuming $y\neq 0$, we divide by $y^2$ and obtain the two quadratics from the statement of the Proposition.

It remains to consider the possibility that some of $x$, $y$, $z$ could be 0. If $x=0$, then $y(x^2-z^2)=0$ implies that either $y$ or $z$ must also be 0. This leaves only $(0,0,0)$ as a fixed point on the original surface $x^2+y^2+z^2=xyz$, or more generally permutations of $(0,0,\sqrt{k})$ on $x^2+y^2+z^2=xyz+k$.
\end{proof}

The solutions for $y$ are $\frac{1}{2}(\pm 3\pm \sqrt{-3})$. Modulo 7, we choose $\sqrt{-3} = \pm 2$ and obtain for example $(x,y,z)=(4,1,3)$ from one of the hexagons of Figure~\ref{fig:mod7}. 

\section{Alternating words} \label{sec:alt}

The remaining words 2121 and 212121 do not change the third coordinate $z$, and act linearly on $(x,y)$. 
By diagonalizing this action, one can determine the fixed points of any alternating word $(21)^L$.
\begin{proposition} \label{prop:alt}
The only fixed points of $(m_2 \circ m_1)^L$ on $x^2+y^2+z^2=xyz+k$ are $(0,0,\sqrt{k})$ unless $L$ is divisible by $p$, or $L$ is a factor of $(p-1)/2$ or $(p+1)/2$.
\begin{enumerate}
\item[(a)]
If $L$ is divisible by $p$, then the fixed points are $(x,y,\pm 2)$ together with $(0,0,\sqrt{k})$.
The former lie on the lines $(x \mp y)^2 = k-4$. 
\item[(b)]
If $(p \pm 1)/2$ is divisible by $L$, then the fixed points of $(m_2 \circ m_1)^L$ are $(x,y,z)$ where $z = \zeta + \zeta^{-1}$ with $\zeta \in \F_{p^2}^{\times}$ a solution of
\[
\zeta^{2L} = 1, \quad \zeta \neq \pm 1
\]
together with $(0,0,\sqrt{k})$ for the level $x^2+y^2+z^2=xyz+k$. 
\end{enumerate}
\end{proposition}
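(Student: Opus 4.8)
The plan is to exploit the fact, emphasized just before the statement, that $m_1$ and $m_2$ both fix the third coordinate $z$ and act by invertible affine-linear maps on the pair $(x,y)$. First I would write out these two maps explicitly: $m_1(x,y)=(yz-x,y)$ and $m_2(x,y)=(x,xz-y)$, each a reflection (an involution) on the plane once $z$ is held fixed. Their composition $m_2\circ m_1$ is therefore an orientation-preserving linear-plus-translation map, and for the level $k=0$ one checks that the fixed locus $(0,0,\sqrt{k})$ is an actual fixed point of every $(m_2\circ m_1)^L$, so the content of the proposition is to show there are \emph{no other} fixed points except in the stated arithmetic cases. I would represent the linearized action as a single $2\times 2$ matrix $A=A(z)$ acting on $(x,y)$, reducing the whole question to: for which $z$ does $A(z)^L$ have $(x,y)$ as a nonzero fixed vector, i.e. for which $z$ is $1$ an eigenvalue of $A(z)^L$?

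The key computation is the eigenvalue structure of $A(z)$. Multiplying the two reflection matrices, one finds $A(z)$ has determinant $1$ and trace equal to a polynomial in $z$; the natural guess, matching the substitution $z=\zeta+\zeta^{-1}$ highlighted in Proposition~\ref{prop:level-counts}, is that the eigenvalues of $A(z)$ are exactly $\zeta^{2}$ and $\zeta^{-2}$ (or $\zeta,\zeta^{-1}$ up to a relabeling I would fix by the explicit trace). This is the heart of the matter: once the eigenvalues are identified as powers of $\zeta$, the condition ``$A(z)^L$ has eigenvalue $1$'' becomes $\zeta^{2L}=1$, and the map is a nontrivial involution-product precisely when $\zeta\neq\pm1$, which corresponds to $z\neq\pm2$ and excludes the degenerate lines $z^2=4$. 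The trichotomy in the statement then falls out of where $\zeta$ lives: $\zeta\in\F_p^\times$ of order dividing $2L$ forces $L$ to share a factor with $p-1$, while $\zeta\in\F_{p^2}^\times\setminus\F_p^\times$ (the non-split case, $z^2-4$ a non-residue) forces a factor of $p+1$, giving the two branches $(p\pm1)/2$; the exceptional case where $A(z)$ is not diagonalizable is exactly $\zeta=\pm1$, i.e. $z=\pm2$, which is the parabolic case $L\equiv 0 \bmod p$ of part (a). I would verify part (a) directly by computing that at $z=\pm2$ the map $A(\pm2)$ is a unipotent shear, whose $L$-th power is the identity on $(x,y)$ iff $p\mid L$.

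The main obstacle I expect is bookkeeping rather than conceptual: pinning down the \emph{exact} eigenvalues (whether they are $\zeta^{\pm1}$ or $\zeta^{\pm2}$, and the correct normalization so that the order condition reads $\zeta^{2L}=1$ with $\zeta\neq\pm1$ rather than some shifted variant), and handling the cases where $xyz=0$ or where $A(z)$ fails to be diagonalizable (the repeated-eigenvalue locus $z^2=4$) so that no spurious fixed points are missed or double-counted. I would also have to be careful that the ambient constraint is the full Markoff equation $x^2+y^2+z^2=xyz+k$, not just the linear action, and check that every vector fixed by $A(z)^L$ at an allowed $z$ genuinely lies on the surface---though since $m_1,m_2$ preserve the surface this is automatic. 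Once the eigenvalue identification is secured, the remaining deductions are elementary finite-field order arguments and the conclusion matches the stated cases.
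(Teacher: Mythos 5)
Your proposal follows essentially the same route as the paper: linearize $m_2\circ m_1$ to the matrix $\begin{pmatrix} -1 & z \\ -z & z^2-1 \end{pmatrix}$ of determinant $1$ and trace $z^2-2$, substitute $z=\zeta+\zeta^{-1}$ to identify the eigenvalues as $\zeta^{\pm 2}$, read off the condition $\zeta^{2L}=1$ in the diagonalizable case, and treat $z=\pm 2$ as the unipotent shear of order $p$. The one detail you flag but should not skip is the non-diagonalizable case with $p\nmid L$, where the shear still fixes the line $x=\pm y$; the paper disposes of it by substituting into the surface equation to get $2x^2+4=2x^2+k$, which has no solutions unless $k=4$.
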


The form of the fixed points in (a) and (b) does not depend on $k$. One simply imposes $x^2+y^2+z^2=xyz+k$ in addition to the fixed-point system, and includes also the exceptional points $(0,0,\sqrt{k})$ where two coordinates equal $0$.

The case 2121, where $L=2$, corresponds to $z=0$ and $\zeta=\sqrt{-1}$. However, if $k=0$, even though we allow $\zeta$ in a quadratic extension, the value $z=0$ is only possible for $p \equiv 1 \bmod 4$.
Substituting $z=0$ in the Markoff equation $x^2+y^2+z^2=xyz$ gives $x^2 + y^2 = 0$. If $p \equiv 3 \bmod 4$, then the only solution is $(0,0,0)$, or else $(x/y)^2 = -1$. 
\begin{corollary} \label{prop:2121}
The fixed points of 2121 are $(x,y,0)$ with
\[
x^2 + y^2 = 0
\]
If $p \equiv 3 \bmod 4$, the only fixed point on $x^2+y^2+z^2=xyz \bmod p$ is $(0,0,0)$. 
\end{corollary}

The case 212121, where $L=3$, corresponds to $z = \pm 1$ with
\[
\pm \zeta = \frac{1 + \sqrt{-3}}{2}
\]
\begin{corollary} \label{prop:212121}
The fixed points of 212121 are $(x,y, \pm 1)$ with
\[
x^2 + y^2 +1 = \pm xy
\]
\end{corollary}

\begin{proof}[Proof of Proposition~\ref{prop:alt}]
The action of $m_2 \circ m_1$ is
\[
\begin{pmatrix} x \\ y \\ z \end{pmatrix} \overset{m_1}{\longrightarrow} \begin{pmatrix} yz-x \\ y \\ z \end{pmatrix} \overset{m_2}{\longrightarrow} \begin{pmatrix} yz-x \\ z(yz-x) - y \\ z \end{pmatrix}
\]
that is,
\[
\begin{pmatrix} x \\ y \end{pmatrix} \mapsto \begin{pmatrix} -1 & z \\ -z & z^2-1 \end{pmatrix} \begin{pmatrix} x \\ y \end{pmatrix}
\]
This matrix has determinant 1 and trace $z^2-2$. In terms of a change of variable
\[
z = \zeta + \zeta^{-1}
\]
the eigenvalues are then $\zeta^2$ and $\zeta^{-2}$. 
Here, $\zeta$ may lie in a quadratic extension of $\F_p$ if need be. In order to have $z = \zeta + \zeta^{-1}$ belong to $\F_p$, it must be that either $\zeta^{p+1} = 1$ or $\zeta^{p-1}=1$. 
The order of $\zeta^2$ in $\F_{p^2}^{\times}$, is therefore a divisor of $(p-1)/2$ or $(p+1)/2$. 

For $z \neq \pm 2$, the matrix representing $m_2 m_1$ can be diagonalized, and its order is the order of $\zeta^2$. After computing the eigenvectors, we find
\[
\begin{pmatrix} -1 & z \\ -z & z^2-1 \end{pmatrix} = \begin{pmatrix} 1 & \zeta \\ \zeta & 1 \end{pmatrix} \begin{pmatrix} \zeta^2 & 0 \\ 0 & \zeta^{-2} \end{pmatrix} \begin{pmatrix} 1 & \zeta \\ \zeta & 1 \end{pmatrix}^{-1}
\]
and
\[
\begin{pmatrix} -1 & z \\ -z & z^2-1 \end{pmatrix}^L = \begin{pmatrix} 1 & \zeta \\ \zeta & 1 \end{pmatrix} \begin{pmatrix} \zeta^{2L} & 0 \\ 0 & \zeta^{-2L} \end{pmatrix} \begin{pmatrix} 1 & \zeta \\ \zeta & 1 \end{pmatrix}^{-1}
\]
If $\zeta^{2L}=1$, then every vector $(x,y)$ is fixed, as claimed. Conversely, if $\zeta^{2L} \neq 1$, only $(0,0)$ is fixed. This gives only $(0,0,0)$ in the Markoff surface, or more generally $(0,0,\sqrt{k})$ for other level sets $x^2+y^2+z^2=xyz+k$. 

 If $z = \pm 2$, then $\zeta = \pm 1$ so there is a repeated eigenvalue $\zeta^2 = \zeta^{-2} = 1$, and the eigenvectors above become multiples of each other by $\pm 1$. In this case, $m_2 m_1$ has order $p$ in view of the following Jordan form:
\[
\begin{pmatrix} -1 & z \\ -z & z^2-1 \end{pmatrix} = \begin{pmatrix} -1 & \pm 2 \\ \mp 2 & 3 \end{pmatrix} = \begin{pmatrix} \pm 2 & 0 \\ 2 & 1 \end{pmatrix} \begin{pmatrix} 1 & 1 \\ 0 & 1 \end{pmatrix} \begin{pmatrix} \pm 2 & 0 \\ 2 & 1 \end{pmatrix}^{-1}
\]
The powers of $m_2 m_1$ are given by
\[
\begin{pmatrix} -1 & z \\ -z & z^2-1 \end{pmatrix}^L = \begin{pmatrix} -1 & \pm 2 \\ \mp 2 & 3 \end{pmatrix} = \begin{pmatrix} \pm 2 & 0 \\ 2 & 1 \end{pmatrix} \begin{pmatrix} 1 & L \\ 0 & 1 \end{pmatrix} \begin{pmatrix} \pm 2 & 0 \\ 2 & 1 \end{pmatrix}^{-1}
\]
If $L$ is divisible by $p$, then every vector $(x,y)$ is fixed. 
If $L$ is not divisible by $p$, then the fixed points are given by $x = \pm y$ with the same sign as in $z=\pm 2$. Substituting this into $x^2+y^2+z^2=xyz+k$ gives $2x^2 + 4 = 2x^2+k$. There are no such fixed points, unless $k=4$. 
\end{proof}

\section{Non-planarity of the cage} \label{sec:cage}

In this section, we show that even if the Markoff graph mod $p$ is disconnected, it has a large non-planar component. This is the giant component constructed by Bourgain-Gamburd-Sarnak from what they call \emph{the cage} \cite[Section 3.2]{BGS}. The cage consists of triples $(x,y,z)$ where at least one of the coordinates has maximal order with respect to the analysis from Section~\ref{sec:alt}. It is shown in \cite{BGS} that all of these points belong to the same connected component.

Recall the change of variable
\[
z = \zeta + \zeta^{-1}, \quad \zeta^{p+1}=1 \ \text{or} \ \zeta^{p-1}=1
\]
The maximal order is therefore $p+1$. The number of elements of order $p+1$ in the cyclic group $\F_{p^2}^{\times}$ is given by Euler's totient function $\phi(p+1)$. 
These correspond to $\frac{1}{2} \phi(p+1)$ values of $z=\zeta+\zeta^{-1}$. We ignore the possibility that $\zeta = \zeta^{-1}$, since then $z = \pm 2$. This arises only for $p \equiv 1 \bmod 4$, in which case we might as well conclude non-planarity from Theorem~\ref{thm:mod4}.
The configuration used to prove Theorem~\ref{thm:mod4} meets every level set where a coordinate $x$, $y$, or $z$ takes a given value, as will be clear from (\ref{eqn:inductively}), and in particular it lies in the same component as the cage.

For each of these maximal values of $z$, there are $p+1$ solutions $(x,y)$, by equation (\ref{eqn:level-z}). Indeed, in these cases, $\zeta^{p}=\zeta^{-1}$ so $\zeta$ is ``imaginary" and there are $p+1$ solutions $(x,y)$ rather than $p-1$. There are then $\frac{1}{2} p \phi(p+1)$ triples $(x,y,z)$ where $z$ has maximal order, and similarly for the first or second coordinate. Of course, more than one coordinate could have maximal order.

An interesting example is $p=7$, where the cage encloses the entire graph. In this case, $p+1=8$ so let $\zeta$ be an eighth root of unity. Write $i^2=-1$ in the quadratic extension of $\F_7$, and observe that $3^2 = 9 \equiv 2 \bmod 7$. The maximal order therefore occurs for $z$ equal to
\[
\frac{1+i}{\sqrt{2}} + \frac{1-i}{\sqrt{2}} = \frac{2}{\sqrt{2}} = \sqrt{2} = \pm 3
\]
The Markoff graph mod 7 (Figure~\ref{fig:mod7}) has four vertices such as $(3,3,3)$ up to sign changes, where all coordinates have maximal order. The twelve neighbours of those, such as $(6,3,3)$, have two coordinates of maximal order. Another twelve points, such as $(1,6,3)$, have only one maximal coordinate. These account for all solutions in the form $28=4+12+12 = p\phi(p+1)$. There is a cycle of length 8 at every vertex, with three such octagons meeting at $(3,3,3)$; two octagons and a hexagon at $(6,3,3)$; or an octagon, a hexagon, and a self-edge at $(1,6,3)$.

The points in the cage show that there is a connected component of size at least 
\begin{equation} \label{eqn:cage-lower-bound}
V \geq \frac{1}{2} p\phi(p+1) > \frac{p^2}{1000 \log \log{p}}
\end{equation}
where we have used a loose estimate for Euler's totient function $\phi$. 
Asymptotically, a formula of Mertens gives
\[
\phi(n) \geq \left( e^{-\gamma} + o(1) \right) \frac{n}{\log{\log{n}}}
\]
where $\gamma$ is the Euler-Mascheroni constant and $e^{-\gamma} \approx 0.5614$. See \cite[Theorem 7]{I} or \cite[Theorem 429]{HW}. The correct constant is much larger than the underestimate $1/500$ from (\ref{eqn:cage-lower-bound}), but perhaps only applicable for large $n$. The rougher form (\ref{eqn:cage-lower-bound}) is valid for all $p$ and follows from Chebyshev-style estimates for prime numbers.
We discuss these in Section~\ref{sec:totient}.
%gp > exp(-Euler())
% 0.56145948356688516982414321479088078677

We substitute (\ref{eqn:cage-lower-bound}) in (\ref{eqn:p-bound}), where the number of squares is $s=0$ since we are now interested only in $p \equiv 3 \bmod 4$. The number of hexagons is at most $p+1$, as in (\ref{eqn:11mod12}).
If the connected component of the cage is planar, it follows that
\begin{equation} \label{eqn:tests}
\frac{1}{2}p \phi(p+1) \leq 17p - 71, \quad \phi(p+1) < 34.
\end{equation}
Even with a crude bound for $\phi$, this implies
\begin{equation} \label{eqn:8500}
\frac{p+1}{\log\log(p+1)} \leq 500 \phi(p+1) < 17000
\end{equation}
The solution to $x/\log\log{x}=17000$, using Newton's method for instance, is $x=40134.5\ldots$. In particular, the inequality in (\ref{eqn:8500}) is reversed if $p > 40133$ (which factors as $67 \times 599$, the nearest prime being 40129). 
One could certainly narrow the search further using better estimates, but it is already feasible to compute $\phi(p+1)$ for all primes up to $40129$ (and we only need those congruent to 3 mod 4). 
The criterion (\ref{eqn:tests}) is satisfied for $p \leq 101$, but no larger primes. 
The congruences modulo 28 from Corollary~\ref{cor:28} show non-planarity for several of these, leaving only
\[
p= 7, 19, 31, 47, 59, 83.
\]
Of the remaining cases, $p=7$ does in fact have a planar Markoff graph, which coincides with the cage.
The others are small enough that one can check the graph is connected, either by enumerating enough triples $(x,y,z)$, or by a spectral method (Section ~\ref{sec:lt}; see also \cite{dci-lee} for connectedness up to $p \leq 2999$). The true value $V$ is then even larger than the lower bound from the cage, and non-planarity follows from the reasoning in Section~\ref{sec:euler}.

Figure~\ref{fig:19} shows part of the Markoff graph for $p=19$, the smallest example where neither Theorem~\ref{thm:mod4} nor Theorem~\ref{thm:7} nor the lower bound from the cage is enough to deduce non-planarity.

\section{Lower bound for Euler's totient function} \label{sec:totient}

In this section, we prove the estimate (\ref{eqn:cage-lower-bound}) for Euler's function $\phi(n)$, given by
\begin{equation} \label{eqn:phi-product}
\phi(n) = n \prod_{p \mid n} \left( 1 - \frac{1}{p} \right) \geq \frac{1}{500} \frac{n}{\log\log{n}}
\end{equation}
This is a standard topic, with excellent expositions available in \cite[Theorem 7]{I}, \cite[Theorem 429]{HW}, and \cite[Section 2.2]{FI}. We follow them closely, and simply keep track of the implicit constants.

Consider the contributions to (\ref{eqn:phi-product}) from large primes $p > L$ and small primes $p \leq L$. Eventually, a good choice will be $L = \log{n}$. There are not too many large factors of $n$, because $n > L^k$ if there are $k$ large primes among the factors of $n$. 
Therefore $k < \log{n}/\log{L}$, and each $p > L$ contributes at least $1-1/L$ to the product.
For the small primes, we obtain a lower bound by extending the product to all $p \leq L$, regardless of whether they divide $n$, since each term $1-1/p$ is less than 1.
It follows that
\[
\prod_{p \mid n } \left(1 - \frac{1}{p} \right) > \left( 1 - \frac{1}{L} \right)^{\log{n} / \log{L} } \prod_{p \leq L} \left(1 - \frac{1}{p} \right)
\]
The exponent $\log{n}/\log{L}$ is greater than 1, so the binomial expansion gives
\[
\left( 1 - \frac{1}{L} \right)^{\log{n}/\log{L}} \geq 1 - \frac{\log{n}}{L \log{L}}
\]
which will be bounded below if one chooses $L \asymp \log{n}$ or larger. With $L = C \log{n}$, the contribution of large primes is at least
\begin{equation} \label{eqn:large-half}
1 - \frac{1}{C \log{L}}  \geq \frac{1}{2}, \quad \text{for} \ n \geq e^{e^{2/C}}.
\end{equation}

The decisive contribution, that of small primes, is given asymptotically by a formula of Mertens:
\[
\prod_{p \leq L} \left( 1 - \frac{1}{p} \right) \sim \frac{e^{0.5772\ldots} }{\log{L} }
\]
where the value in the exponent is the Euler-Mascheroni constant \cite[Theorem 7]{I}. For our purposes, it is better to have a less precise estimate that applies already for small values of $L$. 
%0.5772156649015328606065120901

We first take logarithms to convert the product to a sum, and then extract the leading term from the power series $\log(1-x)=-x + \ldots$ obtaining:
\begin{equation} \label{eqn:log}
\prod_{p \leq L} \left(1 - \frac{1}{p} \right) = \exp\left( - \sum_{p \leq L} \frac{1}{p} + \sum_{p \leq L} \left(\frac{1}{p} + \log\left(1 - \frac{1}{p}\right) \right)  \right)
\end{equation}
The second sum converges, since its terms are dominated by $p^{-2}$. Numerically
\[
\sum_{p \leq L}  \left(\frac{1}{p} + \log\left(1 - \frac{1}{p}\right) \right) \geq \sum_p  \left(\frac{1}{p} + \log\left(1 - \frac{1}{p}\right) \right) = -0.3157\ldots
\]
%gp > S=0;
% forprime(p=2,10^4,S+=1/p+log(1-1/p))
%S
%  -0.31571354403274130350848778316649935731
%) gp > S=0;
%forprime(p=2,10^10,S+=1/p+log(1-1/p))
% S
% -0.31571845205180564941447137380837623602

The main term in (\ref{eqn:log}) is therefore $\sum_{p\leq L} 1/p$, which is well known to be of order $\log\log{L}$ (as discussed in the same reference \cite[Theorem 7]{I} for instance). 
For an explicit bound of this form, we first sum by parts:
\begin{equation} \label{eqn:parts}
\sum_{p \leq L} \frac{1}{p} = \sum_{p \leq L} \frac{\log{p}}{p} \frac{1}{\log{p}} = \int_2^L \frac{S(t)}{t (\log{t})^2} dt + \frac{S(L)}{\log{L}}
\end{equation}
where we have differentiated $1/\log{p}$ and integrated $\log{p}/p$. 
The summatory function can be bounded by extending the range to include prime powers:
\begin{equation} \label{eqn:summatory}
S(t) = \sum_{p \leq t} \frac{\log{p}}{p} \leq \sum_{m \leq t} \frac{\Lambda(m)}{m}
\end{equation}
where $\Lambda(m)=\log{p}$ if $m$ is a power of a prime $p$, and 0 otherwise.
These weights are more convenient because of the identity
\[
\sum_{d | n} \Lambda(d) = \log{n}
\]
and its sum
\[
\sum_{d \leq L} \Lambda(d) \left\lfloor \frac{L}{d} \right\rfloor = \sum_{\ell \leq L } \log{\ell}
\]
We multiply and divide by $L$, noting that $\lfloor L/d \rfloor \leq L/d+1$
\[
\sum_{d \leq L} \frac{\Lambda(d)}{d} \leq \frac{1}{L} \sum_{\ell \leq L} \log{\ell} + \frac{1}{L} \sum_{d \leq L} \Lambda(d)
\]
The first sum can be estimated by an integral:
\begin{equation} \label{eqn:integral-test}
\int_1^x \log{t} \ dt \leq \sum_{\ell \leq x} \log{\ell} \leq \int_1^{x+1} \log{t} \ dt = (x+1) \log(x+1) - x
\end{equation}
For the remainder, we claim that the following Chebyshev-style estimate holds already for any $x \geq 2$
\begin{equation} \label{eqn:chebyshev}
\sum_{d \leq x} \Lambda(d) \leq x \log{4} +  \big(\log{x} + 2)\frac{\log{x}}{\log{2}}
\end{equation}
Assuming this for the moment, we continue with (\ref{eqn:summatory})
\begin{align*}
S(t) &\leq \sum_{m \leq t} \frac{\Lambda(m)}{m} \leq \frac{1}{t} \sum_{\ell \leq t} \log{\ell} + \frac{1}{t} \sum_{d \leq t} \Lambda(d) \\
&\leq \log{t} -1+ \log{4} + \frac{(\log{t}+2)\log{t}}{t \log{2}} \leq \log{t} + 2
\end{align*}
Finally, we substitute this into (\ref{eqn:parts}) and find
\[
\sum_{p \leq L} \frac{1}{p} \leq \int_2^L \frac{\log{t}+2}{t(\log{t})^2} dt + \frac{\log{L}+2}{\log{L}}
\]
The integral can be computed exactly by a substitution $u=\log{t}$ with $du = dt/t$, whence
\begin{align*}
\sum_{p \leq L} \frac{1}{p} &\leq \log{\log{L}} - \log\log{2} + 2\left( \frac{1}{\log{2}} - \frac{1}{\log{L}} \right) + \frac{\log{L}+2}{\log{L}} \\
&\leq \log{\log{L}} + 5
\end{align*}
The original product from (\ref{eqn:log}) is then, with $L = \log{n}$,
\begin{equation} \label{eqn:final-product}
\prod_{p \leq L} \left( 1 - \frac{1}{p} \right) \geq \exp(-\log{\log{L}} - 5 - 0.3157) \geq \frac{1}{250 \log{\log{n}}}
\end{equation}
The loose estimate (\ref{eqn:cage-lower-bound}) gives up an extra factor of 2 from the large primes. This is guaranteed by (\ref{eqn:large-half}) for $n \geq e^{e^2} \approx 1618$, and one can check the smaller values of $n$ to be sure (with room to spare) that $\phi(n) > \frac{1}{500} n / \log\log{n}$ for all $n \geq 2$.

To prove (\ref{eqn:chebyshev}), recall the notation $\psi(x) = \sum_{n \leq x} \Lambda(n)$. In terms of $\psi$,
\[
\sum_{m \leq x} \psi(x/m) = \sum_{d \leq x} \Lambda(d) \left\lfloor \frac{x}{d} \right\rfloor = \sum_{n \leq x} \log{n}
\]
Subtraction gives
\begin{align*}
\sum_{n \leq x} \log{n} - 2 \sum_{n \leq x/2} \log{n} &= \sum_{m \leq x} \psi\left( \frac{x}{m} \right) - 2 \sum_{m \leq x/2} \psi\left( \frac{x}{2m} \right) \\
&\geq \psi(x) - \psi(x/2)
\end{align*}
because each difference $\psi(x/(2j-1)) - \psi(x/(2j))$ is non-negative.
This can be simplified using (\ref{eqn:integral-test}) for the logarithms:
\begin{align*}
\psi(x) &\leq \psi(x/2) + x\log{2} + \log{x} + (x+1)\log\left(1+\frac{1}{x}\right) \\
&< \psi(x/2) + x\log{2} + \log{x} + 2
\end{align*}
This can be iterated to bound $\psi(x)$ in terms of $\psi(x/2)$, then $\psi(x/4)$, $\psi(x/8)$, and so on.
After roughly $k \sim \log{x} / \log{2}$ iterations, we reach an empty sum $\psi(x/2^k) = 0$, leaving only a geometric progression:
\begin{align*}
\psi(x) &< x  \left(1 + \frac{1}{2} + \ldots \right)\log{2} + \big(\log{x} + 2)\frac{\log{x}}{\log{2}} \\
&< x \log{4} + \big(\log{x} + 2)\frac{\log{x}}{\log{2}}
\end{align*}
as required.

For comparison, although it is only the upper bound that is relevant in our context, Niven \cite{N} gives a lower bound of the same character, bounding $\sum_{p \leq L} 1/p$ from below by $\log\log{L}$ less an explicit constant. That argument does not require Chebyshev's estimates for $\psi(x)$. For the correct constant in the asymptotic as $L \rightarrow \infty$, see \cite[Theorem 7, p. 22]{I}.

\section{Proof of Theorem~\ref{thm:mod4}} \label{sec:proof-mod4}

To prove Theorem~\ref{thm:mod4}, we produce a complete bipartite graph joining the permutations of $(2+2i,2,2)$ and $(2-2i,2,2)$. 
One can check as follows that $(m_1 \circ m_2)^{(p-1)/2}$ takes $(2+2i,2,2)$ to $(2,2-2i,2)$. By definition,
\[
m_2 ( 2+2i,2,2) = (2+2i,2+4i,2)
\]
and then
\[
m_1 \circ m_2 (2+2i,2,2) = (2+6i,2+4i,2).
\]
Inductively, one finds that for each $k \geq 0$,
\begin{equation} \label{eqn:inductively}
(m_1 \circ m_2)^k (2+2i,2,2) = (2+(4k+2)i, 2+4ki, 2).
\end{equation}
In particular, with $k = (p-1)/2$, the claim follows since we work modulo $p$:
\[
(m_1 \circ m_2)^{(p-1)/2} (2+2i,2,2) = (2+2p i, 2 + 2(p-1)i, 2) = (2,2-2i,2).
\]
We emphasize that the only coordinate not equal to 2, in addition to moving from the $x$-coordinate to the $y$-coordinate, has changed from $2+2i$ to $2-2i$. 

In the same way, we find that $(m_1 \circ m_3)^{(p-1)/2}$ takes $(2+2i,2,2)$ to $(2,2,2-2i)$, while $(m_2 \circ m_3)^{(p-1)/2}$ takes $(2,2+2i,2)$ to $(2,2,2-2i)$. 
Thus the Markoff graph contains the configuration drawn in Figure~\ref{fig:1mod4}.
We abbreviate $m_j \circ m_k$ by $m_j m_k$.

\begin{figure}
\begin{tikzpicture}[scale=0.75]
\draw (3,0) arc (0:180:3);
\draw (6,0) arc (0:180:6);
\draw (-3,0) arc (180:270:3);
\draw (-6,0) arc (180:270:6);
\draw (0,-3) -- (6,0);
\draw (0,-6) -- (3,0);
\draw (2.3,-2.3) node {$m_2$};
\draw (1.7,0) node {$(2,2,2+2i)$};
\draw (7.3,0) node {$(2,2,2-2i)$};
\draw (3,0) -- (6,0);
\draw (4.5,0.25) node {$m_3$};
\draw (-1.7,0) node {$(2,2+2i,2)$};
\draw (-7.3,0) node {$(2,2-2i,2)$};
\draw (-3,0) -- (-6,0);
\draw (-4.5,0.25) node {$m_2$};
\draw (0,3) -- (0,6);
\draw (0.5,4.5) node {$m_1$};
\draw (0,2.5) node {$(2-2i,2,2)$};
\draw (0,6.5) node {$(2+2i,2,2)$};
\draw (-0.5,-3.4) node {$(2,2-4i,2-2i)$};
\draw (1.8,-6) node {$(2,2+4i,2+2i)$};
\draw (5,5) node {$(m_1  m_3)^{(p-1)/2}$};
\draw (-5,5) node {$(m_1  m_2)^{(p-1)/2}$};
%\draw (-0.5,-1) node {$m_3(m_2 m_3)^{(p-1)/2 - 1}$};
\draw (0,0)++(250:1.5) node {$m_3(m_2 m_3)^{(p-1)/2-1}$};
%arrows
\draw (0,0)++(30:6)--++(90:1);
\draw (0,0)++(30:6)--++(150:1);
\draw (0,0)++(150:6)--++(90:1);
\draw (0,0)++(150:6)--++(30:1);
%attempting more labels
\draw (0,0)++(190:6.5) node {$m_3$};
\draw (0,0)++(200:6.5) node {$m_2$};
\draw (0,0)++(210:6.5) node {$m_3$};
\draw (0,0)++(220:6.5) node[circle,fill,inner sep=0.75pt]{};
\draw (0,0)++(225:6.5) node[circle,fill,inner sep=0.75pt]{};
\draw (0,0)++(230:6.5) node[circle,fill,inner sep=0.75pt]{};
\draw (0,0)++(240:6.5) node {$m_3$};
\draw (0,0)++(250:6.5) node {$m_2$};
\draw (0,0)++(260:6.5) node {$m_3$};
\end{tikzpicture}
\caption{
For any prime $p \equiv 1 \bmod 4$, the Markoff graph contains a subdivision of the complete bipartite graph connecting $(2,2,2+2i)$ and its permutations to $(2,2,2-2i)$ and its permutations. 
}
 \label{fig:1mod4}
\end{figure}
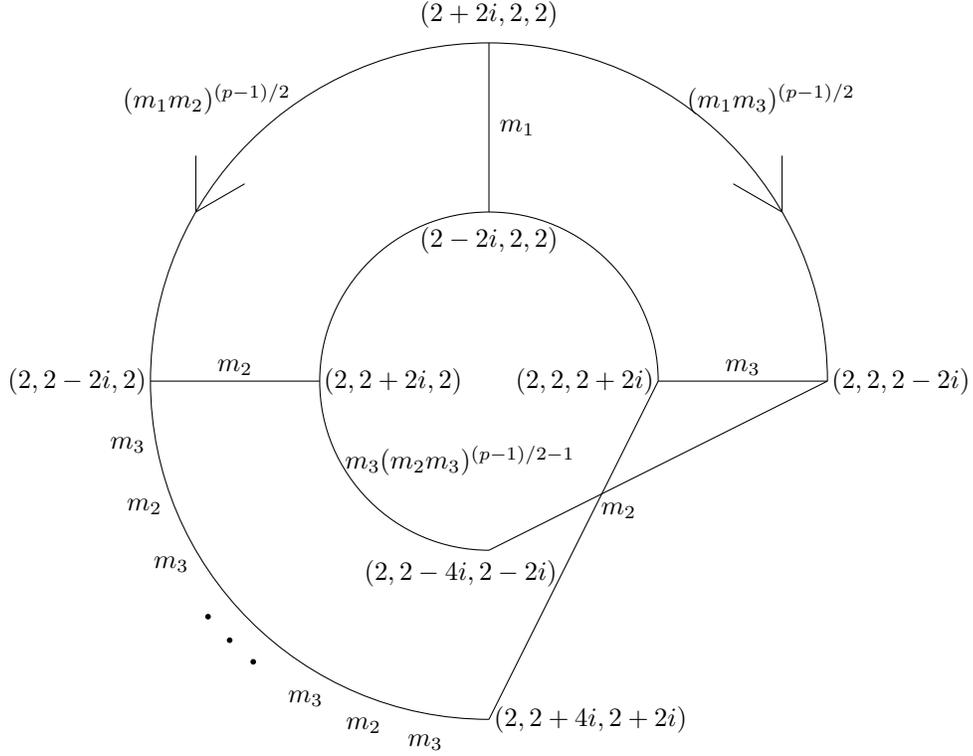
The top half of the figure has an outer curve connecting $(2+2i,2,2)$ to $(2,2-2i,2)$ and $(2,2,2-2i)$ via
\begin{align*}
(m_1 m_3)^{(p-1)/2} (2+2i,2,2) &= (2,2,2-2i) \\
(m_1 m_2)^{(p-1)/2}(2+2i,2,2) &= (2,2-2i,2)
\end{align*}
and an inner curve connecting the points $(2,2+2i,2)$ and $(2,2,2+2i)$ to $(2-2i,2,2)$.
The bottom half shows the analogous relation for the second and third coordinates, namely
\[
(m_2 m_3)^{(p-1)/2} (2,2+2i,2) = (2,2,2-2i),
\]
but because of the choices we have already made in drawing the top half, $(2,2+2i,2)$ is part of the inner circle while $(2,2,2-2i)$ is on the outside. 
Likewise, $(2,2-2i,2)$ on the outside is connected to $(2,2,2+2i)$ on the inside.
To connect the outer and inner circles in this way requires a crossing of edges.
In Figure~\ref{fig:1mod4}, this crossing corresponds to the final move $m_2$ in the paths
\begin{align*}
(2,2,2-2i) &= m_2 (2,2-4i,2-2i) = m_2 \circ m_3 (m_2 m_3)^{(p-1)/2-1} (2,2+2i,2) \\
(2,2,2+2i) &= m_2 (2,2+4i,2+2i) = m_2 (m_3 m_2)^{(p-1)/2} (2,2+2i,2).
\end{align*}

Starting from this configuration, we contract edges as follows to produce a minor isomorphic to $K_{3,3}$.
First, contract the edges forming the inner and outer quarter-circles in the top half of the figure.
This connects $(2+2i,2,2)$ to $(2,2-2i,2)$ and to $(2,2,2-2i)$, as well as $(2-2i,2,2)$ to $(2,2+2i,2)$ and $(2,2,2+2i)$.
Second, contract the edges in the ``outer third quadrant" from $(2,2-2i,2)$ to $(2,2+4i,2+2i)$, leaving a path from $(2,2-2i,2)$ to $(2,2,2+2i)$. 
Third, contract the ``inner third quadrant" to obtain a path from $(2,2+2i,2)$ to $(2,2,2-2i)$. 
The resulting graph has six vertices, namely the permutations of $(2 \pm 2i, 2,2)$, such that all vertices with a coordinate $2+2i$ are connected to all vertices with a coordinate $2-2i$. This is a complete bipartite graph $K_{3,3}$ as required.

In this construction, two moves such as $m_1$ and $m_2$ alternate between the lines $x-y = \pm 2i$ contained in the Markoff surface for $p \equiv 1 \bmod 4$. For $p \equiv 3 \bmod 4$, all the lines are imaginary.

\section{Proof of Theorem~\ref{thm:7}} \label{sec:proof7}

In this section, we prove Theorem~\ref{thm:7} by producing a copy of Figure~\ref{fig:-7square} inside the Markoff graph mod $p$, so long as $-7$ is a quadratic residue.
Consider the Markoff equation with $y=z=-1$. The final coordinate must satisfy
\[
x^2 + 2 =x, \qquad x = \frac{1 \pm \sqrt{-7} }{2}
\]
so there are two solutions $x$ and $1-x$ if $-7$ is a non-zero square modulo $p$; a single solution for $p=7$; and no solutions otherwise. Suppose $-7$ is a square. Then there are six solutions $(x,-1,-1)$, $(1-x,-1,-1)$ and their permutations.
A single move $m_1$ connects $(x,-1,-1)$ to $(1-x,-1,-1)$. A duo of moves $m_1 \circ m_2$ leads to
\[
(x,-1,-1) \mapsto (x,1-x,-1) \mapsto (-1,1-x,-1).
\]
In the same way, $m_1 \circ m_3$ takes $(x,-1,-1)$ to $(-1,-1,1-x)$, and one has similar paths $m_j \circ m_k$ starting from $(-1,x,-1)$ and $(-1,-1,x)$. 
These paths starting from $(x,-1,-1)$ are illustrated in Figure~\ref{fig:-7square}.
Together with their counterparts at $(-1,x,-1)$ and $(-1,-1,x)$, they form a copy of $K_{3,3}$.

\begin{figure}[h]
\begin{tikzpicture}[scale=2]
\pgfmathsetmacro{\w}{1.732}
\draw (0,0) node(xy-){$(x,1-x,-1)$};
\draw (\w,0) node(yx-){$(1-x,x,-1)$};
\draw (xy-)++(30:1) node(--y){$(-1,-1,1-x)$};
\draw (xy-)++(90:1) node(-xy){$(-1,x,1-x)$};
\draw( xy-)++(150:1) node(-x-){$(-1,x,-1)$};
\draw (xy-)++(210:1) node(-y-){$(-1,1-x,-1)$};
\draw (xy-)++(270:1) node(-yx){$(-1,1-x,x)$};
\draw (xy-)++(330:1) node(--x){$(-1,-1,x)$};
\draw (yx-)++(90:1) node(x-y){$(x,-1,1-x)$};
\draw (yx-)++(30:1) node(x--){$(x,-1,-1)$};
\draw (yx-)++(330:1) node(y--){$(1-x,-1,-1)$};
\draw (yx-)++(270:1) node(y-x){$(1-x,-1,x)$};
\draw (--y)--(-xy)--(-x-)--(-y-)--(-yx)--(--x)--(y-x)--(y--)--(x--)--(x-y)--(--y)--(--x);
\draw (-y-)--(xy-)--(x--);
\draw (-x-)--(yx-)--(y--);
\end{tikzpicture}
\caption{If $-7$ is a non-zero square modulo $p$, then $x^2 + y^2 + z^2 = xyz \bmod p$ has solutions $(x,-1,-1)$ and $(1-x,-1,-1)$ and their permutations. From a solution with coordinate $x$, one can reach any solution with coordinate $1-x$ by either a single move or two moves.}
\label{fig:-7square}
\end{figure}
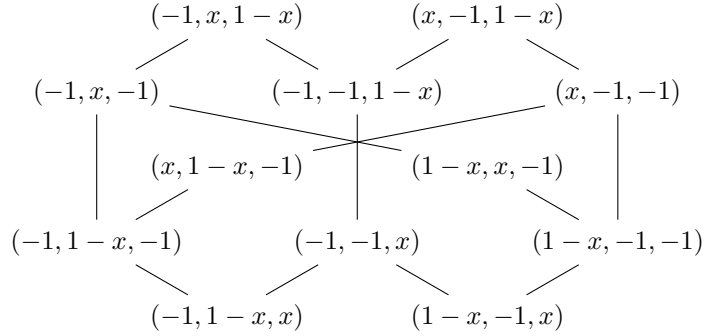

The same configuration occurs in other level sets $x^2 + y^2 + z^2 = xyz + k$ whenever there are solutions with two coordinates equal to $-1$. Setting $y=z=-1$, the solution for $x$ is
\[
x = \frac{1 \pm \sqrt{4k-7} }{2}.
\]
It would seem to follow that the Markoff graph of level $k$ has a non-planar component provided $4k-7$ is a non-zero square modulo $p$. However, the configuration might not be as shown in Figure~\ref{fig:-7square} if there are self-edges. For example, if $k=4$, one has $\sqrt{4k-7}= \sqrt{9}=3$, so the special values are $x=2$ and $1-x=-1$.  Each point of the form $(2,-1,-1)$ has a pair of self-edges because $-1 \mapsto 2(-1)-1 = -1$. 
The whole construction lies in a planar component consisting of $(-1,-1,-1)$ and its neighbours, which is essentially the cluster from Figure~\ref{fig:mod2} since $k=4 \equiv 0 \bmod 2$. 
Self-edges occur in Figure~\ref{fig:-7square} only for $x=-1$ or $x=2$, but these can equal $(1+\sqrt{4k-7})/2$ only for $k=4$.

\section{Planar graphs do not expand} \label{sec:lt}

This section reviews why planar graphs cannot form an expander family, which was our motivation for studying planarity of Markoff graphs (or, more hopefully, their non-planarity).
The failure of expansion in planar graphs is a consequence of a celebrated theorem of Lipton and Tarjan.
\begin{theorem}[Lipton-Tarjan planar separator theorem, \cite{LT}] \label{thm:lipton-tarjan}
For any planar graph on $n$ vertices, the vertex set can be partitioned into three sets $A$, $B$, and $C$ such that no vertex in $A$ is connected to any vertex in $B$, each of $A$ and $B$ contains at most $2n/3$ vertices, and $C$ contains at most $2\sqrt{2n}$ vertices. 
\end{theorem}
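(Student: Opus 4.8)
The plan is to follow Lipton and Tarjan's breadth-first layering argument. It is cleanest to establish the equivalent weighted form, in which the vertices carry nonnegative weights summing to $1$ and one asks that $A$ and $B$ each carry weight at most $2/3$; unit weights recover the stated theorem, and this formulation also disposes of disconnectedness, since the separator need only be found inside the component carrying more than $1/3$ of the weight, the remaining components being apportioned to $A$ and $B$ by a routine balancing. So assume $G$ is connected and fix a planar embedding. Choose a root $r$, run breadth-first search, and let $L(i)$ be the set of vertices at distance exactly $i$ from $r$. The structural fact driving everything is that each level $L(i)$ is itself a separator: since a BFS edge changes the level by at most one, deleting $L(i)$ leaves no edge joining a vertex at distance $<i$ to one at distance $>i$.

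Next I would locate the median level $l_1$, the first level at which the cumulative weight reaches $1/2$, and bracket it by two small levels. Because the level weights sum to $1$, a pigeonhole argument shows that within $O(\sqrt{n})$ steps above and below $l_1$ there are levels $l_0 \le l_1 \le l_2$ with $|L(l_0)|, |L(l_2)| \le \sqrt{n}$. Deleting $L(l_0) \cup L(l_2)$ splits $G$ into an ``outer'' part (levels below $l_0$ together with levels above $l_2$) and a ``middle'' band (levels strictly between $l_0$ and $l_2$). By the choice of $l_1$ the outer part carries weight at most $2/3$ once it is attached to the correct side, so the remaining task is to separate the middle band. Contracting everything below $l_0$ to the root and discarding everything above $l_2$ yields a planar graph carrying a spanning tree of radius at most $l_2 - l_0 = O(\sqrt{n})$, which is the bounded-radius situation handled next.

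For a planar graph with a spanning tree of radius $r$ I would use the classical fundamental-cycle argument. First triangulate (adding edges only helps, as any separator of the triangulation separates the original graph). Each non-tree edge together with the two tree paths running from its endpoints up to their common ancestor forms a cycle on at most $2r+1$ vertices, and in the embedding this cycle is a Jordan curve splitting the remaining vertices into an inside and an outside. The heart of the matter is to produce a cycle of this type whose two sides each carry weight at most $2/3$: starting from any such cycle, if the inside is too heavy one crosses the bounding edge into the triangular face just inside it and replaces that edge by the other two sides of the triangle, strictly reducing the interior weight; iterating must terminate at a balanced cycle, still of length at most $2r+1$. That cycle is the separator for the band.

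Finally I would assemble the separator as $L(l_0) \cup L(l_2)$ together with the balanced fundamental cycle and optimize the constants. The two contributions are about $2\sqrt{n}$ from the bracketing levels and about $2(l_2 - l_0)+1$ from the cycle, and balancing them subject to $l_2 - l_0 = O(\sqrt{n})$ is exactly what is needed to reach the sharp bound $2\sqrt{2n}$; meanwhile the median choice of $l_1$ and the two-thirds balance of the cycle keep each of $A$ and $B$ below $2n/3$. I expect the main obstacle to be the topological balancing step of the third paragraph, namely proving that a single fundamental cycle can be made to split the enclosed weight evenly, and then threading the constants carefully through the three-way combination so as to land on precisely $2\sqrt{2n}$ rather than a weaker $O(\sqrt{n})$ estimate.
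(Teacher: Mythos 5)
First, a point of orientation: the paper does not prove this statement at all --- it is quoted from Lipton and Tarjan \cite{LT} and used as a black box in Section~9, so the only meaningful benchmark for your sketch is the original argument. That is indeed the argument you are reconstructing: the weighted reformulation, BFS levels as separators, a median level bracketed by two thin levels, contraction of the inner part to a single root, and the fundamental-cycle lemma for planar graphs with a spanning tree of bounded radius. The architecture is correct and essentially identical to \cite{LT}.

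The genuine gap is exactly where you flag it: the selection of $l_0$ and $l_2$, and hence the constant. The pigeonhole you state --- within $O(\sqrt{n})$ steps of $l_1$ there are levels of size at most $\sqrt{n}$ --- is true but cannot be ``balanced'' into $2\sqrt{2n}$: with $|L(l_0)|,|L(l_2)|\le\sqrt{n}$ and $l_2-l_0$ as large as about $2\sqrt{n}$, the fundamental cycle alone may contribute $4\sqrt{n}+1$ vertices, giving a separator of size roughly $6\sqrt{n}$, well above $2\sqrt{2n}\approx 2.83\sqrt{n}$. The actual choice is a joint optimization of level size and distance from the median: letting $k$ be the number of vertices in levels $0$ through $l_1$, there exists $l_0\le l_1$ with $|L(l_0)|+2(l_1-l_0)\le 2\sqrt{k}$ (otherwise summing the inequalities $|L(i)|>2\sqrt{k}-2(l_1-i)$ over the last $\lceil\sqrt{k}\rceil$ levels already exceeds $k$ vertices), and symmetrically $l_2>l_1$ with $|L(l_2)|+2(l_2-l_1-1)\le 2\sqrt{n-k}$. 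The separator $L(l_0)\cup L(l_2)\cup C$, with $C$ the cycle of length at most $2(l_2-l_0-1)+1$, then has size at most $2\sqrt{k}+2\sqrt{n-k}$, which by Cauchy--Schwarz is at most $2\sqrt{2n}$, with the worst case at $k=n/2$. Without this coupled choice the sharp constant is out of reach. Two smaller repairs: in the cycle-improvement step the enclosed \emph{weight} need not strictly decrease when you cross into an interior triangle (the third vertex may already lie on the cycle and the swallowed region may be empty), so termination should be argued via the strictly decreasing number of enclosed faces; and at the end one must still assemble the up-to-four pieces (below $l_0$, above $l_2$, and the two sides of the cycle) into $A$ and $B$ each of weight at most $2/3$, which uses that the two outer pieces each have weight at most $1/2$ by the median property of $l_1$.
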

Moreover, Lipton and Tarjan give an algorithm for computing such a partition in $O(n)$ steps. 

A standard way to quantify expansion is the Cheeger constant.
For a graph $G$, the Cheeger constant $h(G)$ is defined as
\begin{equation} \label{eqn:cheegerdefn}
h(G) = \min \frac{|\partial A|}{\min(|A|,|G \setminus A|)}
\end{equation}
where the minimum is taken over all non-empty, proper subsets $A$ of the vertices of $G$, and $\partial A$ is the set of edges joining a vertex in $A$ to another vertex in its complement $G \setminus A$. 
If $h(G) = 0$, then $G$ is disconnected since there is a subset $A$ with $|\partial A| = 0$, that is, no edges from $A$ to its complement.
Expansion refers to a sequence of graphs with a growing number of vertices, but $h(G)$ bounded strictly away from 0. 

Theorem~\ref{thm:lipton-tarjan} implies that, for any sequence of planar graphs with a growing number of vertices, $h(G) \rightarrow 0$. Indeed, given a planar graph $G$ on $n$ vertices, consider sets $A$, $B$, and $C$ as in Theorem~\ref{thm:lipton-tarjan}. We use one of the large parts, say $A$, as a candidate for the ratio $|\partial A| \div \min(|A|,|G\setminus A|)$ in the definition (\ref{eqn:cheegerdefn}) of $h(G)$. There are no edges between $A$ and $B$, so
\[
|\partial A | \leq |C| \leq 2\sqrt{2n}.
\]
On the other hand, $|A| \geq n/3 - 2\sqrt{2n}$ because $B$ and $C$ together account for at most $2n/3+2\sqrt{2n}$ vertices. Likewise, $|G \setminus A| \geq n/3$ because $|A| \leq 2n/3$. 
It follows that
\begin{equation} \label{eqn:lt-corollary}
h(G) \leq \frac{ |\partial A| }{\min(|A|, |G \setminus A |) } \leq \frac{2\sqrt{2n}}{n/3 - 2\sqrt{2n}} \lesssim n^{-1/2}
\end{equation}
In particular, $h(G) \rightarrow 0$ as $n \rightarrow \infty$. 

In contrast, numerical evidence \cite{dci-lee} suggests that $h(G)$ is bounded away from 0 for Markoff graphs with $p \rightarrow \infty$. 
It is easier to compute a different measure of expansion, namely the next-largest eigenvalue of the adjacency matrix of $G$. For a $d$-regular graph, the largest eigenvalue is $d$ and we denote the next-largest absolute value among the eigenvalues by $\lambda$. 
The \emph{Cheeger inequality} for $d$-regular graphs (see \cite{AM}, \cite{D}) states that
\begin{equation} \label{eqn:cheegerineq}
\frac{1}{2}(d-\lambda) \leq h(G) \leq \sqrt{2d(d-\lambda)}
\end{equation}
In particular, $h \rightarrow 0$ if and only if $\lambda \rightarrow d$.
The constant function equal to 1 at every vertex is an eigenvector for the eigenvalue $d$. The multiplicity of this eigenvalue is the number of connected components of the graph. This is a practical way to check connectedness of Markoff graphs.

In the Markoff case, $d=3$ and, from the data in \cite{dci-lee}, $\lambda$ appears to converge to different values as $p \rightarrow \infty$ along the subsequences of primes congruent to $1 \bmod 4$ or $3 \bmod 4$. In both cases, $\lambda$ seems to remain bounded away from 3, in which case $h$ must remain bounded away from 0.
Once the number of vertices $n = p^2 \pm 3p$ is large enough, the inequality that would follow from (\ref{eqn:lt-corollary}) and (\ref{eqn:cheegerineq}), namely
\[
\frac{1}{2}(3-\lambda) \leq \frac{2 \sqrt{2n} }{n/3 - 2\sqrt{2n} }
\]
must therefore fail, and then the Markoff graph mod $p$ cannot be planar. 

For example, consider once again $p=19$, the first instance where neither Theorem~\ref{thm:mod4} nor Theorem~\ref{thm:7} applies. The number of vertices in this case is $n=p^2 -3p = 304$, so that
\begin{equation} \label{eqn:948}
\frac{2\sqrt{2n} }{n/3 - 2\sqrt{2n} } = 0.948\ldots
\end{equation}
%0.94804282789334239140519231532285285089
It is feasible to compute all the eigenvalues on a personal computer equipped with Pari \cite{pari}, and the next largest in modulus is approximately
$\lambda = 2.873\ldots$
%2.8731721565992043630806653590072910463
This gives $(3-\lambda)/2 = 0.0634\ldots$ which is well below (\ref{eqn:948}). Thus the spectral method does not apply to $p=19$.
%0.063413921700397818459667320496354476830
Assuming that a similar spectral gap persists for larger primes congruent to 3 mod 4, the comparison would become favourable to deducing non-planarity once $p \geq 163$, at which point $2\sqrt{2n} \div (n/3 - 2\sqrt{2n}) < 0.06$. 
%values of 2sqrt(2n)/(n/3 - 2sqrt(2n))
%[103, 0.091236019325815352446251268917514282945] 
%[107, 0.087473471196692545869268214039447221918]
%[127, 0.072520205822339423997779489230359005577]
%[131, 0.070122797176384841623632588144852041008]
%[139, 0.065774042154987088159061155805072433263]
%[151, 0.060176230534645614762755081617403387283]
%[163, 0.055456545396321861456617937781000627961]
%[167, 0.054043651590525313234907474560755223428]
%[179, 0.050206275865830180158387699200751470370]
%[191, 0.046877733152902035583219254095144742459]
%[199, 0.044893525553114154933189307193764413534]

\section{Examples} \label{sec:examples}

The first non-planar Markoff graph occurs for $p=5$. It is drawn (with crossings) in Figure~\ref{fig:5}. The number of squares is $s=3(p-1)/2=6$, the number of hexagons is $h=p+1=6$, and there are no self-edges. Therefore $V=40$ and $E=3V/2=60$. The construction of Theorem~\ref{thm:mod4} gives cycles of length $2p=10$. By inspection, there are no cycles of length 7, so one can take $g=8$ to improve the bounds on the Euler characteristic.
There are cycles of length 8, for instance traversing a hexagon and one of its adjacent squares, but these do not bound their own faces.  Nevertheless, taking $g=8$ gives $1/2 \leq -\chi$, which rounds to $1 \leq -\chi$.

For a non-orientable surface, formed from a sphere with $n$ cross-caps, the Euler characteristic is $\chi=2-n$, and the bound $1 \leq -\chi$ amounts to $n \geq 3$. For more on cross-caps, see \cite[p. 94--103]{CBG}.
To draw the Markoff graph mod 5 on a surface with $n=3$, imagine a cross-cap attached over each of the three hexagons in Figure~\ref{fig:5}.
For an orientable surface of genus $\gamma$, we would have $\chi = 2-2\gamma$, hence $\gamma \geq 2$, but this bound does not seem to be attainable.

\begin{figure}
\begin{tikzpicture}[scale=1.5]
\draw (0,0) node(333){333};
\draw (333)++(90:1) node(133){133};
\draw (133)++(30:1) node(130){130};
\draw (133)++(150:1) node(103){103};
\draw (130)++(90:1) node(120){120};
\draw (130)++(0:1) node(430){430};
\draw (120)++(0:1) node(420){420};
\draw (420)++(45:1) node(423){423};
\draw (103)++(90:1) node(102){102};
\draw (103)++(180:1) node(403){403};
\draw (102)++(180:1) node(402){402};
\draw (402)++(135:1) node(432){432};
\draw (120)++(150:1) node(122){122};
\draw (122)++(90:1) node(322){322};
\draw (333)++(210:1) node(313){313};
\draw (313)++(270:1) node(310){310};
\draw (313)++(150:1) node(013){013};
\draw (013)++(120:1) node(043){043};
\draw (013)++(210:1) node(012){012};
\draw (310)++(210:1) node(210){210};
\draw (210)++(150:1) node(212){212};
\draw (212)++(210:1) node(232){232};
\draw (012)++(120:1) node(042){042};
\draw (210)++(300:1) node(240){240};
\draw (310)++(300:1) node(340){340};
\draw (042)++(165:1) node(342){342};
\draw (240)++(255:1) node(243){243}; 
\draw (333)++(330:1) node(331){331};
\draw (331)++(270:1) node(301){301};
\draw (331)++(30:1) node(031){031};
\draw (031)++(330:1) node(021){021};
\draw (301)++(330:1) node(201){201};
\draw (201)++(30:1) node(221){221};
\draw (221)++(330:1) node(223){223};
\draw (301)++(240:1) node(304){304};
\draw (201)++(240:1) node(204){204};
\draw (031)++(60:1) node(034){034}; 
\draw (021)++(60:1) node(024){024};
\draw (024)++(15:1) node(324){324};
\draw (204)++(285:1) node(234){234}; 
\draw[very thick] (322)--(122);
\draw[very thick] (232)--(212);
\draw[very thick] (223)--(221);
\draw[very thick] (122)--(102)--(402)--(432)--(232);
\draw[very thick] (232)--(234)--(204)--(201)--(221)--(021)--(024)--(324);
\draw[very thick] (324)--(322);
\draw[very thick] (322)--(342);
\draw[very thick] (342)--(042)--(012)--(212)--(210)--(240)--(243);
\draw[very thick] (243)--(223);
\draw[very thick] (122)--(120)--(420)--(423);
\draw[very thick] (423)--(223);
\draw[dashed] (333)--(133)--(130)--(120);
\draw[dashed] (130)--(430)--(420);
\draw[dashed] (133)--(103)--(403)--(402);
\draw[dashed] (103)--(102);
\draw[dashed] (432)--(430);
\draw[dashed] (423)--(403);
\draw[dashed] (042)--(043)--(013)--(313)--(333)--(331)--(031)--(034)--(024);
\draw[dashed] (012)--(013);
\draw[dashed] (210)--(310)--(313);
\draw[dashed] (240)--(340)--(310);
\draw[dashed] (204)--(304)--(301)--(331);
\draw[dashed] (301)--(201);
\draw[dashed] (031)--(021);
\draw[dashed] (342)--(340);
\draw[dashed] (243)--(043);
\draw[dashed] (234)--(034);
\draw[dashed] (324)--(304);
\end{tikzpicture}
\caption{The Markoff graph mod 5. The 40 vertices are labelled $xyz$, where $x^2+y^2+z^2 = xyz \bmod 5$. The thicker edges illustrate the construction proving Theorem~\ref{thm:mod4} for $p=5$. One can choose $i=\pm 2$ and have $i^2 \equiv -1 \bmod 5$. Points of the form $(3,2,2)$ and $(1,2,2)$ play the role of $(2 \pm 2i, 2, 2)$ from Figure~\ref{fig:1mod4}}
\label{fig:5}
\end{figure}
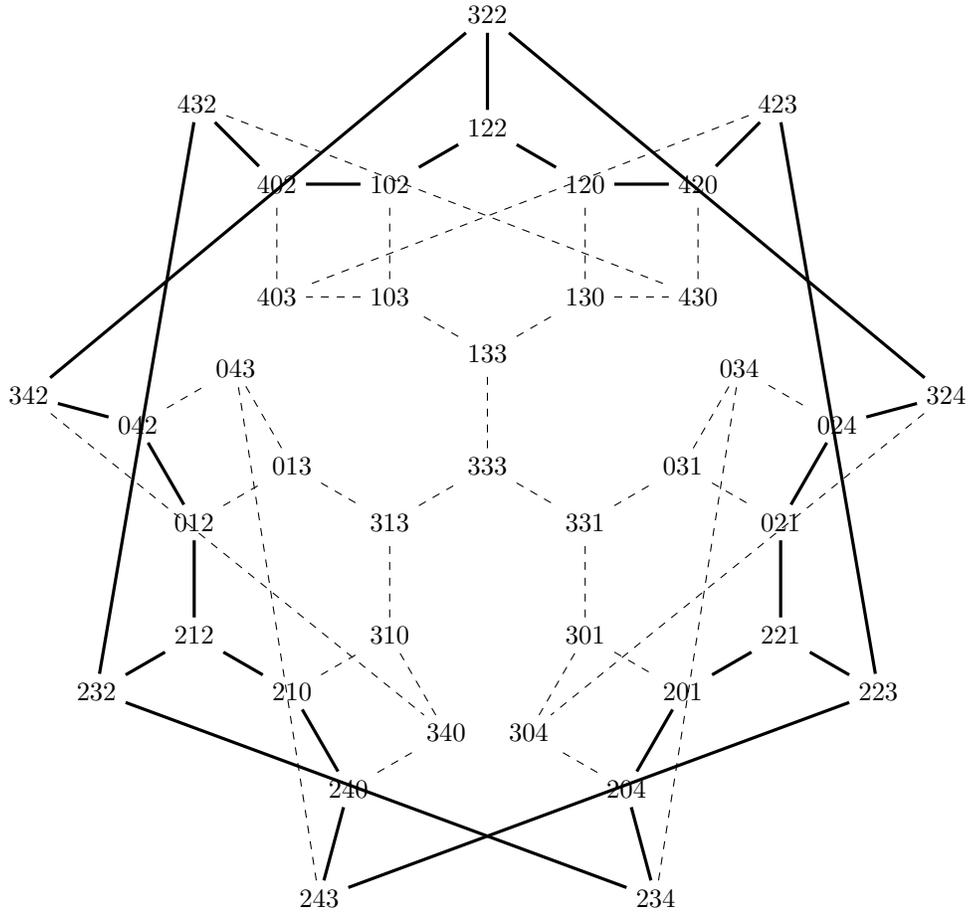

The example $p=11$ illustrates both Theorem~\ref{thm:7} and Proposition~\ref{prop:323121}. Carlitz's formula $p^2-3p$ (Lemma~\ref{lem:counts}) gives 88 vertices in total, which can be thought of as four signed copies of $22=16+6$. The 16 vertices in this partition form a tree following three steps from any of $(3,3,3)$ or its sign changes such as (8,8,3). The self-edges from Proposition~\ref{prop:323121} occur at $(3,4,6)$, as well as its permutations and sign changes. This limits the branching so that there are only 16 vertices per tree. The remaining 6 vertices in $4\times (16+6)$ come from four copies of Figure~\ref{fig:-7square}, one for each sign change. We have $\sqrt{-7}=\sqrt{4}=2 \bmod 11$, and $1/2 = 6$, so the special values $\frac{1}{2} (1 \pm \sqrt{-7})$ are $x=5$ and $1-x=7$. 

The cage for $p=11$ consists of triples with a coordinate equal to $\pm 5$. Indeed, since $z=0, \pm 2$ do not occur for $p \equiv 3 \bmod 4$, the possible values are $\pm z = 1, 3, 4, 5$. Both signs lead to the same order for $\zeta^2$, where $z = \zeta + \zeta^{-1}$. For $z = 5$, writing $i^2=-1$, we have
\begin{align*}
\zeta &= \frac{z + \sqrt{z^2-4}}{2} = \frac{5+\sqrt{-1}}{2} = 3(-1+2i) \\
\zeta^2 &= 6+8i \\
\zeta^4 &= 5+8i \\
\zeta^8 &= 5-8i = \overline{\zeta^4}
\end{align*}
From these values, it follows that $\zeta^{12}=1$ and no smaller exponent works. Thus $\zeta$ has order $12 = p+1$, which is as large as possible, putting $z=5$ in the cage.

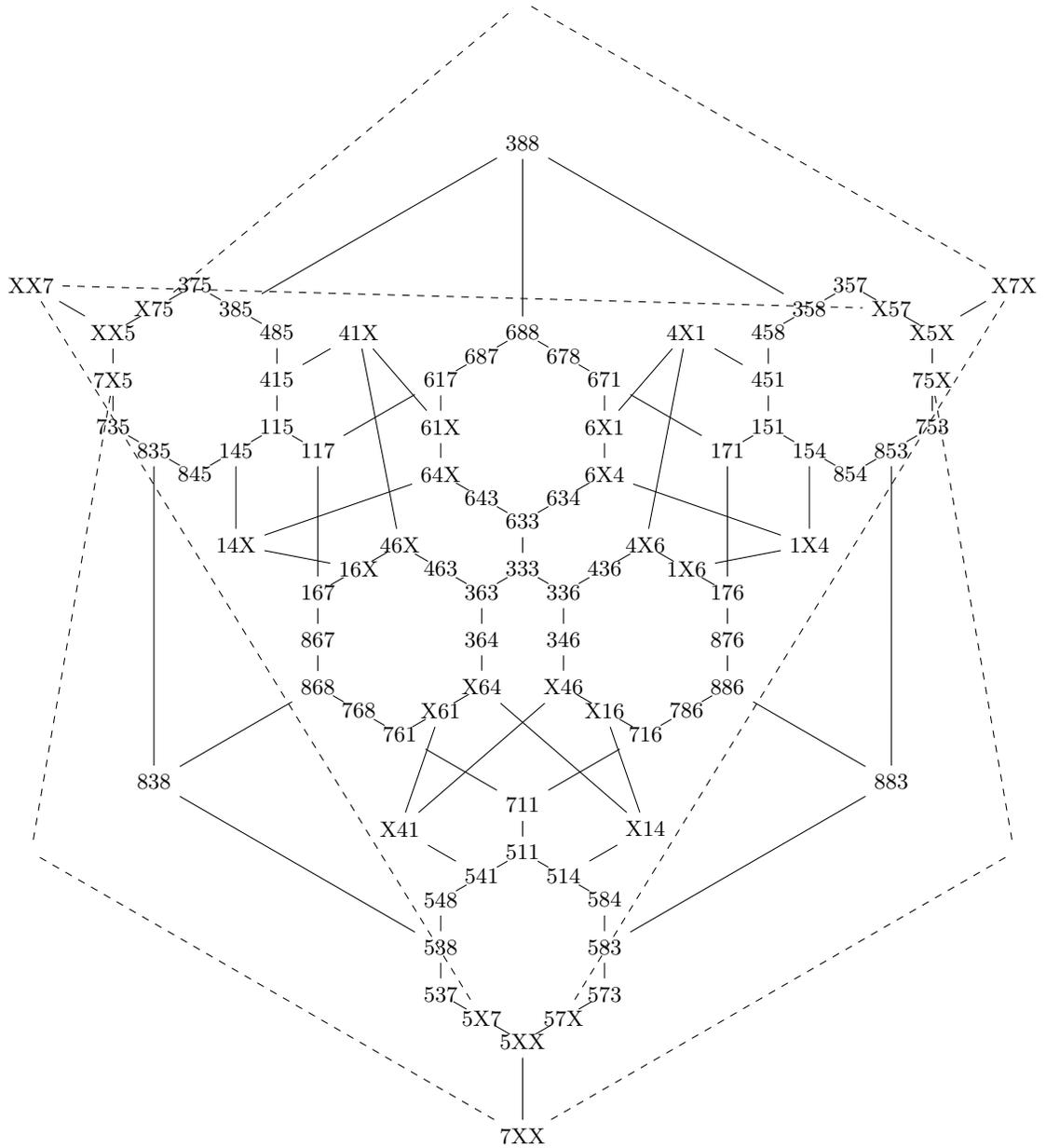
\begin{figure}[t]
\begin{tikzpicture}[scale=0.675]
\pgfmathsetmacro{\h}{5}
\pgfmathsetmacro{\a}{2}
\pgfmathsetmacro{\b}{2}
\pgfmathsetmacro{\c}{\h+5+\b}
\draw (0,0) node(333){\small{333}};
\draw (333)++(90:1) node(633){\small{633}};
\draw (333)++(210:1) node(363){\small{363}};
\draw (333)++(330:1) node(336){\small{336}};
\draw (633)++(150:1) node(643){\small{643}};
\draw (633)++(30:1) node(634){\small{634}};
\draw (634)++(30:1) node(6x4){\small{6X4}};
\draw (643)++(150:1) node(64x){\small{64X}};
\draw (6x4)++(90:1) node(6x1){\small{6X1}};
\draw (6x1)++(90:1) node(671){\small{671}};
\draw (64x)++(90:1) node(61x){\small{61X}};
\draw (61x)++(90:1) node(617){\small{617}};
\draw (671)++(150:1) node(678){\small{678}};
\draw (678)++(150:1) node(688){\small{688}};
\draw (617)++(30:1) node(687){\small{687}};
\draw (363)++(150:1) node(463){\small{463}};
\draw (463)++(150:1) node(46x){\small{46X}};
\draw (363)++(270:1) node(364){\small{364}};
\draw (364)++(270:1) node(x64){\small{X64}};
\draw (46x)++(210:1) node(16x){\small{16X}};
\draw (16x)++(210:1) node(167){\small{167}};
\draw (167)++(270:1) node(867){\small{867}};
\draw (867)++(270:1) node(868){\small{868}};
\draw (868)++(330:1) node(768){\small{768}};
\draw (768)++(330:1) node(761){\small{761}};
\draw (x64)++(210:1) node(x61){\small{X61}};
\draw (336)++(30:1) node(436){\small{436}};
\draw (436)++(30:1) node(4x6){\small{4X6}};
\draw (336)++(270:1) node(346){\small{346}};
\draw (346)++(270:1) node(x46){\small{X46}};
\draw (4x6)++(330:1) node(1x6){\small{1X6}};
\draw (1x6)++(330:1) node(176){\small{176}};
\draw (176)++(270:1) node(876){\small{876}};
\draw (876)++(270:1) node(886){\small{886}};
\draw (886)++(210:1) node(786){\small{786}};
\draw (x46)++(330:1) node(x16){\small{X16}};
\draw (x16)++(330:1) node(716){\small{716}};
\draw (688)++(90:4) node(388){\small{388}};
\draw (868)++(210:4) node(838){\small{838}};
\draw (886)++(330:4) node(883){\small{883}};
\draw (333)++(270:\h) node(711){\small{711}};
\draw (711)++(270:1) node(511){\small{511}};
\draw (333)++(150:\h) node(117){\small{117}};
\draw (117)++(150:1) node(115){\small{115}};
\draw (333)++(30:\h) node(171){\small{171}};
\draw (171)++(30:1) node(151){\small{151}};
\draw (115)++(90:1) node(415){\small{415}};
\draw (415)++(90:1) node(485){\small{485}};
\draw (115)++(210:1) node(145){\small{145}};
\draw (145)++(210:1) node(845){\small{845}};
\draw (485)++(150:1) node(385){\small{385}};
\draw (385)++(150:1) node(375){\small{375}};
\draw (845)++(150:1) node(835){\small{835}};
\draw (835)++(150:1) node(735){\small{735}};
\draw (735)++(90:1) node(7x5){\small{7X5}};
\draw (7x5)++(90:1) node(xx5){\small{XX5}};
\draw (xx5)++(30:1) node(x75){\small{X75}};
\draw (xx5)++(150:\b) node(xx7){\small{XX7}};
\draw (151)++(90:1) node(451){\small{451}};
\draw (451)++(90:1) node(458){\small{458}};
\draw (458)++(30:1) node(358){\small{358}};
\draw (358)++(30:1) node(357){\small{357}};
\draw (357)++(330:1) node(x57){\small{X57}};
\draw (x57)++(330:1) node(x5x){\small{X5X}};
\draw (x5x)++(270:1) node(75x){\small{75X}};
\draw (75x)++(270:1) node(753){\small{753}};
\draw (753)++(210:1) node(853){\small{853}};
\draw (853)++(210:1) node(854){\small{854}};
\draw (854)++(150:1) node(154){\small{154}};
\draw (x5x)++(30:\b) node(x7x){\small{X7X}};
\draw (511)++(330:1) node(514){\small{514}};
\draw (514)++(330:1) node(584){\small{584}};
\draw (584)++(270:1) node(583){\small{583}};
\draw (583)++(270:1) node(573){\small{573}};
\draw (573)++(210:1) node(57x){\small{57X}};
\draw (57x)++(210:1) node(5xx){\small{5XX}};
\draw (5xx)++(150:1) node(5x7){\small{5X7}};
\draw (5x7)++(150:1) node(537){\small{537}};
\draw (537)++(90:1) node(538){\small{538}};
\draw (538)++(90:1) node(548){\small{548}};
\draw (548)++(30:1) node(541){\small{541}};
\draw (5xx)++(270:\b) node(7xx){\small{7XX}};
\draw (415)++(30:\a) node(41x){\small{41X}};
\draw (145)++(270:\a) node(14x){\small{14X}};
\draw (541)++(150:\a) node(x41){\small{X41}};
\draw (514)++(30:\a) node(x14){\small{X14}};
\draw (154)++(270:\a) node(1x4){\small{1X4}};
\draw (451)++(150:\a) node(4x1){\small{4X1}};
\draw (385)--(388)--(358);
\draw (835)--(838)--(538);
\draw (583)--(883)--(853);
\draw (167)--(117)--(617);
\draw (671)--(171)--(176);
\draw (716)--(711)--(761);
\draw (388)--(688);
\draw (838)--(868);
\draw (883)--(886);
\draw[dashed] (xx7)--(5x7);
\draw[dashed] (xx7)--(x57);
\draw[dashed] (x7x)--(57x);
\draw (333)++(90:\c) node(c88){};
\draw (333)++(330:\c) node(88c){};
\draw (333)++(210:\c) node(8c8){};
\draw[dashed] (x75)--(c88)--(x7x);
\draw[dashed] (75x)--(88c)--(7xx);
\draw[dashed] (7x5)--(8c8)--(7xx);
\draw (145)--(14x)--(16x);
\draw (14x)--(64x);
\draw (415)--(41x)--(61x);
\draw (41x)--(46x);
\draw (451)--(4x1)--(6x1);
\draw (4x1)--(4x6);
\draw (154)--(1x4)--(1x6);
\draw (1x4)--(6x4);
\draw (514)--(x14)--(x16);
\draw (x14)--(x64);
\draw (541)--(x41)--(x61);
\draw (x41)--(x46);
\draw (688)--(678)--(671)--(6x1)--(6x4)--(634)--(633)--(643)--(64x)--(61x)--(617)--(687)--(688);
\draw (868)--(867)--(167)--(16x)--(46x)--(463)--(363)--(364)--(x64)--(x61)--(761)--(768)--(868);
\draw (886)--(786)--(716)--(x16)--(x46)--(346)--(336)--(436)--(4x6)--(1x6)--(176)--(876)--(886);
\draw (633)--(333)--(363);
\draw (x5x)--(75x)--(753)--(853)--(854)--(154)--(151)--(451)--(458)--(358)--(357)--(x57)--(x5x);
\draw (511)--(514)--(584)--(583)--(573)--(57x)--(5xx)--(5x7)--(537)--(538)--(548)--(541)--(511);
\draw (115)--(145)--(845)--(835)--(735)--(7x5)--(xx5)--(x75)--(375)--(385)--(485)--(415)--(115);
\draw (333)--(336);
\draw (115)--(117);
\draw (xx5)--(xx7);
\draw (x5x)--(x7x);
\draw (511)--(711);
\draw (5xx)--(7xx);
\draw (171)--(151);
\end{tikzpicture}
\caption{The Markoff graph mod 11. Labels $xyz$ abbreviate $(x,y,z)$, and $X$ denotes $10=-1 \bmod 11$. The dashed edges outline a copy of Figure~\ref{fig:-7square}. The cage consists of triples with some coordinate equal to 5 or 6. Self-edges occur at permutations and sign changes of (3,4,6), for instance (8,7,6) or (3,7,5).}
\label{fig:11}
\end{figure}

The first case not covered by either Theorem~\ref{thm:mod4} or Theorem~\ref{thm:7} is $p=19$, since $-7 \equiv 2^2 \bmod 11$ and the other primes $5 \leq p \leq 17$ are congruent to $1 \bmod 4$.
Figure~\ref{fig:19} exhibits a copy of $K_{3,3}$ showing that the Markoff graph mod 19 is not planar, as we already know from (\ref{eqn:7mod12}).
The various paths in Figure~\ref{fig:19} were found by trial and error after drawing part of the graph. 

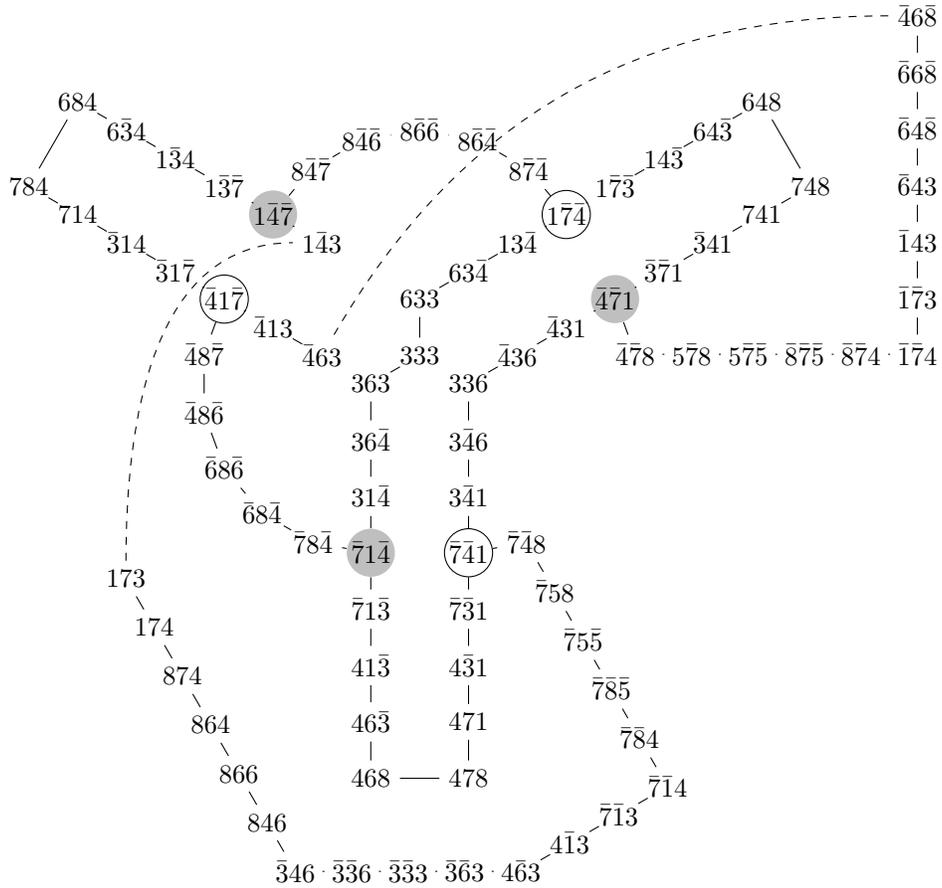
\begin{figure}[t]
\center
\begin{tikzpicture}[scale=0.75]
\draw (0,0) node(333){333};
\draw (90:1) node(633){633};
\draw (210:1) node(363){363};
\draw (330:1) node(336){336};
\draw (633)++(30:1) node(63-4){$63\bar{4}$};
\draw (633)++(150:1) node(6-43){};%{$6\bar{4}3$};
\draw (363)++(270:1) node(36-4){$36\bar{4}$};
\draw (363)++(150:1) node(-463){$\bar{4}63$};
\draw (336)++(270:1) node(3-46){$3\bar{4}6$};
\draw (336)++(30:1) node(-436){$\bar{4}36$};
\draw (3-46)++(270:1) node(3-41){$3\bar{4}1$};
\draw (3-41)++(270:1) node[circle,draw,inner sep=0.5](-7-41){$\bar{7}\bar{4}1$};
\draw (-436)++(30:1) node(-431){$\bar{4}31$};
\draw (-431)++(30:1) node[circle,fill=lightgray, inner sep=0.5](-4-71){$\bar{4}\bar{7}1$};
\draw (63-4)++(30:1) node(13-4){$13\bar{4}$};
\draw (13-4)++(30:1) node[circle,draw,inner sep=0.5](1-7-4){$1\bar{7}\bar{4}$};
\draw (36-4)++(270:1) node(31-4){$31\bar{4}$};
\draw (31-4)++(270:1) node[circle,fill=lightgray, inner sep=0.5](-71-4){$\bar{7}1\bar{4}$};
\draw (-463)++(150:1) node(-413){$\bar{4}13$};
\draw (-413)++(150:1) node[circle,draw,inner sep=0.5](-41-7){$\bar{4}1\bar{7}$};
\draw (6-43)++(150:1) node(1-43){$1\bar{4}3$};
\draw (1-43)++(150:1) node[circle,fill=lightgray, inner sep=0.5](1-4-7){$1\bar{4}\bar{7}$};
\draw (633)++(130:3) node(8-4-7){$8\bar{4}\bar{7}$};
\draw (633)++(110:3) node(8-4-6){$8\bar{4}\bar{6}$};
\draw (633)++(90:3) node(8-6-6){$8\bar{6}\bar{6}$};
\draw (633)++(70:3) node(8-6-4){$8\bar{6}\bar{4}$};
\draw (633)++(50:3) node(8-7-4){$8\bar{7}\bar{4}$};
\draw (363)++(170:3) node(-48-7){$\bar{4}8\bar{7}$};
\draw (363)++(190:3) node(-48-6){$\bar{4}8\bar{6}$};
\draw (363)++(210:3) node(-68-6){$\bar{6}8\bar{6}$};
\draw (363)++(230:3) node(-68-4){$\bar{6}8\bar{4}$};
\draw (363)++(250:3) node(-78-4){$\bar{7}8\bar{4}$};
\draw (-4-71)++(30:1) node(-3-71){$\bar{3}\bar{7}1$};
\draw (1-7-4)++(30:1) node(1-7-3){$1\bar{7}\bar{3}$};
\draw (-3-71)++(30:1) node(-341){$\bar{3}41$};
\draw (1-7-3)++(30:1) node(14-3){$14\bar{3}$};
\draw (-341)++(30:1) node(741){741};
\draw (14-3)++(30:1) node(64-3){$64\bar{3}$};
\draw (741)++(30:1) node(748){748};
\draw (64-3)++(30:1) node(648){648};
\draw (-41-7)++(150:1) node(-31-7){$\bar{3}1\bar{7}$};
\draw (1-4-7)++(150:1) node(1-3-7){$1\bar{3}\bar{7}$};
\draw (-31-7)++(150:1) node(-314){$\bar{3}14$};
\draw (1-3-7)++(150:1) node(1-34){$1\bar{3}4$};
\draw (-314)++(150:1) node(714){714};
\draw (1-34)++(150:1) node(6-34){$6\bar{3}4$};
\draw (714)++(150:1) node(784){784};
\draw (6-34)++(150:1) node(684){684};
\draw (-71-4)++(270:1) node(-71-3){$\bar{7}1\bar{3}$};
\draw (-7-41)++(270:1) node(-7-31){$\bar{7}\bar{3}1$};
\draw (-71-3)++(270:1) node(41-3){$41\bar{3}$};
\draw (-7-31)++(270:1) node(4-31){$4\bar{3}1$};
\draw (41-3)++(270:1) node(46-3){$46\bar{3}$};
\draw (4-31)++(270:1) node(471){$471$};
\draw (46-3)++(270:1) node(468){468};
\draw (471)++(270:1) node(478){478};
\draw (336)++(290:3) node(-7-48){$\bar{7}\bar{4}8$};
\draw (336)++(10:3) node(-4-78){$\bar{4}\bar{7}8$};
\draw (-4-78)++(0:1) node(5-78){$5\bar{7}8$};
\draw (5-78)++(0:1) node(5-7-5){$5\bar{7}\bar{5}$};
\draw (5-7-5)++(0:1) node(-8-7-5){$\bar{8}\bar{7}\bar{5}$};
\draw (-8-7-5)++(0:1) node(-8-74){$\bar{8}\bar{7}4$};
\draw (-8-74)++(0:1) node(-1-74){$\bar{1}\bar{7}4$};
%try right-angle turn
\draw (-1-74)++(90:1) node(-1-73){$\bar{1}\bar{7}3$};
\draw (-1-73)++(90:1) node(-143){$\bar{1}43$};
\draw (-143)++(90:1) node(-643){$\bar{6}43$};
%guess angle 300
\draw (-7-48)++(300:1) node(-758){$\bar{7}58$};
\draw (-758)++(300:1) node(-75-5){$\bar{7}5\bar{5}$};
\draw (-75-5)++(300:1) node(-7-8-5){$\bar{7}\bar{8}\bar{5}$};
\draw (-7-8-5)++(300:1) node(-7-84){$\bar{7}\bar{8}4$};
\draw (-7-84)++(300:1) node(-7-14){$\bar{7}\bar{1}4$};
%try right-angle turn from guessed 300
\draw (-7-14)++(210:1) node(-7-13){$\bar{7}\bar{1}3$};
\draw (-7-13)++(210:1) node(4-13){$4\bar{1}3$};
\draw (4-13)++(210:1) node(4-63){$4\bar{6}3$};
%now the symmetry is broken
\draw (-643)++(90:1) node(-64-8){$\bar{6}4\bar{8}$};
\draw (-64-8)++(90:1) node(-66-8){$\bar{6}6\bar{8}$};
\draw (-66-8)++(90:1) node(-46-8){$\bar{4}6\bar{8}$};
\draw (4-63)++(180:1) node(-3-63){$\bar{3}\bar{6}3$};
\draw (-3-63)++(180:1) node(-3-33){$\bar{3}\bar{3}3$};
\draw (-3-33)++(180:1) node(-3-36){$\bar{3}\bar{3}6$};
\draw (-3-36)++(180:1) node(-346){$\bar{3}46$};
%try angle 120
\draw (-346)++(120:1) node(846){846};
\draw (846)++(120:1) node(866){866};
\draw (866)++(120:1) node(864){864};
\draw (864)++(120:1) node(874){874};
\draw (874)++(120:1) node(174){174};
\draw (174)++(120:1) node(173){173};
\draw[dashed] (-46-8) to[bend right] (-463);
\draw[dashed] (173) to[out=90,in=180] (1-43);
%y shape
\draw (-71-4)--(-71-3)--(41-3)--(46-3)--(468)--(478)--(471)--(4-31)--(-7-31)--(-7-41);
\draw (1-4-7)--(1-3-7)--(1-34)--(6-34)--(684)--(784)--(714)--(-314)--(-31-7)--(-41-7);
\draw (-4-71)--(-3-71)--(-341)--(741)--(748)--(648)--(64-3)--(14-3)--(1-7-3)--(1-7-4);
%arcs
\draw (1-4-7)--(8-4-7)--(8-4-6)--(8-6-6)--(8-6-4)--(8-7-4)--(1-7-4);
\draw (-41-7)--(-48-7)--(-48-6)--(-68-6)--(-68-4)--(-78-4)--(-71-4);
%middle paths
\draw (-71-4)--(31-4)--(36-4)--(363)--(333)--(633)--(63-4)--(13-4)--(1-7-4);
\draw (-4-71)--(-431)--(-436)--(336)--(3-46)--(3-41)--(-7-41);
%finish path from (-4-71) to (-41-7)
\draw (-4-71)--(-4-78)--(5-78)--(5-7-5)--(-8-7-5)--(-8-74)--(-1-74);
\draw (-1-74)--(-1-73)--(-143)--(-643)--(-64-8)--(-66-8)--(-46-8);
\draw (-463)--(-413)--(-41-7);
%path from (1-4-7) to (-7-41)
\draw (1-4-7)--(1-43);
\draw (173)--(174)--(874)--(864)--(866)--(846)--(-346)--(-3-36)--(-3-33);
\draw (-3-33)--(-3-63)--(4-63);
\draw (4-63)--(4-13)--(-7-13)--(-7-14)--(-7-84)--(-7-8-5)--(-75-5)--(-758)--(-7-48)--(-7-41);
\end{tikzpicture}
\caption{
The Markoff graph modulo 19 contains a subdivision of the complete bipartite graph joining the even permutations of $(1,-4,-7)$, circled in grey, to the odd permutations, circled in white. Triples $(x,y,z)$ are abbreviated as $xyz$, and $\bar{x}$ denotes $-x \bmod 19$. 
}
 \label{fig:19}
\end{figure}

\section{Conclusion} \label{sec:conc}

We have shown that the Markoff graph mod 7 is the last of its kind: for $p \neq 2, 3, 7$, these graphs are not planar. Moreover, the Euler characteristic of a surface in which they can be embedded is, in absolute value, at least roughly $p^2/2$ as $p \rightarrow \infty$. 
This non-planarity is consistent with the conjecture that the Markoff graphs form an expander family as $p \rightarrow \infty$.
For $p$ in various arithmetic progressions, non-planarity can be seen by explicit constructions involving $\sqrt{-1}$ or $\sqrt{-7}$.
The general argument is based on variations of the classical Lemma~\ref{lem:planar}. These can be thought of either as an upper bound for the number of vertices $V$ of a 3-regular graph embedded in a surface of given Euler characteristic $\chi$, or as a bound for $\chi$ given $V$. The methods are applicable to other examples beyond the Markoff graphs mod $p$, as long as we have some knowledge of the short cycles.

Lemma~\ref{lem:planar} has some interesting sharp cases for graphs of higher degree. If every vertex has degree $d$, then $E = dV/2$. We can always take $g=3$ for graphs without repeated edges. For a planar graph, Euler's formula then implies $E \leq g(V-2)/(g-2)$, or
\[
V \geq \frac{12}{6-d}
\]
This is achieved for $d=3$ by the tetrahedron with $V=4$; for $d=4$ by the octahedron with $V=6$; and for $d=5$ by the icosahedron with $V=12$. 
All of the Markoff graphs have the symmetries of a tetrahedron, both rotations and reflections. These act by the four sign changes such as $(x,y,z) \mapsto (-x,-y,z)$, together with permutations of the coordinates. 
The Markoff moves themselves give further symmetries, which are closely related to the projective linear group $\PGL(2,p)$. Especially for small primes such as $p=5,7,11$, there might be good ways to draw the Markoff graphs on Platonic solids with cross-caps or handles attached. 

For example, Figure~\ref{fig:mod7} could be folded into a tetrahedron with $(3,3,3)$ or one of its sign changes as the center of each face, or as vertices. One can also see the outermost hexagon in Figure~\ref{fig:mod7} or \ref{fig:11} as a cross-section of a cube.
Another natural home for the Markoff graphs mod $p$, in view of $\PGL(2,p)$, would be the hyperbolic surfaces defined from related subgroups of $\PGL(2,\R)$, or perhaps 3-dimensional hyperbolic models with respect to $\PGL(2,\C)$, or the non-congruence modular curves from \cite{C}.
It would be interesting to compare Theorem~\ref{thm:euler-char} with embeddings of minimal complexity, and find the optimal $c$ for which there are sequences of embeddings with $-\chi=(c+o(1))p^2$ as $p \rightarrow \infty$.

Finally, we comment on another scaling of the Markoff equation, which is the usual form over the integers:
%\[
$
x^2+y^2+z^2=3xyz
$.
%\]
Multiplying each variable by 3 transforms this to the Markoff graphs studied here. The moves are scaled in a compatible way, for instance 
\[
x \mapsto 3yz-x = \frac{(3y)(3z)-3x}{3}. 
\]
For $p \neq 3$, the scaling is invertible and either equation leads to the same Markoff graph mod $p$. For $p=3$, the $3xyz$-version of the Markoff equation reduces to $x^2+y^2+z^2=0$, where the cubic term has disappeared. Whereas $(0,0,0)$ is the only solution to $x^2+y^2+z^2=xyz$, this form has 8 other solutions $(\pm 1, \pm 1, \pm 1)$. The moves collapse to sign changes: $x \mapsto 3yz-x = -x \bmod 3$. In this way, the rescaled Markoff graph mod 3 can be drawn as a cube, giving a more interesting planar example than the empty graph we dismissed earlier.
\begin{center}
\begin{tikzpicture}[scale=1.2]
\draw (0,0) node(+++){$111$};
\draw (1,0) node(-++){$211$};
\draw (0,1) node(+-+){$121$};
\draw (1,1) node(--+){$221$};
\draw (--+)++(45:1) node(---){$222$};
\draw (+-+)++(135:1) node(+--){$122$};
\draw (+++)++(225:1) node(++-){$112$};
\draw (-++)++(315:1) node(-+-){$212$};
\draw (+++)--(+-+)--(--+)--(--+)--(---)--(+--)--(++-)--(-+-)--(-++);
\draw (---)--(-+-);
\draw (--+)--(-++)--(+++);
\draw (+--)--(+-+);
\draw (++-)--(+++);
\end{tikzpicture}
\end{center}

\section*{Acknowledgements}

We thank Peter Sarnak, Elena Fuchs, Michael Magee, Eva Bayer-Fluckiger, Martin Stoller, and Maryna Viazovska for their encouragement in this project.
We thank Will Sawin for suggesting another approach in case the graph is disconnected (apply the arguments of Section~\ref{sec:euler} to each component separately and conclude at least one of them is non-planar, though not necessarily the component containing the cage), and Will Chen for suggesting that the surfaces from \cite{C} might give embeddings with $-\chi$ asymptotically as small as possible.
We are very grateful to the anonymous referees for their feedback, including the possibility of $K_5$ occurring as a graph minor.
Many thanks also to Tim Browning for advice on the manuscript.

\end{document}